\title[Integrals for finite tensor categories]
{Integrals for finite tensor categories}
\author[K.~Shimizu]{Kenichi Shimizu}
\email{kshimizu@shibaura-it.ac.jp}
\address{Department of Mathematical Sciences \\
  Shibaura Institute of Technology \\
  307 Fukasaku, Minuma-ku, Saitama-shi, Saitama 337-8570, Japan.}
\thanks{The author is supported by JSPS KAKENHI Grant Number 16K17568}
\date{}
\numberwithin{equation}{section}
\newtheorem{counter}{}[section]
\theoremstyle{definition}
\newtheorem{definition}         [counter]{Definition}
\newtheorem*{notation*}         {Notation}
\theoremstyle{plain}
\newtheorem{lemma}              [counter]{Lemma}
\newtheorem{theorem}            [counter]{Theorem}
\newtheorem{corollary}          [counter]{Corollary}
\newtheorem{question}           [counter]{Question}
\newtheorem*{theorem*}          {Theorem}
\theoremstyle{remark}
\newtheorem{remark}             [counter]{Remark}
\newtheorem{example}            [counter]{Example}
\newcommand{\id}{\mathsf{id}}
\newcommand{\eval}{\mathsf{ev}}
\newcommand{\coev}{\mathsf{coev}}
\newcommand{\proj}{{\rm pr}}
\newcommand{\op}{\mathrm{op}}
\newcommand{\rev}{\mathrm{rev}}
\newcommand{\unitobj}{\mathbbm{1}}
\newcommand{\dinto}{\xrightarrow{\ .. \ }}
\DeclareMathOperator{\Hom}{\mathrm{Hom}}
\DeclareMathOperator{\End}{\mathrm{End}}
\DeclareMathOperator{\REX}{\mathrm{Rex}}
\DeclareMathOperator{\Nat}{\mathrm{Nat}}
\DeclareMathOperator{\rank}{\mathrm{rank}}
\newcommand{\lmod}[1]{{#1}\text{-{\sf mod}}}
\newcommand{\lcomod}[1]{{#1}\text{-{\sf comod}}}
\begin{document}

\begin{abstract}
  We introduce the notions of categorical integrals and categorical cointegrals of a finite tensor category $\mathcal{C}$ by using a certain adjunction between $\mathcal{C}$ and its Drinfeld center $\mathcal{Z}(\mathcal{C})$. These notions can be identified with integrals and cointegrals of a finite-dimensional Hopf algebra $H$ if $\mathcal{C}$ is the representation category of $H$. We generalize basic results on integrals and cointegrals of a finite-dimensional Hopf algebra (such as the existence, the uniqueness, and the Maschke theorem) to finite tensor categories. Motivated by results of Lorenz, we also investigate relations between categorical integrals and morphisms factoring through projective objects. Finally, we extend the $n$-th indicator of a finite-dimensional Hopf algebra introduced by Kashina, Montgomery and Ng to finite tensor categories.
\end{abstract}

\maketitle

\section{Introduction}

A locally compact group $G$ has a left translation invariant measure $\mu$, called a Haar measure on $G$. The measure induces a linear functional $\lambda(f) = \int_G f(x) d \mu(x)$ on the space $R(G)$ of representative functions on $G$. The functional $\lambda$ inherits an invariance property from $\mu$. One can observe that the invariance property of $\lambda$ can be rephrased in terms of the coalgebra structure of $R(G)$.

Hopf algebras arise from many contexts of mathematics. One may think of a Hopf algebra as the notion of a group in the `non-commutative' geometry. From this point of view, a cointegral of a Hopf algebra is an algebraic abstraction of the functional $\lambda$ considered in the above. Formally speaking, a {\em left cointegral} of a Hopf algebra $H$ over a field $k$ is a linear map $\lambda: H \to k$ satisfying $h_{(1)} \lambda(h_{(2)}) = \lambda(h) 1_H$ for all $h \in H$, where we have used the Sweedler notation. Dually, a {\em left integral} of $H$ is an element $\Lambda \in H$ satisfying $h \Lambda = \varepsilon(h) \Lambda$ for all $h \in H$, where $\varepsilon: H \to k$ is the counit of $H$. Integrals and cointegrals are one of the central subject in the theory of Hopf algebras.

If $H$ is finite-dimensional, then it always has a non-zero left integral $\Lambda \in H$ and a non-zero left cointegral $\lambda: H \to k$ such that $\lambda(\Lambda) = 1$. This fact is crucial in the study of finite-dimensional Hopf algebras. One of the most important applications is Radford's formula on the fourth power of the antipode: There is a unique algebra map $\upalpha: H \to k$, which we call the {\em modular function} on $H$, such that
\begin{equation}
  \label{eq:intro-def-alpha}
  \Lambda h = \upalpha(h) \Lambda
\end{equation}
for all $h \in H$ ({\it cf}. the modular function on a locally compact group). Let $S$ be the antipode of $H$. Radford \cite{MR0407069} proved that $S^4$ is equal to
\begin{equation}
  \label{eq:intro-S4}
  h \mapsto \upalpha(S(h_{(1)})) \cdot h_{(2)} \cdot \upalpha(h_{(3)})
\end{equation}
modulo an inner automorphism by a certain grouplike element of $H$.

Recently, we investigate tensor categories with motivation coming from various areas of mathematics and mathematical physics. The representation category of a Hopf algebra is one of familiar examples of tensor categories. Focusing on applications of tensor categories to, for example, topological quantum field theories and conformal field theories, it is important to understand results on Hopf algebras in the context of tensor categories.

One of the significant results in this direction is a categorical analogue of the above Radford's formula. Let $\mathcal{C}$ be a finite tensor category in the sense of \cite{MR2119143}. Etingof, Nikshych and Ostrik \cite{MR2097289} proved that there is a special invertible object $\upalpha \in \mathcal{C}$ and an isomorphism
\begin{equation}
  \label{eq:intro-categroical-S4}
  X^{****} \cong \upalpha^* \otimes X \otimes \upalpha
  \quad (X \in \mathcal{C})
\end{equation}
of tensor functors. If $\mathcal{C} = \lmod{H}$ for some finite-dimensional Hopf algebra $H$, then the object $\upalpha$ corresponds to the modular function on $H$. The left-hand side and the right-hand side of \eqref{eq:intro-categroical-S4} correspond to $S^4$ and the map~\eqref{eq:intro-S4}, respectively.

To explain the aim of this paper, we recall that the modular function on a Hopf algebra is defined by using integrals as in \eqref{eq:intro-def-alpha}. On the other hand, the object $\upalpha$ of a finite tensor category is not defined in such a way: Integrals have not been defined for finite tensor categories in the first place. As many results on Hopf algebras are written in terms of integrals, it is natural to raise the following question:

\begin{question}
  What is a categorical analogue of the notion of integrals?
\end{question}

This question has been answered in \cite{2015arXiv150401178S} for {\em unimodular} finite tensor categories. In this paper, we introduce the notions of {\em categorical integrals} and {\em categorical cointegrals} of an {\em arbitrary} finite tensor category $\mathcal{C}$. As in \cite{2015arXiv150401178S}, our approach uses a right adjoint $\mathsf{R}$ of the forgetful functor $\mathsf{U}: \mathcal{Z}(\mathcal{C}) \to \mathcal{C}$ from the Drinfeld center of $\mathcal{C}$. By Tannaka reconstruction, we endow the functor $\mathsf{Z} := \mathsf{U R}$ with a structure of a Hopf comonad on $\mathcal{C}$. If $\mathcal{C} = \lmod{H}$ for some finite-dimensional Hopf algebra $H$, then the structure maps of $H$ appear as the structure morphisms of $\mathsf{Z}$. Based on this observation, categorical integrals and categorical cointegrals are defined by certain formulas written by the structure morphisms of $\mathsf{Z}$. To demonstrate that our definition is `correct', we extend some results on finite-dimensional Hopf algebras to finite tensor categories by using categorical (co)integrals; see below.

\subsection*{Organization of this paper}

Throughout this paper, we work over a fixed algebraically closed field $k$ of arbitrary characteristic. The present paper is organized as follows: In Section~\ref{sec:prelim}, we recall basic categorical notions mainly from \cite{MR1712872} and \cite{MR3242743}. Especially, the notion of an end of a functor (Subsection~\ref{subsec:ends}) is important throughout this paper.

In Section~\ref{sec:rem-adj-alg}, we collect basic results on the {\em adjoint algebra} (Definition~\ref{def:adj-alg}) and the {\em central Hopf comonad} (Definition~\ref{def:Hopf-comonad-Z}) on a finite tensor category. Let $\mathcal{C}$ be a finite tensor category over $k$. Then the forgetful functor $\mathsf{U}: \mathcal{Z}(\mathcal{C}) \to \mathcal{C}$ has a right adjoint functor, say $\mathsf{R}$. The adjoint algebra is defined by $\mathsf{A} = \mathsf{U R}(\unitobj)$. By the discussion of Section~\ref{sec:prelim}, we see that the end
\begin{equation*}
  \mathsf{Z}(V) = \int_{X \in \mathcal{C}} X \otimes V \otimes X^*
\end{equation*}
exists for all $V \in \mathcal{C}$. The functor $V \mapsto \mathsf{Z}(V)$ has a structure of a Hopf comonad on $\mathcal{C}$, which we call the central Hopf comonad, such that the category of $\mathsf{Z}$-comodules can be identified with $\mathcal{Z}(\mathcal{C})$.  Under this identification, $\mathsf{R}$ is just the free $\mathsf{Z}$-comodule functor. This observation allows us to study the adjoint algebra by the central Hopf comonad. The results of this section are important for studying our categorical analogue of integrals in later sections. However, since the detail is technical, we omit to describe them here.

At the end of this section, we give a summary of notations introduced in this section for reader's convenience. These notations will be used throughout the rest of this paper. Finally, we observe that the structure morphisms of $\mathsf{Z}$,
\begin{equation}
  \label{eq:Z-structures}
  \begin{gathered}
    \mathsf{m}_{V,W}: \mathsf{Z}(V) \otimes \mathsf{Z}(W) \to \mathsf{Z}(V \otimes W),
    \quad \mathsf{u}: \unitobj \to \mathsf{Z}(\unitobj), \\
    \updelta_V: \mathsf{Z}(V) \to \mathsf{Z}^2(V),
    \quad \upepsilon_V: \mathsf{Z}(V) \to V,
  \end{gathered}
\end{equation}
closely relate to the structure morphisms of $H$ if $\mathcal{C} = \lmod{H}$ is the representation category of a finite-dimensional Hopf algebra $H$ (Example~\ref{ex:adj-Hopf-alg}).

In Section~\ref{sec:integ-ftc}, we first introduce the {\em modular object} $\upalpha \in \mathcal{C}$ (Definition~\ref{def:ftc-mod-ft}) by using the adjunction $\mathsf{U} \dashv \mathsf{R}$. We then introduce {\em categorical integrals} and {\em categorical cointegrals} of $\mathcal{C}$ (Definition~\ref{def:integ-FTCs}). A categorical integral of $\mathcal{C}$ is a morphism $\Lambda: \unitobj \to \mathsf{Z}(\upalpha)$ in $\mathcal{C}$ satisfying a certain condition written by the structure morphisms \eqref{eq:Z-structures} of $\mathsf{Z}$. A categorical cointegral is a morphism $\lambda: \unitobj \to \mathsf{Z}(\upalpha)$ in $\mathcal{C}$ satisfying a certain condition which is also written by \eqref{eq:Z-structures}. One can check that a categorical (co)integral is an ordinary (co)integral if $\mathcal{C} = \lmod{H}$ for some finite-dimensional Hopf algebra $H$ (Example~\ref{ex:Hopf-integ}).

After defining categorical (co)integrals, we give some results on them: We first prove that the space of the dimension of categorical (co)integrals is one-dimensional. Moreover, for any non-zero categorical cointegral $\lambda$, there is a unique categorical integral $\Lambda$ such that $\lambda \circ \Lambda = \id_{\unitobj}$ (Theorem~\ref{thm:integ-exist}). The proof of these results depends on the representation theory of the adjoint algebra given in Section~\ref{sec:rem-adj-alg}. The Maschke theorem is also established (Theorem~\ref{thm::Maschke}). Finally, we introduce the property CU of a finite tensor category (Definition~\ref{def:FTC-CU}). Since a finite-dimensional Hopf algebra $H$ is cosemisimple unimodular if and only if $\lmod{H}$ has the property CU (Theorem~\ref{thm:FTC-CU}), we conclude that the cosemisimple unimodularity is a gauge invariant of a finite-dimensional Hopf algebra, as has been proved in \cite{MR1939116}.

In Section~\ref{sec:integ-func}, we introduce the {\em integration functor} $I: \mathcal{C} \to \mathcal{C}$. This functor is a categorical analogue of the submodule $I(X) = \{ \Lambda x \mid x \in X \}$ of $X \in \lmod{H}$, where $\Lambda$ is a non-zero left integral of $H$. As in the case of Hopf algebras, the object $I(X)$ for $X \in \mathcal{C}$ is isomorphic to the direct sum of the tensor unit $\unitobj$. Generalizing a result of Lorenz \cite{MR1961567}, we show that there is an isomorphism
\begin{equation*}
  \Hom_{\mathcal{C}}(\unitobj, I(W \otimes V^*))
  \cong \{ f: V \to W \mid \text{$f$ factors through a projective object} \}
\end{equation*}
for all $V, W \in \mathcal{C}$ (Theorem~\ref{thm:projmorph}). We also prove that $\dim_k \Hom_{\mathcal{C}}(\unitobj, I(\mathsf{A}))$, where $\mathsf{A}$ is the adjoint algebra of $\mathcal{C}$, is equal to the rank of the Cartan matrix of $\mathcal{C}$ reduced modulo the characteristic of $k$ (see Theorem~\ref{thm:integ-higman} for the precise statement). This theorem extends the result of Lorenz \cite{MR1435369} to finite tensor categories.

In Section~\ref{sec:indicator-ftc}, we define the $n$-th indicator $\nu_n(\mathcal{C}) \in k$ for a positive integer $n$ by using categorical integrals and categorical cointegrals. Kashina, Montgomery and Ng introduced the $n$-th indicator $\nu_n(H) \in k$ of a finite-dimensional Hopf algebra $H$ and proved that $\nu_n$ is gauge invariant \cite{KMN09}. Let $\lambda$ be a non-zero left cointegral of $H$, and let $\Lambda$ be a non-zero integral of $H$ such that $\lambda(\Lambda) = 1$. As they proved in  \cite{KMN09}, then $\nu_n(H) = \lambda(\Lambda^{[n]})$, where $(-)^{[n]}$ means the $n$-th Sweedler power of an element of $H$. In a similar way as the $n$-th Frobenius-Schur endomorphism introduced in \cite{MR2381536}, we define the $n$-th Sweedler power $\Lambda^{[n]}$ of a categorical left integral $\Lambda$ of $\mathcal{C}$ (Definition~\ref{def:sw-pow-integ}). We then define $\nu_n(\mathcal{C}) \in k$ by imitating the above-mentioned formula of $\nu_n(H)$ by integrals (Definition~\ref{def:indicator-FTC}). It turns out that 
\begin{equation*}
  \nu_n(\lmod{H}) = \nu_n(H)
\end{equation*}
for all positive integers $n$ (Theorem~\ref{thm:indicator-FTC-Hopf}). We hope that this categorical description of the $n$-th indicator of $H$ give some new insights for future study of $\nu_n(H)$ and other gauge invariants of finite-dimensional Hopf algebras.

\subsection*{Acknowledgment}

The author is supported by JSPS KAKENHI Grant Number 16K17568.

\section{Preliminaries}
\label{sec:prelim}

\subsection{Monoidal categories}

For the basic theory on monoidal categories, we refer the reader to \cite{MR1712872} and \cite{MR3242743}. In view of Mac Lane's coherence theorem, we assume all monoidal categories to be strict. Given a monoidal category $\mathcal{C} = (\mathcal{C}, \otimes, \unitobj)$ with tensor product $\otimes$ and unit object $\unitobj \in \mathcal{C}$, we set
\begin{equation*}
  \mathcal{C}^{\op} = (\mathcal{C}^{\op}, \otimes, \unitobj)
  \text{\quad and \quad}
  \mathcal{C}^{\rev} = (\mathcal{C}, \otimes^{\rev}, \unitobj),
\end{equation*}
where $\otimes^{\rev}$ is the reversed tensor product $X \otimes^{\rev} Y = Y \otimes X$.

Let $L$ and $R$ be objects of a monoidal category $\mathcal{C}$, and let $\varepsilon: L \otimes R \to \unitobj$ and $\eta: \unitobj \to R \otimes L$ be morphisms in $\mathcal{C}$. Following \cite{MR3242743}, we say that $(L, \varepsilon, \eta)$ is a {\em left dual object} of $R$ and $(R, \varepsilon, \eta)$ is a {\em right dual object} of $L$ if the equations
\begin{equation*}
  (\varepsilon \otimes \id_L) \circ (\id_L \otimes \eta) = \id_L
  \quad \text{and} \quad
  (\id_R \otimes \varepsilon) \circ (\eta \otimes \id_R) = \id_R
\end{equation*}
hold. The monoidal category $\mathcal{C}$ is {\em rigid} if every object of $\mathcal{C}$ has a fixed left dual object and a fixed right dual object. If this is the case, we denote the left dual object and the right dual object of $X \in \mathcal{C}$ by
\begin{equation*}
  (X^*, \eval_X, \coev_X) \quad \text{and} \quad
  ({}^*\!X, \eval_X', \coev_X'),
\end{equation*}
respectively. The assignment $X \mapsto X^*$ extends to a monoidal equivalence between $\mathcal{C}^{\rev}$ and $\mathcal{C}^{\op}$, which we call the {\em left duality functor}. A quasi-inverse of this equivalence is given by $X \mapsto {}^* \! X$. We may assume that $X \mapsto X^*$ and $X \mapsto {}^* \! X$ are strict monoidal functors and mutually inverse to each other. Thus, in particular,
\begin{equation*}
  (X \otimes Y)^* = Y^* \otimes X^*,
  \quad \unitobj^* = \unitobj
  \quad \text{and}
  \quad {}^*(X^*) = X = ({}^* \! X)^*
\end{equation*}
for all objects $X$ and $Y$ of a rigid monoidal category.

\subsection{Hopf comonads}

Let $\mathcal{C}$ be a monoidal category. A {\em monoidal comonad} on $\mathcal{C}$ is a comonad $T = (T, \delta, \varepsilon)$ on $\mathcal{C}$ such that the functor $T: \mathcal{C} \to \mathcal{C}$ is a monoidal functor and $\delta$ and $\varepsilon$ are monoidal natural transformations. If $T$ is a monoidal comonad on $\mathcal{C}$ with the monoidal structure
\begin{equation*}
  \mu_{X,Y}: T(X) \otimes T(Y) \to T(X \otimes Y)
  \quad \text{and} \quad \eta: \unitobj \to T(\unitobj),
\end{equation*}
then the category $\lcomod{T}$ of $T$-comodules ($=$ the Eilenberg-Moore category of the comonad $T$) is a monoidal category with the tensor product given by
\begin{equation*}
  (M, \rho) \otimes (M', \rho') = (M \otimes M', \mu_{M,M'} \circ (\rho \otimes \rho'))
\end{equation*}
for $(M, \rho), (M', \rho') \in \lcomod{T}$ and the unit object $(\unitobj, \eta)$. We now assume that $\mathcal{C}$ is rigid. Then a {\em Hopf comonad} on $\mathcal{C}$ is a monoidal comonad $T$ on $\mathcal{C}$ such that the monoidal category $\lcomod{T}$ is rigid.

A monoidal comonad on $\mathcal{C}$ introduced in the above is precisely a bimonad on $\mathcal{C}^{\op}$ in the sense of \cite{MR2355605}. A Hopf comonad on $\mathcal{C}$ is just a Hopf monad on $\mathcal{C}^{\op}$ in the sense of \cite{MR2355605}. For the general theory of monoidal comonads and Hopf comonads, we refer the reader to \cite{MR2355605,MR2793022,MR2869176}, where bimonads and Hopf monads are investigated.

\subsection{Module categories}

Let $\mathcal{C}$ be a monoidal category. A {\em left $\mathcal{C}$-module category} \cite[Chapter 7]{MR3242743} is a category $\mathcal{M}$ endowed with a functor $\otimes: \mathcal{C} \times \mathcal{M} \to \mathcal{M}$, called the {\em action}, and natural isomorphisms
\begin{equation*}
  (X \otimes Y) \otimes M \cong X \otimes (Y \otimes M)
  \text{\quad and \quad}
  \unitobj \otimes M \cong M
  \quad (X, Y \in \mathcal{C}, M \in \mathcal{M})
\end{equation*}
satisfying certain axioms similar to those for a monoidal category. In a similar way, right $\mathcal{C}$-module categories and $\mathcal{C}$-bimodule categories are defined.

Let $\mathcal{M}$ be a left $\mathcal{C}$-module category, and let $A$ be an algebra in $\mathcal{C}$ ($=$ a monoid in $\mathcal{C}$ \cite{MR1712872}). Then the functor $M \mapsto A \otimes M$ ($M \in \mathcal{M}$) has a natural structure of a monad on $\mathcal{M}$. We denote the Eilenberg-Moore category of this monad by ${}_A \mathcal{M}$ and call it {\em the category of $A$-modules in $\mathcal{M}$}. If $\mathcal{M}$ is a right $\mathcal{C}$-module category, then the category $\mathcal{M}_A$ of right $A$-modules is defined analogously

Suppose that $\mathcal{C}$ is rigid. Since $\mathcal{C}$ is a $\mathcal{C}$-bimodule category by the tensor product, the categories ${}_A \mathcal{C}$ and $\mathcal{C}_A$ are defined. If $(M, \rho)$ is a left $A$-module in $\mathcal{C}$, then $M^*$ is a right $A$-module in $\mathcal{C}$ by the action $\rho': M^* \otimes A \to M^*$ determined by
\begin{equation*}
  \eval_{M} \circ (\id_{M^*} \otimes \rho) = \eval_{M} \circ (\rho' \otimes \id_M).
\end{equation*}
Similarly, if $M$ is a right $A$-module in $\mathcal{C}$, then its right dual object ${}^* M$ is a left $A$-module in a natural way. These constructions give anti-equivalences
\begin{equation*}
  (-)^*: {}_A \mathcal{C} \to \mathcal{C}_A
  \quad \text{and} \quad
  {}^*(-): \mathcal{C}_A \to {}_A \mathcal{C}
\end{equation*}
of categories, that are mutually inverse to each other. In particular, $A^*$ is a right $A$-module. For later use, we remark the following lemma:

\begin{lemma}
  \label{lem:module-adj}
  Let $\mathcal{M}$ be a right module category over a rigid monoidal category $\mathcal{C}$, and let $A$ be an algebra in $\mathcal{C}$. Then the functor
  \begin{equation*}
    \id_{\mathcal{M}} \otimes A^*: \mathcal{M} \to \mathcal{M}_A,
    \quad M \mapsto M \otimes A^*
  \end{equation*}
  is right adjoint to the forgetful functor $\mathcal{M}_A \to \mathcal{M}$.
\end{lemma}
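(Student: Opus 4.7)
My plan is to exhibit the adjunction $U \dashv F$, where $U : \mathcal{M}_A \to \mathcal{M}$ is the forgetful functor and $F = \id_{\mathcal{M}} \otimes A^*$, by constructing the unit and counit directly from the duality data of $A$ and its algebra structure, and then checking the triangle identities. First I would record the right $A$-module structure on $F(M) = M \otimes A^*$. Since $A$ itself is a left $A$-module via its multiplication, the discussion preceding the lemma equips $A^*$ with a canonical right $A$-action $\rho'_A : A^* \otimes A \to A^*$; the action on $M \otimes A^*$ is then $\id_M \otimes \rho'_A$, and the fact that this lies in $\mathcal{M}_A$ follows from the right $\mathcal{C}$-module category axioms on $\mathcal{M}$.

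Next I would define the candidate counit $\varepsilon : UF \Rightarrow \id_{\mathcal{M}}$ at $M \in \mathcal{M}$ by
\[
  \varepsilon_M = (\id_M \otimes \eval_A) \circ (\id_{M \otimes A^*} \otimes u),
\]
where $u : \unitobj \to A$ is the unit of $A$, and the candidate unit $\eta : \id_{\mathcal{M}_A} \Rightarrow FU$ at $(N, \sigma) \in \mathcal{M}_A$ by
\[
  \eta_N = (\sigma \otimes \id_{A^*}) \circ (\id_N \otimes \coev_A).
\]
Naturality of both families is automatic from naturality of the duality $(\eval_A, \coev_A)$ and of the action $\sigma$.

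The third step is the verification, split into three parts: (a) $\eta_N$ is in fact a morphism in $\mathcal{M}_A$, (b) the triangle identity $\varepsilon_{U(N)} \circ U(\eta_N) = \id_{U(N)}$, and (c) the triangle identity $F(\varepsilon_M) \circ \eta_{F(M)} = \id_{F(M)}$. Part (a) is the content-heavy step: it uses associativity of $\sigma$ together with the very definition of $\rho'_A$ recalled above, which is tailored precisely so that the relevant square commutes. Parts (b) and (c) each reduce to one application of the zigzag identity defining the duality, combined with the respective unit axioms for $A$ and for the $A$-action $\sigma$.

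The main obstacle is bookkeeping rather than anything conceptual: each verification is a composition of tensor products in which one must carefully track the positions of $\eval_A$, $\coev_A$, $u$, and $\sigma$. Working in the strictified monoidal setting already assumed by the paper, or equivalently drawing string diagrams, reduces every identity to a single move, and so no deeper input is needed beyond rigidity of $\mathcal{C}$ and the defining axioms for algebras, modules, and module categories.
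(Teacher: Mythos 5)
Your proof is correct, and it is exactly the standard unit/counit argument: the paper itself omits the proof, remarking that it is standard and deferring to \cite[Lemma~2.1]{2014arXiv1402.3482S}, where the adjunction $U \dashv (\id_{\mathcal{M}} \otimes A^*)$ is established in essentially the same way. Your choices of unit $\eta_N = (\sigma \otimes \id_{A^*}) \circ (\id_N \otimes \coev_A)$ and counit $\varepsilon_M = (\id_M \otimes \eval_A) \circ (\id_{M \otimes A^*} \otimes u)$ are the right ones given the paper's convention $\eval_A : A^* \otimes A \to \unitobj$, $\coev_A : \unitobj \to A \otimes A^*$, and the three verifications reduce, as you say, to the zigzag identities, unitality and associativity of $\sigma$, and the defining relation $\eval_A \circ (\rho'_A \otimes \id_A) = \eval_A \circ (\id_{A^*} \otimes m)$ for the right action on $A^*$.
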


The proof is standard and is given, for example, in \cite[Lemma 2.1]{2014arXiv1402.3482S}. There is a similar description of a right adjoint of the forgetful functor from the category of left modules.

\subsection{Finite tensor categories}

Throughout this paper, we work over a fixed algebraically closed field $k$. By an algebra over $k$, we always mean an associative and unital algebra over the field $k$. Given an algebra $A$ over $k$, we denote by $\lmod{A}$ the category of finite-dimensional left $A$-modules.

\begin{definition}
  A {\em finite abelian category} (over $k$) is a $k$-linear category that is equivalent to $\lmod{A}$ for some finite-dimensional algebra $A$ over $k$.
\end{definition}

A $k$-linear functor between finite abelian categories is left (right) exact if and only if it has a left (right) adjoint. Now we set $\mathcal{V} = \lmod{k}$. By using this fact, we define the left action $\bullet$ of $\mathcal{V}$ on a finite abelian category $\mathcal{M}$ by
\begin{equation*}
  \Hom_{\mathcal{M}}(V \bullet X, Y) \cong \Hom_{k}(V, \Hom_{\mathcal{M}}(X, Y))
\end{equation*}
for $V \in \mathcal{V}$ and $X, Y \in \mathcal{M}$.

\begin{definition}
  A {\em finite tensor category} \cite{MR2119143} is a rigid monoidal category $\mathcal{C}$ such that $\mathcal{C}$ is a finite abelian category over $k$, the tensor product of $\mathcal{C}$ is $k$-linear in each variable, and the unit object $\unitobj \in \mathcal{C}$ is a simple object.
\end{definition}

Let $\mathcal{C}$ be a finite tensor category. Then the tensor product of $\mathcal{C}$ is exact and faithful in each variable. It is known that $\mathcal{C}$ is {\em Frobenius} in the sense that an object of $\mathcal{C}$ is injective if and only if it is projective. If $P \in \mathcal{C}$ is projective, then $P \otimes X$ is projective for all objects $X \in \mathcal{C}$.

\subsection{Ends and coends}
\label{subsec:ends}

Let $\mathcal{A}$ and $\mathcal{V}$ be categories, and let $S, T: \mathcal{A}^{\op} \times \mathcal{A} \to \mathcal{V}$ be functors. A {\em dinatural transformation} $\omega: S \dinto T$ from $S$ to $T$ is a family
\begin{equation*}
  \omega = \{ \omega_X: S(X, X) \to T(X, X) \}_{X \in \mathcal{A}}
\end{equation*}
of morphisms in $\mathcal{A}$ such that the equation
\begin{equation*}
  T(X, f) \circ \omega_X \circ S(f, X)
  = T(f, Y) \circ \omega_Y \circ S(Y, f)
\end{equation*}
holds for all morphisms $f: X \to Y$ in $\mathcal{A}$. An {\em end} of $T$ is a pair $(E, \pi)$ consisting of an object $E \in \mathcal{V}$ and a dinatural transformation $\pi: E \dinto T$ from the object $E$ (regarded as a constant functor from $\mathcal{A}^{\op} \times \mathcal{A}$ to $\mathcal{V}$) to $T$ that is `universal' in the following sense: For any pair $(M, \omega)$ consisting of an object $M \in \mathcal{V}$ and a dinatural transformation $\omega: M \dinto T$, there exists a unique morphism $\phi: M \to E$ such that $\omega_X = \pi_X \circ \phi$ for all $X \in \mathcal{A}$. A {\em coend} of $S$ is a pair $(C, i)$ consisting of an object $C \in \mathcal{V}$ and a `universal' dinatural transformation $i: S \dinto C$. Following \cite{MR1712872}, the end of $T$ and the coend of $S$ will be denoted by
\begin{equation*}
  \int_{X \in \mathcal{A}} T(X, X)
  \quad \text{and} \quad
  \int^{X \in \mathcal{A}} S(X, X),
\end{equation*}
respectively.

In this paper, the following formula will be used extensively: Let $F, G: \mathcal{A} \to \mathcal{B}$ be functors from an essentially small category $\mathcal{A}$ to a category $\mathcal{B}$. Let $\Nat(F, G)$ denote the set of natural transformations from $F$ to $G$. Then we have
\begin{equation*}
  \int_{X \in \mathcal{A}} \Hom_{\mathcal{B}}(F(X), G(X)) \cong \Nat(F, G)
\end{equation*}
in the category of sets; see \cite[IX.5]{MR1712872}.

\subsection{Eilenberg-Watts calculus}
\label{subsec:EW-calc}

For finite abelian categories $\mathcal{M}$ and $\mathcal{N}$, we denote by $\REX(\mathcal{M}, \mathcal{N})$ the category of $k$-linear right exact functors from $\mathcal{M}$ to $\mathcal{N}$. We also denote by $\mathcal{M} \boxtimes \mathcal{N}$ their Deligne tensor product \cite{MR1106898}; see also \cite[Subsection 1.11]{MR3242743}. The Eilenberg-Watts theorem implies the following result:

\begin{lemma}
  \label{lem:EW-calc}
  For finite abelian categories $\mathcal{M}$ and $\mathcal{N}$, the functor
  \begin{equation}
    \label{eq:EX-calc-equiv-1}
    \mathcal{M}^{\op} \boxtimes \mathcal{N} \to \REX(\mathcal{M}, \mathcal{N}),
    \quad M^{\op} \boxtimes N \mapsto \Hom_{\mathcal{M}}(-, M)^* \bullet N
  \end{equation}
  is an equivalence of $k$-linear categories. Here, an object $M \in \mathcal{M}$ is denoted by $M^{\op}$ when it is regarded as an object of $\mathcal{M}^{\op}$. Moreover, the end
  \begin{equation*}
    \int_{X \in \mathcal{M}} X^{\op} \boxtimes F(X) \in \mathcal{M}^{\op} \boxtimes \mathcal{N}
  \end{equation*}
  exists for all $k$-linear right exact functors $F: \mathcal{M} \to \mathcal{N}$, and the assignment
  \begin{equation}
    \label{eq:EX-calc-equiv-2}
    \REX(\mathcal{M}, \mathcal{N}) \to \mathcal{M}^{\op} \boxtimes \mathcal{N},
    \quad F \mapsto \int_{X \in \mathcal{M}} X^{\op} \boxtimes F(X)
  \end{equation}
  is a quasi-inverse functor of \eqref{eq:EX-calc-equiv-1}.
\end{lemma}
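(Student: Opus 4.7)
The plan is to first establish that the functor $\Phi$ of \eqref{eq:EX-calc-equiv-1} is well-defined and an equivalence by reducing to the classical Eilenberg--Watts theorem for algebras, and then to construct the end in part (2) and verify that it yields the quasi-inverse in part (3).

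First, to see $\Phi$ is well-defined, note that for each $M \in \mathcal{M}$ the functor $X \mapsto \Hom_{\mathcal{M}}(X, M)^*$ is right exact on $\mathcal{M}$ (since $\Hom_{\mathcal{M}}(-, M)$ is left exact and $k$-linear duality is exact on $\mathcal{V}$), and tensoring with $N \in \mathcal{N}$ via $\bullet$ preserves right exactness. The assignment $(M^{\op}, N) \mapsto \Hom_{\mathcal{M}}(-, M)^* \bullet N$ is thus right exact and $k$-bilinear, so it factors through $\mathcal{M}^{\op} \boxtimes \mathcal{N}$ by the universal property of the Deligne tensor product. To show $\Phi$ is an equivalence, I would fix an equivalence $\mathcal{M} \cong \lmod{A}$ for a finite-dimensional $k$-algebra $A$; the $k$-linear duality $M \mapsto \Hom_k(M, k)$ then induces $\mathcal{M}^{\op} \cong \lmod{A^{\op}}$, and combining the standard identification $\lmod{A^{\op}} \boxtimes \mathcal{N} \cong \mathcal{N}_A$ with the classical Eilenberg--Watts theorem $\REX(\lmod{A}, \mathcal{N}) \cong \mathcal{N}_A$ (where $F \mapsto F(A)$ carries the right $A$-action induced by $A^{\op} \cong \End_{\lmod{A}}(A) \to \End_{\mathcal{N}}(F(A))$), one checks by tracing the image of $M^{\op} \boxtimes N$ that $\Phi$ corresponds to the identity on $\mathcal{N}_A$.

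For the existence of $\Psi(F) := \int_{X \in \mathcal{M}} X^{\op} \boxtimes F(X)$, I would fix a projective generator $P \in \mathcal{M}$ and realize the end as an equalizer in the finite abelian category $\mathcal{M}^{\op} \boxtimes \mathcal{N}$ whose two parallel arrows encode the pre- and post-composition actions of $\End_{\mathcal{M}}(P)$ on $P^{\op} \boxtimes F(P)$. Since every object of $\mathcal{M}$ admits a two-step resolution by copies of $P$ and $F$ is right exact, a dinatural transformation into the bifunctor $(X, Y) \mapsto X^{\op} \boxtimes F(Y)$ is determined by its component at $P$, subject only to compatibility with $\End_{\mathcal{M}}(P)$, so the equalizer satisfies the universal property of the end.

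Finally, to confirm $\Psi$ is quasi-inverse to $\Phi$, I would run a co-Yoneda-type calculation: using exactness of $\Phi$ and the equalizer description of the end, one obtains
\[
\Phi(\Psi(F))(Y) \cong \int_{X \in \mathcal{M}} \Hom_{\mathcal{M}}(Y, X)^* \bullet F(X) \cong F(Y)
\]
for right exact $F$, and dually $\Psi(\Phi(M^{\op} \boxtimes N)) \cong M^{\op} \boxtimes N$. I expect the main obstacle to lie in steps (2) and (3): verifying that the equalizer description genuinely satisfies the universal property against dinatural transformations from arbitrary objects of $\mathcal{M}^{\op} \boxtimes \mathcal{N}$, and checking the final co-Yoneda identity, both require careful bookkeeping with the $\mathcal{V}$-action and the right $A$-module structures.
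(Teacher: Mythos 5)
The paper does not actually prove Lemma~\ref{lem:EW-calc}: it attributes the statement to the Eilenberg--Watts theorem and defers the details to \cite[Lemma 3.3]{2014arXiv1412.0211S}, so there is no in-paper argument to compare against. Your route is exactly the one the paper is pointing at, and it is correct in outline: fixing $\mathcal{M}\simeq\lmod{A}$, identifying $\mathcal{M}^{\op}\simeq\lmod{A^{\op}}$ via $k$-linear duality, and matching \eqref{eq:EX-calc-equiv-1} with the classical equivalence $\REX(\lmod{A},\mathcal{N})\simeq\mathcal{N}_A$, $F\mapsto F(A)$, does establish the first claim, and your enriched-Yoneda computation $\int_{X}[\Hom_{\mathcal{M}}(Y,X),F(X)]\cong F(Y)$ is the right mechanism for the quasi-inverse. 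You have also correctly located the one genuinely delicate point: the universal property of your equalizer. Determination of a dinatural family by its component at $P$ is the easy half; the substantive half is showing that \emph{every} morphism $W\to P^{\op}\boxtimes F(P)$ equalizing the two $\End_{\mathcal{M}}(P)$-actions extends to a dinatural transformation $W\dinto(-)^{\op}\boxtimes F(-)$ over all of $\mathcal{M}$. This needs (i) additivity to pass from $P$ to $\mathrm{add}(P)$, and (ii) for general $X$ with presentation $P_1\to P_0\to X\to 0$, the fact that $p^{\op}\boxtimes\id_{F(X)}$ is monic (because $-\boxtimes F(X)$ is exact and $p^{\op}$ is monic in $\mathcal{M}^{\op}$) together with right exactness of $F$ to define the component at $X$ and check it is independent of the presentation. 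A complete write-up must carry this out. Alternatively, once \eqref{eq:EX-calc-equiv-1} is known to be an equivalence you can avoid the equalizer entirely by showing that $\Phi^{-1}(F)$ represents $W\mapsto\int_{X}\Hom(W,X^{\op}\boxtimes F(X))$: transport $\Hom(W,X^{\op}\boxtimes F(X))$ across $\Phi$, rewrite it as $\int_{Y}\Hom_k\bigl(\Hom_{\mathcal{M}}(Y,X),\Hom_{\mathcal{N}}(\Phi(W)(Y),F(X))\bigr)$, and apply the ordinary Yoneda lemma and Fubini for ends of vector spaces; this yields existence of the end and the quasi-inverse statement in one stroke and matches how ends of this shape are manipulated elsewhere in the paper (e.g.\ in the proof of Lemma~\ref{lem:Rex-proj}).
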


A detailed proof of this lemma is found in \cite[Lemma 3.3]{2014arXiv1412.0211S}. See also \cite{2016arXiv161204561F} for related results and other variants of this type of equivalences.

Lemma~\ref{lem:EW-calc} implies that $\REX(\mathcal{M}, \mathcal{N})$ is a finite abelian category. As an application of the lemma, we give the following characterization of the projectivity of an object of $\REX(\mathcal{M}, \mathcal{N})$.

\begin{lemma}
  \label{lem:Rex-proj}
  Let $\mathcal{M}$ and $\mathcal{N}$ be finite abelian categories, and let $F: \mathcal{M} \to \mathcal{N}$ be a $k$-linear right exact functor with right adjoint $G$. The functor $F$ is a projective object of $\REX(\mathcal{M}, \mathcal{N})$ if and only if
  \begin{enumerate}
  \item $F(M) \in \mathcal{N}$ is projective for all objects $M \in \mathcal{M}$, and
  \item $G(N) \in \mathcal{M}$ is injective for all objects $N \in \mathcal{N}$.
  \end{enumerate}
\end{lemma}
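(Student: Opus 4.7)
The plan is to identify $\REX(\mathcal{M}, \mathcal{N})$ with the Deligne tensor product $\mathcal{M}^{\op} \boxtimes \mathcal{N}$ via the Eilenberg--Watts equivalence of Lemma~\ref{lem:EW-calc}, under which the functor $F_{I, P}(X) := \Hom_{\mathcal{M}}(X, I)^* \bullet P$ corresponds to $I^{\op} \boxtimes P$. Fixing $\mathcal{M} \simeq \lmod{A}$ and $\mathcal{N} \simeq \lmod{B}$ with $A$ and $B$ finite-dimensional, algebraic closedness of $k$ implies that primitive idempotents of $A^{\op} \otimes B$ factor as tensor products of primitive idempotents, so the indecomposable projective objects of $\mathcal{M}^{\op} \boxtimes \mathcal{N}$ are precisely the $I^{\op} \boxtimes P$ with $I$ indecomposable injective in $\mathcal{M}$ and $P$ indecomposable projective in $\mathcal{N}$. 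Consequently every projective object of $\REX(\mathcal{M}, \mathcal{N})$ is a finite direct sum $\bigoplus_i F_{I_i, P_i}$ with $I_i$ injective and $P_i$ projective. A direct application of $\Hom_{\mathcal{N}}(V \bullet P, Y) \cong \Hom_k(V, \Hom_{\mathcal{N}}(P, Y))$ identifies the right adjoint of $F_{I, P}$ as $G_{I, P}(Y) = \Hom_{\mathcal{N}}(P, Y) \bullet I$.

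For the direction $(\Rightarrow)$, if $F$ is projective then $F$ is a summand of some $\bigoplus_i F_{I_i, P_i}$, and hence $F(M)$ is a summand of $\bigoplus_i \Hom_{\mathcal{M}}(M, I_i)^* \bullet P_i$, a direct sum of copies of the projectives $P_i$ and therefore projective; symmetrically, $G$ is a summand of $\bigoplus_i G_{I_i, P_i}$, so $G(N)$ is a summand of a direct sum of copies of the injectives $I_i$ and therefore injective.

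For the direction $(\Leftarrow)$, assume (1) and (2), and let $\pi: Q \twoheadrightarrow F$ be a surjection in $\REX$ from a projective $Q = \bigoplus_i F_{I_i, P_i}$; such a $Q$ exists because $\REX(\mathcal{M}, \mathcal{N})$ is a finite abelian category with enough projectives. Since kernels and cokernels in $\REX$ are computed pointwise, evaluating $\pi$ at $X \in \mathcal{M}$ yields a short exact sequence $0 \to K(X) \to Q(X) \to F(X) \to 0$ in $\mathcal{N}$ whose right term is projective by (1), so each such sequence splits in $\mathcal{N}$, but only non-naturally in $X$. The main obstacle is to promote these pointwise splittings to a global splitting of $\pi$ in $\REX$. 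For this, I would observe that the adjunction identity $\Hom_{\mathcal{M}}(-, G(N)) \cong \Hom_{\mathcal{N}}(F(-), N)$ shows that condition~(2) is equivalent to $F$ sending every monomorphism in $\mathcal{M}$ to a split monomorphism in $\mathcal{N}$, and symmetrically that condition~(1) is equivalent to $G$ sending every epimorphism to a split epimorphism. Translating to the bimodule picture $F = M \otimes_A (-)$ and $G = \Hom_B(M, -)$ with $M \in {}_{B}\mathrm{mod}_{A}$, I would then combine these two splitting properties with the decomposition of $A$ and $B$ into primitive idempotents to exhibit $M$ as a summand of $\bigoplus_{i, j} B e_i \otimes_k e'_j A$. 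Since the latter bimodules are the indecomposable projective $(B, A)$-bimodules, this identifies $M$ as a projective bimodule and therefore $F$ as a projective object of $\REX(\mathcal{M}, \mathcal{N})$, completing the proof.
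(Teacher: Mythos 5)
Your forward direction is fine: over an algebraically closed field the indecomposable projectives of $\mathcal{M}^{\op}\boxtimes\mathcal{N}$ are indeed the objects $I^{\op}\boxtimes P$ with $I$ injective and $P$ projective, your identification of the right adjoint of $F_{I,P}$ as $G_{I,P}(Y)=\Hom_{\mathcal{N}}(P,Y)\bullet I$ is correct, and the summand argument for ($\Rightarrow$) goes through. Your reformulations of the hypotheses are also correct: (2) is equivalent to $F$ sending monomorphisms to split monomorphisms, and (1) to $G$ sending epimorphisms to split epimorphisms.

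The gap is in the converse. After correctly flagging that the pointwise splittings of $Q(X)\twoheadrightarrow F(X)$ are non-natural, you replace the problem by the assertion that the two splitting properties ``exhibit $M$ as a summand of $\bigoplus_{i,j}Be_i\otimes_k e_j'A$.'' But that assertion \emph{is} the statement to be proved (projectivity of $M$ as a $B\otimes_k A^{\op}$-module), and no mechanism is supplied for producing the required bimodule-level section: the splittings coming from (1) are only left $B$-linear and those coming from (2) only right $A$-linear, and nothing in the sketch explains how to combine them into a single $(B,A)$-bilinear splitting of, say, $B\otimes_k M\otimes_k A\twoheadrightarrow M$. As written, the hard direction is asserted rather than proved. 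The paper avoids this entirely: it characterizes projectivity of $F$ by exactness of $\mathcal{Y}_F=\Nat(F,-)\circ\Phi$, invokes Deligne's Proposition 5.13 (a $k$-linear functor on $\mathcal{M}^{\op}\boxtimes\mathcal{N}$ is exact iff the associated bifunctor on $\mathcal{M}^{\op}\times\mathcal{N}$ is exact in each variable), and computes
\begin{equation*}
  \mathcal{Y}_F(M^{\op}\boxtimes N)\cong\Hom_{\mathcal{N}}(F(M),N)\cong\Hom_{\mathcal{M}}(M,G(N)),
\end{equation*}
so that exactness in $N$ is condition (1) and exactness in $M$ is condition (2), with no splitting argument needed. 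If you want to keep your setup, the fix is to prove ($\Leftarrow$) by showing $\Nat(F,-)$ is exact via this separate-exactness criterion rather than by splitting a projective cover.
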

\begin{proof}
  An object $F \in \REX(\mathcal{M}, \mathcal{N})$ is projective if and only if the functor
  \begin{equation*}
    \mathcal{Y}_F := \Nat(F, -) \circ \Phi
  \end{equation*}
  is exact, where $\Phi$ is the equivalence given by~\eqref{eq:EX-calc-equiv-1}. By the property of the Deligne tensor product \cite[Proposition~5.13]{MR1106898}, $\mathcal{Y}_F$ is exact if and only if the functor
  \begin{equation}
    \label{eq:Rex-proj-pf-1}
    \mathcal{M}^{\op} \times \mathcal{N} \to \mathcal{V},
    \quad (M^{\op}, N) \mapsto \mathcal{Y}_F(M^{\op} \boxtimes N)
  \end{equation}
  is exact in each variable. For $M \in \mathcal{M}$ and $N \in \mathcal{N}$, we have:
  \begin{align*}
    \mathcal{Y}_F(M^{\op} \boxtimes N)
    & = \Nat(F, \Phi(M^{\op} \boxtimes N)) \\
    & \cong \textstyle \int_{X \in \mathcal{M}} \Hom_{\mathcal{N}}(F(X), \Hom_{\mathcal{M}}(X, M)^* \bullet N) \\
    & \cong \textstyle \int_{X \in \mathcal{M}} \Hom_{\mathcal{M}}(X, M)^* \otimes_k \Hom_{\mathcal{N}}(F(X), N) \\
    & \cong \textstyle \int_{X \in \mathcal{M}}
      \Hom_k(\Hom_{\mathcal{M}}(X, M), \Hom_{\mathcal{N}}(F(X), N)) \\
    & \cong \Nat(\Hom_{\mathcal{M}}(-, M), \Hom_{\mathcal{N}}(F(-), N))) \\
    & \cong \Hom_{\mathcal{N}}(F(M), N) \quad \text{(by the Yoneda lemma)} \\
    & \cong \Hom_{\mathcal{M}}(M, G(N)).
  \end{align*}
  Hence the exactness of~\eqref{eq:Rex-proj-pf-1} in the variable $N$ is equivalent to the condition (1), and the exactness in $M$ is equivalent to (2). The proof is done.
\end{proof}

We will use the above lemma in the following form:

\begin{corollary}
  \label{cor:Rex-proj}
  Let $\mathcal{C}$ be a finite tensor category, and let $X \in \mathcal{C}$ be an object. Then the functor $X \otimes \id_{\mathcal{C}}$ is a projective object of $\REX(\mathcal{C}, \mathcal{C})$ if and only if $X$ is a projective object of $\mathcal{C}$.
\end{corollary}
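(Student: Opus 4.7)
My plan is to apply Lemma~\ref{lem:Rex-proj} directly to $F := X \otimes \id_{\mathcal{C}}$. Since $\mathcal{C}$ is rigid, the standard adjunction $\Hom_{\mathcal{C}}(X \otimes M, N) \cong \Hom_{\mathcal{C}}(M, X^* \otimes N)$ supplies a right adjoint $G = X^* \otimes \id_{\mathcal{C}}$ for $F$. Lemma~\ref{lem:Rex-proj} then translates the projectivity of $F$ in $\REX(\mathcal{C}, \mathcal{C})$ into the conjunction of two conditions: that $X \otimes M$ is projective for every $M \in \mathcal{C}$, and that $X^* \otimes N$ is injective for every $N \in \mathcal{C}$. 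It suffices to show that this conjunction is equivalent to $X$ being projective.

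For the forward direction, I would assume $X$ is projective. Condition (1) is immediate from the fact, recalled in the finite tensor category subsection, that tensoring a projective object with any object produces a projective. For condition (2), I will use that the left duality $(-)^*: \mathcal{C}^{\rev} \to \mathcal{C}^{\op}$ is a $k$-linear equivalence, so carries injective objects to projective ones; combined with the Frobenius property (injective $=$ projective) of $\mathcal{C}$, projectivity of $X$ yields projectivity of $X^*$. The same stability under tensoring then shows that $X^* \otimes N$ is projective, and one more application of Frobenius yields (2).

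For the converse, I would simply specialize condition (1) to $M = \unitobj$, whereupon $X \cong X \otimes \unitobj$ is projective in $\mathcal{C}$.

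The only conceptual subtlety I anticipate is condition (2): it rests on the fact that duals of projectives are projective in a finite tensor category, which is not a purely formal statement about rigid monoidal categories but requires both the anti-equivalence property of $(-)^*$ and the Frobenius property of $\mathcal{C}$. Once this stability under duality is in place, the corollary reduces to an essentially immediate application of Lemma~\ref{lem:Rex-proj}.
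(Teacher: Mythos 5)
Your argument is correct and is exactly the intended derivation: the paper states the corollary as an immediate consequence of Lemma~\ref{lem:Rex-proj}, and your verification of the two conditions via stability of projectives under tensoring, the duality anti-equivalence, and the Frobenius property is the expected one. The only slip is that, with the paper's conventions for $\eval_X$ and $\coev_X$, the right adjoint of $X \otimes \id_{\mathcal{C}}$ is ${}^*\!X \otimes \id_{\mathcal{C}}$ rather than $X^* \otimes \id_{\mathcal{C}}$; this is harmless, since ${}^*\!X$ is likewise projective whenever $X$ is, by the same combination of the anti-equivalence and the Frobenius property.
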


\section{Remarks on the adjoint algebra}
\label{sec:rem-adj-alg}

\subsection{The adjoint algebra}
\label{subsec:Dri-cent}

Let $\mathcal{C}$ be a finite tensor category over $k$, and let $V \in \mathcal{C}$ be an object. A {\em half-braiding} for $V$ is a natural isomorphism
\begin{equation*}
  \sigma_X: V \otimes X \to X \otimes V
  \quad (X \in \mathcal{C})
\end{equation*}
satisfying $\sigma_{X \otimes Y} = (\id_X \otimes \sigma_Y) \circ (\sigma_X \otimes \id_Y)$ for all objects $X, Y \in \mathcal{C}$. The {\em Drinfeld center} of $\mathcal{C}$, denoted by $\mathcal{Z}(\mathcal{C})$, is the category whose objects are pairs $(V, \sigma)$ consisting of an object $V \in \mathcal{C}$ and a half-braiding $\sigma$ for $V$. The Drinfeld center is in fact a braided finite tensor category; see \cite[Proposition 7.13.8]{MR3242743}.

There is the forgetful functor $\mathsf{U}: \mathcal{Z}(\mathcal{C}) \to \mathcal{C}$ defined by $(V, \sigma) \mapsto V$. It is known that $\mathsf{U}$ has a right adjoint, say $\mathsf{R}$. Since $\mathsf{U}$ is a strong monoidal functor, $\mathsf{R}$ is a (lax) monoidal functor. Thus $\mathsf{A} := \mathsf{U} \mathsf{R}(\unitobj)$ is an algebra in $\mathcal{C}$ as the image of the trivial algebra under the monoidal functor $\mathsf{U} \mathsf{R}$.

\begin{definition}
  \label{def:adj-alg}
  We call the algebra $\mathsf{A}$ the {\em adjoint algebra} of $\mathcal{C}$.
\end{definition}

As we will see in Example~\ref{ex:adj-Hopf-alg}, the adjoint algebra is a categorical analogue of the adjoint representation of a Hopf algebra, which is in fact a module-algebra over the Hopf algebra. The adjoint algebra plays a crucial role in recent study of finite tensor categories; see \cite{MR3314297,2015arXiv150401178S,2016arXiv160805905S}. In this section, we summarize results on the adjoint algebra and note some immediate consequence of them.

\subsection{The central Hopf comonad}

Let $\mathcal{C}$ be a finite tensor category. There is the following sequence of $k$-linear exact functors:
\begin{equation*}
  \REX(\mathcal{C})
  := \REX(\mathcal{C}, \mathcal{C})
  \xrightarrow[{\quad \text{Lemma~\ref{lem:EW-calc}} \quad}]{\approx}
  \mathcal{C} \boxtimes \mathcal{C}^{\op}
  \xrightarrow[{\quad \id_{\mathcal{C}} \boxtimes (-)^* \quad}]{\approx}
  \mathcal{C} \boxtimes \mathcal{C}
  \xrightarrow[{\quad \otimes \quad }]{} \mathcal{C}.
\end{equation*}
By considering the image of $\id_{\mathcal{C}} \otimes V \in \REX(\mathcal{C})$, we see that the end
\begin{equation}
  \label{eq:Z-end}
  \mathsf{Z}(V) = \int_{X \in \mathcal{C}} X \otimes V \otimes X^*
\end{equation}
exists for all $V \in \mathcal{C}$. We denote by
\begin{equation}
  \uppi_{V;X}: \mathsf{Z}(V) \to X \otimes V \otimes X^* \quad (V, X \in \mathcal{C})
\end{equation}
the universal dinatural transformation for the end $\mathsf{Z}(V)$. The assignment $V \mapsto \mathsf{Z}(V)$ extends to a $k$-linear endofunctor on $\mathcal{C}$ in such a way that the universal dinatural transformation $\uppi_{V;X}$ is natural in $V$. The functor $Z$ has a comonad structure
\begin{equation}
  \label{eq:Z-comonad}
  \updelta_V: \mathsf{Z}(V) \to \mathsf{Z}^2(V)
  \quad \text{and} \quad
  \upepsilon_V: \mathsf{Z}(V) \to V
  \quad (V \in \mathcal{C})
\end{equation}
uniquely determined by the property that the equations $\upepsilon_V = \uppi_{\unitobj; V}$ and
\begin{equation}
  \label{eq:Z-def-comult}
  (\id_{X} \otimes \uppi_{V;Y} \otimes \id_{X^*}) \circ \uppi_{\mathsf{Z}(V);X} \circ \updelta_V = \uppi_{V; X \otimes Y}
\end{equation}
hold for all objects $V, X, Y \in \mathcal{C}$ (see \cite[Subsection 3.4]{2016arXiv160805905S} for details).

The category $\lcomod{\mathsf{Z}}$ of $\mathsf{Z}$-comodules can be identified with the Drinfeld center of $\mathcal{C}$ ({\it cf}. Day-Street \cite{MR2342829} and Brugui\`eres-Virelizier \cite{MR2869176}, where the dual situation is considered). For simplicity, we set
\begin{equation}
  \label{eq:Z-partial}
  \partial_{V,X} = (\id_X \otimes \id_V \otimes \eval_X) \circ (\uppi_{V; X} \otimes \id_X)
\end{equation}
for $V, X \in \mathcal{C}$. To establish the identification $\lcomod{\mathsf{Z}} \cong \mathcal{Z}(\mathcal{C})$, we note:

\begin{lemma}
  \label{lem:Hom-V-ZW}
  Given a morphism $t: V \to \mathsf{Z}(W)$ in $\mathcal{C}$, we define
  \begin{equation}
    \label{eq:Hom-V-ZW}
    \widetilde{t}_X = \partial_{W,X} \circ (t \otimes \id_X)
  \end{equation}
  for $X \in \mathcal{C}$. Then $\widetilde{t}_X$ is natural in the variable $X \in \mathcal{C}$. Moreover, the map
  \begin{equation}
    \label{eq:Hom-V-ZW-iso}
    \Hom_{\mathcal{C}}(V, \mathsf{Z}(W)) \to \Nat(V \otimes \id_{\mathcal{C}}, \id_{\mathcal{C}} \otimes W),
    \quad t \mapsto \widetilde{t}
  \end{equation}
  is an isomorphism that is natural in the variables $V, W \in \mathcal{C}$.
\end{lemma}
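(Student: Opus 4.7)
The plan is to establish the bijection via the universal property of the end \(\mathsf{Z}(W) = \int_{X \in \mathcal{C}} X \otimes W \otimes X^*\). Conceptually, the bijection is the composite
\[
\Hom_{\mathcal{C}}(V, \mathsf{Z}(W)) \cong \textstyle\int_{X} \Hom_{\mathcal{C}}(V, X \otimes W \otimes X^*) \cong \textstyle\int_{X} \Hom_{\mathcal{C}}(V \otimes X, X \otimes W) \cong \Nat(V \otimes \id_{\mathcal{C}}, \id_{\mathcal{C}} \otimes W),
\]
which uses that \(\Hom_{\mathcal{C}}(V, -)\) preserves ends, the tensor--dual adjunction \(\Hom(V \otimes X, X \otimes W) \cong \Hom(V, X \otimes W \otimes X^*)\), and the standard formula \(\Nat(F, G) \cong \int_X \Hom(F(X), G(X))\) from Subsection~\ref{subsec:ends}. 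What remains is to match this abstract identification with the explicit formula \eqref{eq:Hom-V-ZW} and to verify naturality in \(V, W\).

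First I would verify that \(\widetilde{t}_X\) is natural in \(X\). Given \(f \colon X \to Y\) in \(\mathcal{C}\), expanding the definition of \(\partial_{W, X}\) reduces the identity \((f \otimes \id_W) \circ \widetilde{t}_X = \widetilde{t}_Y \circ (\id_V \otimes f)\) to combining the dinaturality of \(\uppi_{W; -}\), namely
\[
(f \otimes \id_W \otimes \id_{X^*}) \circ \uppi_{W; X} = (\id_Y \otimes \id_W \otimes f^*) \circ \uppi_{W; Y},
\]
with the naturality of evaluation \(\eval_X \circ (f^* \otimes \id_X) = \eval_Y \circ (\id_{Y^*} \otimes f)\).

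Next I would construct an explicit inverse. Given \(\theta \in \Nat(V \otimes \id_{\mathcal{C}}, \id_{\mathcal{C}} \otimes W)\), set
\[
t_X := (\theta_X \otimes \id_{X^*}) \circ (\id_V \otimes \coev_X) \colon V \longrightarrow X \otimes W \otimes X^*.
\]
A short diagram chase combining the naturality of \(\theta\) with the identity \((f \otimes \id_{X^*}) \circ \coev_X = (\id_Y \otimes f^*) \circ \coev_Y\) shows that the family \(\{t_X\}_{X \in \mathcal{C}}\) is dinatural. The universal property of the end thus produces a unique morphism \(t \colon V \to \mathsf{Z}(W)\) with \(\uppi_{W; X} \circ t = t_X\) for all \(X \in \mathcal{C}\), yielding the candidate inverse \(\theta \mapsto t\).

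The verification that the two assignments are mutually inverse is then a pair of snake-identity manipulations. Starting from \(\theta\), substituting \(\uppi_{W; X} \circ t = t_X\) into the definition of \(\widetilde{t}_X\) and applying \((\id_X \otimes \eval_X) \circ (\coev_X \otimes \id_X) = \id_X\) recovers \(\theta_X\). Conversely, starting from \(t\), one computes
\[
(\widetilde{t}_X \otimes \id_{X^*}) \circ (\id_V \otimes \coev_X) = \uppi_{W; X} \circ t
\]
using the dual snake identity \((\eval_X \otimes \id_{X^*}) \circ (\id_{X^*} \otimes \coev_X) = \id_{X^*}\); the universal property of the end then ensures that \(\widetilde{t}\) determines \(t\) uniquely. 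Naturality of the bijection in \(V\) and \(W\) is a routine consequence of the functoriality of the constituents. The main point requiring care is the dinaturality check for \(\{t_X\}\), which must simultaneously invoke the naturality of \(\theta\) and the naturality of the coevaluation; everything else is a direct application of the universal property combined with the snake identities.
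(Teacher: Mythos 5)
Your proposal is correct and follows essentially the same route as the paper: the paper's proof is exactly the composite of the three isomorphisms $\Hom_{\mathcal{C}}(V, \mathsf{Z}(W)) \cong \int_{X} \Hom_{\mathcal{C}}(V, X \otimes W \otimes X^*) \cong \int_{X} \Hom_{\mathcal{C}}(V \otimes X, X \otimes W) \cong \Nat(V \otimes \id_{\mathcal{C}}, \id_{\mathcal{C}} \otimes W)$ that you state at the outset. The remainder of your argument simply unpacks, correctly, the explicit inverse and the snake-identity verifications that the paper leaves implicit.
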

\begin{proof}
  The isomorphism~\eqref{eq:Hom-V-ZW-iso} is given by the composition of isomorphisms
  \begin{align*}
    \Hom_{\mathcal{C}}(V, \mathsf{Z}(W))
    & \xrightarrow{\quad \cong \quad} \textstyle \int_{X \in \mathcal{C}} \Hom_{\mathcal{C}}(V, X \otimes W \otimes X^*) \\
    & \xrightarrow{\quad \cong \quad} \textstyle \int_{X \in \mathcal{C}} \Hom_{\mathcal{C}}(V \otimes X, X \otimes W) \\
    & \xrightarrow{\quad \cong \quad} \Nat(V \otimes \id_{\mathcal{C}}, \id_{\mathcal{C}} \otimes W).
      \qedhere
  \end{align*}
\end{proof}

One can check that a morphism $\rho: V \to \mathsf{Z}(V)$ in $\mathcal{C}$ makes $V$ a $\mathsf{Z}$-comodule if and only if the natural transformation $\widetilde{\rho}: V \otimes \id_{\mathcal{C}} \to \id_{\mathcal{C}} \otimes V$ corresponding to $\rho$ is a half-braiding for $V$. Hence we obtain a category isomorphism
\begin{equation}
  \label{eq:Z-comod-ZC}
  \lcomod{\mathsf{Z}} \cong \mathcal{Z}(\mathcal{C}),
  \quad (V, \rho) \mapsto (V, \widetilde{\rho}\,).
\end{equation}

An object of the form $\mathsf{Z}(W)$, $W \in \mathcal{C}$, is a $\mathsf{Z}$-comodule with the coaction given by the comultiplication of $\mathsf{Z}$. Thus there is a half-braiding for $\mathsf{Z}(W)$, which we will refer to as the {\em canonical half-braiding}. It is convenient to express the natural transformation given by \eqref{eq:Hom-V-ZW} by the canonical half-braiding.

\begin{lemma}
  \label{lem:Hom-V-ZW-br}
  Let $t: V \to \mathsf{Z}(W)$ be a morphism in $\mathcal{C}$, and let $\widetilde{t}$ be the corresponding natural transformation. Then we have
  \begin{equation}
    \label{eq:Hom-V-ZW-br}
    \widetilde{t}_X = (\id_X \otimes \upepsilon_{W}) \circ \sigma_{X} \circ (t \otimes \id_X)
  \end{equation}
  for all $X \in \mathcal{C}$, where $\sigma$ is the canonical half-braiding for $\mathsf{Z}(W)$.
\end{lemma}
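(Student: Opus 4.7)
The plan is to reduce the right-hand side of~\eqref{eq:Hom-V-ZW-br} to the expression $\partial_{W,X} \circ (t \otimes \id_X)$, which by the definition preceding Lemma~\ref{lem:Hom-V-ZW} equals $\widetilde{t}_X$. The main ingredients I will use are (a) the description of $\sigma$ furnished by Lemma~\ref{lem:Hom-V-ZW} applied to the coaction $\updelta_W$, (b) the dinaturality/naturality of $\uppi_{-;X}$ in its first argument, and (c) the comonad counit axiom $\mathsf{Z}(\upepsilon_W) \circ \updelta_W = \id_{\mathsf{Z}(W)}$.

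First I unfold $\sigma$. The object $\mathsf{Z}(W)$ is a $\mathsf{Z}$-comodule via $\updelta_W : \mathsf{Z}(W) \to \mathsf{Z}^2(W)$, and under the isomorphism~\eqref{eq:Z-comod-ZC} this coaction corresponds exactly to the canonical half-braiding $\sigma$. Hence by the formula~\eqref{eq:Hom-V-ZW} defining the correspondence $t \mapsto \widetilde{t}$, we have
\begin{equation*}
  \sigma_X = \partial_{\mathsf{Z}(W), X} \circ (\updelta_W \otimes \id_X)
  = (\id_X \otimes \id_{\mathsf{Z}(W)} \otimes \eval_X) \circ (\uppi_{\mathsf{Z}(W); X} \otimes \id_X) \circ (\updelta_W \otimes \id_X).
\end{equation*}
Substituting this into the right-hand side of~\eqref{eq:Hom-V-ZW-br} and post-composing with $\id_X \otimes \upepsilon_W$, the factor $\upepsilon_W$ can be slid past $\eval_X$ (which only touches the outer $X^* \otimes X$ factors) to produce $(\id_X \otimes \upepsilon_W \otimes \id_{X^*}) \circ \uppi_{\mathsf{Z}(W); X}$ sitting on the left.

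Second, I invoke the naturality of $\uppi_{-;X}$ in its first argument applied to the morphism $\upepsilon_W : \mathsf{Z}(W) \to W$: it gives
\begin{equation*}
  (\id_X \otimes \upepsilon_W \otimes \id_{X^*}) \circ \uppi_{\mathsf{Z}(W); X}
  = \uppi_{W;X} \circ \mathsf{Z}(\upepsilon_W).
\end{equation*}
Plugging this in and using $\mathsf{Z}(\upepsilon_W) \circ \updelta_W = \id_{\mathsf{Z}(W)}$ collapses the two $\mathsf{Z}$'s, leaving precisely $(\id_X \otimes \id_W \otimes \eval_X) \circ (\uppi_{W;X} \otimes \id_X) \circ (t \otimes \id_X) = \partial_{W,X} \circ (t \otimes \id_X) = \widetilde{t}_X$, as required.

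The only potential pitfall in carrying this out is bookkeeping: one must track carefully which tensor factors $\upepsilon_W$ and $\eval_X$ act on and verify that they commute in the appropriate sense so as to free the subexpression $(\id_X \otimes \upepsilon_W \otimes \id_{X^*}) \circ \uppi_{\mathsf{Z}(W); X}$ on which naturality applies. Once that rearrangement is justified, the proof is a direct two-line calculation built from the definition of $\sigma$, the naturality of the universal dinatural family, and the counit axiom of the comonad $\mathsf{Z}$.
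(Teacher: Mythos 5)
Your proposal is correct and follows essentially the same route as the paper's proof: both unfold $\sigma_X$ as $\partial_{\mathsf{Z}(W),X} \circ (\updelta_W \otimes \id_X)$, push $\upepsilon_W$ through via the naturality of the universal dinatural transformation in its first argument, and finish with the comonad counit axiom $\mathsf{Z}(\upepsilon_W) \circ \updelta_W = \id_{\mathsf{Z}(W)}$. Your version merely makes explicit the naturality step that the paper compresses into a single line.
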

\begin{proof}
  By definition, $\sigma$ corresponds to $\updelta_W: \mathsf{Z}(W) \to \mathsf{Z}^2(W)$ via~\eqref{eq:Hom-V-ZW-iso}. Thus,
  \begin{align*}
    (\id_X \otimes \upepsilon_{W}) \circ \sigma_{X}
    & = (\id_X \otimes \upepsilon_{W}) \circ \partial_{\,\mathsf{Z}(W), X} \circ (\updelta_{W} \otimes \id_X) \\
    & = \partial_{W, X} \circ (\upepsilon_{W} \otimes \id_X) \circ (\updelta_{W} \otimes \id_X) \\
    & = \partial_{W,X} \qquad \text{(by the counit axiom)}
  \end{align*}
  for all objects $W, X \in \mathcal{C}$. Now~\eqref{eq:Hom-V-ZW-br} follows from~\eqref{eq:Hom-V-ZW}.
\end{proof}

The isomorphism \eqref{eq:Z-comod-ZC} commutes with the forgetful functors to $\mathcal{C}$. Since $\mathcal{Z}(\mathcal{C})$ is rigid, the comonad $\mathsf{Z}$ has a structure of a Hopf comonad on $\mathcal{C}$ such that \eqref{eq:Z-comod-ZC} is an isomorphism of monoidal categories. The monoidal structure
\begin{equation}
  \label{eq:Z-monoidal}
  \mathsf{u}: \unitobj \to \mathsf{Z}(\unitobj)
  \quad \text{and} \quad
  \mathsf{m}_{V,W}: \mathsf{Z}(V) \otimes \mathsf{Z}(W) \to \mathsf{Z}(V \otimes W)
\end{equation}
of $\mathsf{Z}$ are the morphisms uniquely determined by $\uppi_{\unitobj; X} \circ \mathsf{u} = \coev_X$ and
\begin{equation}
  \label{eq:Z-def-mult}
  \uppi_{V \otimes W; X} \circ \mathsf{m}_{V,W}
  = (\id_X \otimes \id_V \otimes \eval_X \otimes \id_W \otimes \id_{X^*}) \circ (\uppi_{V;X} \otimes \uppi_{W;X})
\end{equation}
for all objects $V, W, X \in \mathcal{C}$, respectively (see \cite{MR2869176}, where the quantum double of a Hopf monad is described).

\begin{definition}
  \label{def:Hopf-comonad-Z}
  We call $\mathsf{Z}$ the {\em central Hopf comonad} on $\mathcal{C}$.
\end{definition}

\subsection{Representation theory of the adjoint algebra}

Let $\mathcal{C}$ be a finite tensor category. We use the same notation as in the previous subsection. If we identify $\mathcal{Z}(\mathcal{C})$ with $\lcomod{\mathsf{Z}}$ by \eqref{eq:Z-comod-ZC}, then the free $\mathsf{Z}$-comodule functor
\begin{equation}
  \label{eq:free-Z-comod}
  \mathsf{R}: \mathcal{C} \to \mathcal{Z}(\mathcal{C}),
  \quad V \mapsto (\mathsf{Z}(V), \updelta_V)
\end{equation}
is a right adjoint functor of $\mathsf{U}$. The unit $\id_{\mathcal{C}} \to \mathsf{U} \mathsf{R}$ and the counit $\mathsf{R} \mathsf{U} \to \id_{\mathcal{Z}(\mathcal{C})}$ of the adjunction $\mathsf{U} \dashv \mathsf{R}$ are given by the comonad structure of $\mathsf{Z}$. The monoidal structure of $\mathsf{R}$ is the morphisms
\begin{equation*}
  \mathsf{m}_{V,W}: \mathsf{R}(V) \otimes \mathsf{R}(W) \to \mathsf{R}(V \otimes W)
  \quad \text{and} \quad \mathsf{u}: \unitobj \to \mathsf{R}(\unitobj),
\end{equation*}
that is, the same structure morphisms as $\mathsf{Z}$. By the definition $\mathsf{U}$ and $\mathsf{R}$, we see that the adjoint algebra is the object
\begin{equation}
  \label{eq:adj-alg-Z}
  \mathsf{A} = \mathsf{U R}(\unitobj) = \mathsf{Z}(\unitobj) = \int_{X \in \mathcal{C}} X \otimes X^*
\end{equation}
with multiplication $\mathsf{m}_{\unitobj,\unitobj}$ and unit $\mathsf{u}$.

\begin{definition}
  \label{def:can-act}
  For an object $X \in \mathcal{C}$, we define
  \begin{equation}
    \uprho^{\ell}_X =\partial_{\unitobj, X}
    \quad \text{and} \quad
    \uprho^{r}_X = (\eval'_{X} \otimes \id_X) \circ (\id_X \otimes \uppi_{\,{}^*\!X; \unitobj}),
  \end{equation}
  where $\partial$ is the natural transformation given by~\eqref{eq:Z-partial}. We call $\uprho^{\ell}_X$ and $\uprho^r_X$ the {\em left} and the {\em right canonical action} of $\mathsf{A}$ on $X$, respectively.
\end{definition}

The morphisms $\uprho^{\ell}_X: \mathsf{A} \otimes X \to X$ and $\uprho^{r}_X: X \otimes \mathsf{A} \to X$ make $X$ into a left and a right $\mathsf{A}$-module in $\mathcal{C}$, respectively. Moreover, the equation
\begin{equation*}
  \eval_X \circ (\id_{X^*} \otimes \uprho^{\ell}_X)
  = \eval_X \circ (\uprho^{r}_{X^*} \otimes \id_X)
\end{equation*}
holds for all $X \in \mathcal{C}$. Namely, the right $\mathsf{A}$-module $(X^*, \uprho_{X^*}^r)$ is the left dual of the left $\mathsf{A}$-module $(X, \uprho^{\ell}_X)$.

We recall that $\mathsf{A} = \mathsf{Z}(\unitobj)$ has the canonical half-braiding. Since the universal dinatural transformation $\uppi$ may not be easy to describe in practical cases, we provide the following formula expressing the canonical actions in terms of the canonical half-braiding.

\begin{lemma}
  Let $\sigma_X: \mathsf{A} \otimes X \to X \otimes \mathsf{A}$ be the canonical half-braiding for $\mathsf{A}$. Then the left and the right canonical action of $\mathsf{A}$ on $X$ are given respectively by
  \begin{equation}
    \uprho_X^{\ell} = (\id_X \otimes \upepsilon_{\unitobj}) \circ \sigma_X
    \quad \text{and} \quad
    \uprho_X^{r} = (\upepsilon_{\unitobj} \otimes \id_X) \circ \sigma_X^{-1}.
  \end{equation}
\end{lemma}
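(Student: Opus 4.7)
The plan is to establish the two identities separately. For the left canonical action, I would apply Lemma~\ref{lem:Hom-V-ZW-br} with $V = \mathsf{A}$, $W = \unitobj$, and $t = \id_{\mathsf{A}}: \mathsf{A} \to \mathsf{Z}(\unitobj)$. The associated natural transformation computed by~\eqref{eq:Hom-V-ZW} is $\widetilde{\id}_X = \partial_{\unitobj, X}$, which equals $\uprho_X^{\ell}$ by Definition~\ref{def:can-act}, whereas~\eqref{eq:Hom-V-ZW-br} rewrites the same transformation as $(\id_X \otimes \upepsilon_{\unitobj}) \circ \sigma_X$. Comparing the two expressions yields the first formula at once.

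For the right canonical action, rather than describing $\sigma_X^{-1}$ directly in terms of the dinatural transformation $\uppi$, I would characterize $\uprho_X^r$ by duality. Substituting ${}^*X$ for $X$ in the relation $\eval_X \circ (\id_{X^*} \otimes \uprho_X^{\ell}) = \eval_X \circ (\uprho_{X^*}^r \otimes \id_X)$ recorded earlier in the subsection shows that $\uprho_X^r$ is the unique morphism $X \otimes \mathsf{A} \to X$ satisfying
\begin{equation*}
  \eval_X' \circ (\uprho_X^r \otimes \id_{{}^*X}) = \eval_X' \circ (\id_X \otimes \uprho_{{}^*X}^{\ell}),
\end{equation*}
the uniqueness being a consequence of the rigidity isomorphism $\Hom_{\mathcal{C}}(X \otimes \mathsf{A}, X) \cong \Hom_{\mathcal{C}}(X \otimes \mathsf{A} \otimes {}^*X, \unitobj)$ induced by $\eval_X'$. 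Plugging the candidate $(\upepsilon_{\unitobj} \otimes \id_X) \circ \sigma_X^{-1}$ into the left side and the already-proven left-action formula $\uprho_{{}^*X}^{\ell} = (\id_{{}^*X} \otimes \upepsilon_{\unitobj}) \circ \sigma_{{}^*X}$ into the right reduces the task to an equality of morphisms $X \otimes \mathsf{A} \otimes {}^*X \to \unitobj$.

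To conclude I would precompose both sides by $\sigma_X \otimes \id_{{}^*X}$. The $\sigma_X^{-1}$ on the left then cancels against $\sigma_X$, leaving the tensor product $\upepsilon_{\unitobj} \otimes \eval_X'$; on the right, the hexagon axiom $\sigma_{X \otimes {}^*X} = (\id_X \otimes \sigma_{{}^*X}) \circ (\sigma_X \otimes \id_{{}^*X})$ collapses the two half-braiding factors into $\sigma_{X \otimes {}^*X}$, after which naturality of $\sigma$ with respect to $\eval_X': X \otimes {}^*X \to \unitobj$ combined with $\sigma_{\unitobj} = \id_{\mathsf{A}}$ gives $(\eval_X' \otimes \id_{\mathsf{A}}) \circ \sigma_{X \otimes {}^*X} = \id_{\mathsf{A}} \otimes \eval_X'$, so the right side also reduces to $\upepsilon_{\unitobj} \otimes \eval_X'$. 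The genuine obstacle is precisely this step of eliminating $\sigma_X^{-1}$, since the inverse half-braiding has no direct description via $\uppi$; the precomposition trick sidesteps it, and the remainder is a routine manipulation of the hexagon and naturality.
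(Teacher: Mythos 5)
Your proof is correct. For the first identity you do exactly what the paper does: apply Lemma~\ref{lem:Hom-V-ZW-br} to $t = \id_{\mathsf{A}} = \id_{\mathsf{Z}(\unitobj)}$ and compare with $\widetilde{\id}_X = \partial_{\unitobj,X} = \uprho^{\ell}_X$. For the second identity your route genuinely differs from the paper's. The paper computes $\uprho^r_X \circ \sigma_X$ head-on: it unwinds $\sigma_X$ as $\partial_{\mathsf{Z}(\unitobj),X} \circ (\updelta_{\unitobj} \otimes \id_X)$, pushes everything through the dinaturality of $\uppi$ and the defining relation \eqref{eq:Z-def-comult} for $\updelta$, and lands on $\upepsilon_{\unitobj} \otimes \id_X$ after a few lines of $\uppi$-calculus. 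You instead invoke the compatibility $\eval_X \circ (\id_{X^*} \otimes \uprho^{\ell}_X) = \eval_X \circ (\uprho^r_{X^*} \otimes \id_X)$ recorded after Definition~\ref{def:can-act}, substitute ${}^*X$ for $X$ (using the convention $({}^*X)^* = X$ so that $\eval_{{}^*X} = \eval'_X$), and reduce to a purely formal identity of half-braidings, which you dispatch with the precomposition-by-$\sigma_X \otimes \id_{{}^*X}$ trick, the axiom $\sigma_{X \otimes {}^*X} = (\id_X \otimes \sigma_{{}^*X}) \circ (\sigma_X \otimes \id_{{}^*X})$, naturality of $\sigma$ along $\eval'_X$, and $\sigma_{\unitobj} = \id_{\mathsf{A}}$. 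What your approach buys is that no further manipulation of the universal dinatural transformation $\uppi$ is needed once the left-action formula and the duality relation are in hand; what it costs is that it leans on that duality relation, which the paper asserts without proof and whose verification is itself a $\uppi$-computation of roughly the same weight as the one the paper performs here. Both arguments are complete and valid.
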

\begin{proof}
  The first equation follows from~\eqref{eq:Hom-V-ZW-br} with $t = \id_{\mathsf{Z}(\unitobj)}$. We consider the second one. For all objects $X \in \mathcal{C}$,
  \begin{align*}
    \uprho_X^{r} \circ \sigma_{X}
    & = (\eval'_{X} \otimes \id_X) \circ (\id_X \otimes \uppi_{\,{}^*\!X; \unitobj}) \\
    & \qquad \circ (\id_{X} \otimes \id_{\mathsf{Z}(\unitobj)} \otimes \eval_X)
      \circ (\uppi_{\mathsf{Z}(\unitobj); X} \otimes \id_X) \circ (\updelta_{\unitobj} \otimes \id_{X}) \\
    & = (\eval_X' \otimes \id_X \otimes \eval_{X}) \circ (\uppi_{\unitobj; X \otimes {}^* \! X} \otimes \id_X) \\
    & = (\id_X \otimes \eval_{X}) \circ ((\eval'_X)^* \otimes \id_X) \circ (\uppi_{\unitobj; \unitobj} \otimes \id_X) \\
    & = \upepsilon_{\unitobj} \otimes \id_X.
  \end{align*}
  Thus we have $\uprho_X^{r} = (\upepsilon_{\unitobj} \otimes \id_X) \circ \sigma_{X}^{-1}$.
\end{proof}

The coend $\mathsf{F} = \int^{X \in \mathcal{C}} X \otimes {}^* \! X$ has a canonical coalgebra structure. As explained in \cite{2015arXiv150401178S}, the adjoint algebra is the dual of the coalgebra $\mathsf{F}$. The category of $\mathsf{F}$-comodules in $\mathcal{C}$ has been studied by Lyubashenko \cite[Corollary 2.7.2]{MR1625495}. Translating his result into our context, we have the following description of the category of $\mathsf{A}$-modules in $\mathcal{C}$ (see \cite[Lemma 3.5]{2016arXiv160805905S} for an alternative proof).

\begin{lemma}
  \label{lem:adj-alg-mod}
  The following two functors are equivalences of categories\textup{:}
  \begin{align}
    \label{eq:adj-alg-mod-l}
    \mathcal{C} \boxtimes \mathcal{C} \to {}_\mathsf{A}\mathcal{C},
    \quad V \boxtimes W \mapsto (V \otimes W, \uprho^{\ell}_{V} \otimes \id_W), \\
    \label{eq:adj-alg-mod-r}
    \mathcal{C} \boxtimes \mathcal{C} \to \mathcal{C}_\mathsf{A},
    \quad V \boxtimes W \mapsto (V \otimes W, \id_V \otimes \uprho^r_{W}).
  \end{align}
\end{lemma}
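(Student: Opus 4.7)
The plan is to establish the right-module equivalence~\eqref{eq:adj-alg-mod-r} first, and then deduce the left-module version~\eqref{eq:adj-alg-mod-l} by duality. First I would check that the assignment $(V, W) \mapsto (V \otimes W, \id_V \otimes \uprho^{r}_W)$ defines a $k$-bilinear right-exact functor $\mathcal{C} \times \mathcal{C} \to \mathcal{C}_{\mathsf{A}}$. The module axioms are inherited from those for the right canonical action $\uprho^{r}_W$, $k$-linearity is immediate, and right exactness in each variable follows from the exactness of the tensor product of $\mathcal{C}$ together with the fact that the forgetful functor $\mathcal{C}_{\mathsf{A}} \to \mathcal{C}$ is exact and faithful. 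The universal property of the Deligne tensor product then yields a $k$-linear right-exact functor $\Phi_r: \mathcal{C} \boxtimes \mathcal{C} \to \mathcal{C}_{\mathsf{A}}$ realizing~\eqref{eq:adj-alg-mod-r}.

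To see that $\Phi_r$ is an equivalence, I would translate the statement to one about the coalgebra $\mathsf{F} = \int^{X \in \mathcal{C}} X \otimes {}^*\!X$ and invoke Lyubashenko \cite[Corollary~2.7.2]{MR1625495}. Since $\mathsf{A}$ and $\mathsf{F}$ are dual in the sense that ${}^*\mathsf{A} \cong \mathsf{F}$, the rigidity adjunction $\Hom_\mathcal{C}(M \otimes \mathsf{A}, M) \cong \Hom_\mathcal{C}(M, M \otimes \mathsf{F})$ gives a bijection between right $\mathsf{A}$-actions on $M$ and right $\mathsf{F}$-coactions on $M$; a routine check of the duality of the algebra and coalgebra structures upgrades this to an equivalence $\mathcal{C}_{\mathsf{A}} \simeq \{\text{right } \mathsf{F}\text{-comodules in } \mathcal{C}\}$. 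Lyubashenko's theorem identifies right $\mathsf{F}$-comodules in $\mathcal{C}$ with $\mathcal{C} \boxtimes \mathcal{C}$, sending $V \boxtimes W$ to $V \otimes W$ with coaction on the $W$-factor built from the universal dinatural transformation $\iota_X: X \otimes {}^*\!X \to \mathsf{F}$. The composite $\mathcal{C} \boxtimes \mathcal{C} \simeq \mathcal{C}_{\mathsf{A}}$ coincides with $\Phi_r$ because, under the duality $\mathsf{A} \leftrightarrow \mathsf{F}$, the dinatural $\iota_X$ corresponds to $\uppi_{\unitobj; X}: \mathsf{A} \to X \otimes X^*$, and this is precisely the ingredient used in Definition~\ref{def:can-act} to define $\uprho^{r}_W$.

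The left-module case~\eqref{eq:adj-alg-mod-l} then follows by composing $\Phi_r$ with the anti-equivalence ${}^*(-): \mathcal{C}_{\mathsf{A}} \to {}_{\mathsf{A}}\mathcal{C}$ of Subsection~2.3 and the involution $V \boxtimes W \mapsto {}^*\!W \boxtimes {}^*\!V$ on $\mathcal{C} \boxtimes \mathcal{C}$; the compatibility $\eval_X \circ (\id_{X^*} \otimes \uprho^{\ell}_X) = \eval_X \circ (\uprho^{r}_{X^*} \otimes \id_X)$ recorded just after Definition~\ref{def:can-act} is exactly what is needed to ensure that this composite is naturally isomorphic to~\eqref{eq:adj-alg-mod-l}. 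The main obstacle lies in the identification step in the previous paragraph: one must carefully track the universal properties underlying the duality $\mathsf{A} \leftrightarrow \mathsf{F}$ and the matching of structures in Lyubashenko's equivalence, translating between the coend-based description of $\mathsf{F}$ and the end-based description of $\mathsf{A}$, so as to verify that the resulting dictionary sends the canonical right action $\uprho^{r}$ to the coaction induced by $\iota$.
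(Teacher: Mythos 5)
Your proposal follows essentially the same route as the paper, which states this lemma without a written proof and simply cites Lyubashenko's result on $\mathsf{F}$-comodules \cite[Corollary 2.7.2]{MR1625495}, to be ``translated'' via the duality between the coend $\mathsf{F}$ and the adjoint algebra $\mathsf{A}$; your write-up is a reasonable elaboration of exactly that translation, together with the standard passage from right to left modules via the duality anti-equivalences. No discrepancy with the paper's intended argument.
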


\subsubsection*{Multiplicative characters}

Let $X \in \mathcal{C}$ be an object. Then the object $X \otimes X^*$ has a canonical structure of algebras in $\mathcal{C}$. It is easy to see that $\coev_X$ and $\uppi_{\unitobj; X}$ are morphisms of algebras in $\mathcal{C}$. Thus, if $\alpha \in \mathcal{C}$ is an invertible object, then
\begin{equation}
  \label{eq:def-chi-alpha}
  \upchi_{\alpha} := (\coev_{\alpha})^{-1} \circ \uppi_{\unitobj; \alpha}
  \in \Hom_{\mathcal{C}}(\mathsf{A}, \unitobj)
\end{equation}
is a morphism of algebras in $\mathcal{C}$. We call $\upchi_{\alpha}$ the {\em multiplicative character associated to the invertible object $\alpha \in \mathcal{C}$}.

Now let $\mathrm{Inv}(\mathcal{C})$ denote the set of isomorphism classes of invertible objects of $\mathcal{C}$. For two algebras $P$ and $Q$ in $\mathcal{C}$, we denote by $\mathrm{Alg}(P, Q)$ the set of algebra morphisms from $P$ to $Q$. Lemma~\ref{lem:mul-char} below has been proved in \cite{2016arXiv160805905S}. We include the proof for the sake of completeness.

\begin{lemma}
  \label{lem:mul-char}
  The following map is a well-defined bijection\textup{:}
  \begin{equation*}
    \upchi: \mathrm{Inv}(\mathcal{C}) \to \mathrm{Alg}(\mathsf{A}, \unitobj),
    \quad [\alpha] \mapsto \upchi_{\alpha}.
  \end{equation*}
\end{lemma}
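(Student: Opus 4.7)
My plan is to deduce the claim from the equivalence of Lemma~\ref{lem:adj-alg-mod}, translating algebra morphisms $\chi\colon\mathsf{A}\to\unitobj$ into simple $\mathsf{A}$-modules in $\mathcal{C}$ whose underlying object is $\unitobj$.

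First, for well-definedness, an isomorphism $f\colon\alpha\to\alpha'$ of invertible objects yields, by dinaturality of $\uppi_{\unitobj;-}$ applied to $f^{-1}$, the identity $\uppi_{\unitobj;\alpha}=(f^{-1}\otimes f^*)\circ\uppi_{\unitobj;\alpha'}$, while functoriality of left duals gives $\coev_{\alpha'}=(f\otimes(f^*)^{-1})\circ\coev_\alpha$; composing these produces $\upchi_\alpha=\upchi_{\alpha'}$. Since $\coev_\alpha$ (which is invertible because $\alpha$ is invertible) and $\uppi_{\unitobj;\alpha}$ are algebra morphisms in $\mathcal{C}$ by the discussion preceding~\eqref{eq:def-chi-alpha}, so is $\upchi_\alpha=\coev_\alpha^{-1}\circ\uppi_{\unitobj;\alpha}$.

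The crux of the argument is the following identification. For any $V\in\mathcal{C}$, unwinding $\uprho_V^\ell=(\id_V\otimes\eval_V)\circ(\uppi_{\unitobj;V}\otimes\id_V)$ and using the snake identity $(\eval_V\otimes\id_{V^*})\circ(\id_{V^*}\otimes\coev_V)=\id_{V^*}$ gives
\begin{equation*}
  (\uprho_V^\ell\otimes\id_{V^*})\circ(\id_{\mathsf{A}}\otimes\coev_V)=\uppi_{\unitobj;V}.
\end{equation*}
When $V=\alpha$ is invertible, this says exactly that $\coev_\alpha$ is an isomorphism of $\mathsf{A}$-modules between $(\unitobj,\upchi_\alpha)$ and $(\alpha\otimes\alpha^*,\uprho_\alpha^\ell\otimes\id_{\alpha^*})$. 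Under the equivalence of Lemma~\ref{lem:adj-alg-mod}, the latter module corresponds to $\alpha\boxtimes\alpha^*\in\mathcal{C}\boxtimes\mathcal{C}$.

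Injectivity follows, because $\upchi_\alpha=\upchi_\beta$ forces $\alpha\boxtimes\alpha^*\cong\beta\boxtimes\beta^*$ in $\mathcal{C}\boxtimes\mathcal{C}$, and simple objects of a Deligne tensor product of finite abelian categories are precisely the objects $X\boxtimes Y$ with $X,Y$ simple, and such an isomorphism determines each factor up to isomorphism. For surjectivity, any algebra morphism $\chi\colon\mathsf{A}\to\unitobj$ makes $(\unitobj,\chi)$ a simple object of ${}_{\mathsf{A}}\mathcal{C}$, which corresponds under Lemma~\ref{lem:adj-alg-mod} to some $V\boxtimes W$ with $V,W$ simple and $V\otimes W\cong\unitobj$; this forces $V$ and $W$ to be invertible with $W\cong V^*$. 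By Schur's lemma, any $\mathsf{A}$-linear isomorphism $(V\otimes V^*,\uprho_V^\ell\otimes\id_{V^*})\to(\unitobj,\chi)$ is a nonzero scalar multiple of $\coev_V^{-1}$, so the boxed identity above yields $\chi=\upchi_V$. The main obstacle is precisely the snake-identity computation that produces this bridge between the explicit formula for $\upchi_V$ and the left canonical action underlying Lemma~\ref{lem:adj-alg-mod}; once this is in place, everything reduces to the classification of simples in $\mathcal{C}\boxtimes\mathcal{C}$.
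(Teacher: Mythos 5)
Your proposal is correct and follows essentially the same route as the paper's proof: both reduce the statement to the classification of simple objects of $\mathcal{C}\boxtimes\mathcal{C}$ via the equivalence of Lemma~\ref{lem:adj-alg-mod}, identifying $(\unitobj,\upchi_\alpha)$ with $(\alpha\otimes\alpha^*,\uprho_\alpha^{\ell}\otimes\id_{\alpha^*})$ and using that $\alpha\boxtimes\alpha^*\cong\beta\boxtimes\beta^*$ forces $\alpha\cong\beta$. Your explicit snake-identity verification that $\coev_\alpha$ is the required $\mathsf{A}$-module isomorphism is a detail the paper asserts without proof, and it checks out.
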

\begin{proof}
  The well-definedness follows from the dinaturality of $\uppi$. To show the surjectivity, we remark that the set $\mathrm{Alg}(\mathsf{A}, \unitobj)$ is in bijection with the set of the isomorphism classes of left $\mathsf{A}$-modules in $\mathcal{C}$ whose underlying object is isomorphic to the unit object. Now let $(M, \rho)$ be such a left $\mathsf{A}$-module. Since $M \cong \unitobj$ in $\mathcal{C}$, $(M, \rho)$ is a simple object of ${}_{\mathsf{A}}\mathcal{C}$. Every simple object of $\mathcal{C} \boxtimes \mathcal{C}$ is of the form $V \boxtimes W$ for some simple objects $V, W \in \mathcal{C}$. Thus, by Lemma~\ref{lem:adj-alg-mod},
  \begin{equation*}
    (M, \rho) \cong (V \otimes W, \uprho_{V}^{\ell} \otimes \id_W)
  \end{equation*}
  for some simple objects $V, W \in \mathcal{C}$ such that $V \otimes W \cong \unitobj$ in $\mathcal{C}$. We may assume that $\alpha := V$ is an invertible object of $\mathcal{C}$ and $W \cong \alpha^*$. Then we have
  \begin{equation*}
    (M, \rho) \cong (\alpha \otimes \alpha^*, \uprho_{\alpha}^{\ell} \otimes \id_{\alpha^*}) \cong (\unitobj, \upchi_{\alpha})
  \end{equation*}
  as left $\mathsf{A}$-modules in $\mathcal{C}$. Therefore the map $[\alpha] \mapsto \upchi_{\alpha}$ is surjective. The injectivity of the map follows from the above argument and the fact that
  \begin{equation*}
    \text{$\alpha \boxtimes \alpha^* \cong \beta \boxtimes \beta^*$ in $\mathcal{C} \boxtimes \mathcal{C}$}
    \iff \text{$\alpha \cong \beta$ in $\mathcal{C}$}
  \end{equation*}
  for any two invertible objects $\alpha, \beta \in \mathcal{C}$.
\end{proof}

\subsubsection*{Bimodules over the adjoint algebra}

Using Lemma~\ref{lem:adj-alg-mod}, we also obtain the following description of the category ${}_{\mathsf{A}}\mathcal{C}_{\mathsf{A}}$ of $\mathsf{A}$-bimodules in $\mathcal{C}$.

\begin{lemma}
  \label{lem:adj-alg-bimod}
  ${}_{\mathsf{A}} \mathcal{C}_{\mathsf{A}} \approx \mathcal{C} \boxtimes \mathcal{C} \boxtimes \mathcal{C}$.
\end{lemma}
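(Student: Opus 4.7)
The plan is to iterate Lemma~\ref{lem:adj-alg-mod}, viewing an $\mathsf{A}$-bimodule as a left $\mathsf{A}$-module object in the category $\mathcal{C}_{\mathsf{A}}$ of right $\mathsf{A}$-modules. Concretely, I would proceed as follows.

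First, identify $\mathcal{C}_{\mathsf{A}} \approx \mathcal{C} \boxtimes \mathcal{C}$ through the equivalence \eqref{eq:adj-alg-mod-r}, and note that $\mathcal{C}_{\mathsf{A}}$ is a left $\mathcal{C}$-module category via $X \otimes (M, \rho) = (X \otimes M, \id_X \otimes \rho)$. Transporting this left $\mathcal{C}$-action along \eqref{eq:adj-alg-mod-r}, I would verify that it becomes action on the first tensor factor:
\begin{equation*}
  X \otimes (V \boxtimes W) = (X \otimes V) \boxtimes W
  \quad (X, V, W \in \mathcal{C}),
\end{equation*}
because $\id_X \otimes (\id_V \otimes \uprho^{r}_W) = \id_{X \otimes V} \otimes \uprho^{r}_W$. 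Hence \eqref{eq:adj-alg-mod-r} is an equivalence of left $\mathcal{C}$-module categories between $\mathcal{C}_{\mathsf{A}}$ and $\mathcal{C} \boxtimes \mathcal{C}$ with the $\mathcal{C}$-action on the left slot.

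Next, use the general fact that, for an algebra $B$ in $\mathcal{C}$ and finite abelian categories $\mathcal{M}$, $\mathcal{N}$ with $\mathcal{M}$ a left $\mathcal{C}$-module category, the canonical functor ${}_B\mathcal{M} \boxtimes \mathcal{N} \to {}_B(\mathcal{M} \boxtimes \mathcal{N})$ sending $(M, \rho) \boxtimes N$ to $(M \boxtimes N, \rho \boxtimes \id_N)$ is an equivalence (the action of $B$ is absorbed into the first factor, which is a consequence of the universal property of the Deligne tensor product together with the fact that $B \otimes (-) \boxtimes \id_{\mathcal{N}}$ and the monad $(B \otimes (-)) \boxtimes \id_{\mathcal{N}}$ coincide). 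Applied with $B = \mathsf{A}$, this gives
\begin{equation*}
  {}_{\mathsf{A}}(\mathcal{C} \boxtimes \mathcal{C}) \approx {}_{\mathsf{A}}\mathcal{C} \boxtimes \mathcal{C}.
\end{equation*}

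Finally, combining these steps,
\begin{equation*}
  {}_{\mathsf{A}}\mathcal{C}_{\mathsf{A}} \approx {}_{\mathsf{A}}(\mathcal{C} \boxtimes \mathcal{C})
  \approx {}_{\mathsf{A}}\mathcal{C} \boxtimes \mathcal{C}
  \approx (\mathcal{C} \boxtimes \mathcal{C}) \boxtimes \mathcal{C}
  = \mathcal{C} \boxtimes \mathcal{C} \boxtimes \mathcal{C},
\end{equation*}
where the last nontrivial equivalence is \eqref{eq:adj-alg-mod-l}. Chasing the equivalences, an explicit formula for the composite is
\begin{equation*}
  U \boxtimes V \boxtimes W \mapsto
  (U \otimes V \otimes W, \, \uprho^{\ell}_U \otimes \id_V \otimes \id_W, \, \id_U \otimes \id_V \otimes \uprho^{r}_W),
\end{equation*}
whose two actions commute because they act on disjoint tensor factors.

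The main obstacle is the intermediate step ${}_{\mathsf{A}}(\mathcal{C} \boxtimes \mathcal{C}) \approx {}_{\mathsf{A}}\mathcal{C} \boxtimes \mathcal{C}$, where one must check that forming Eilenberg--Moore categories of the monad $\mathsf{A} \otimes (-)$ commutes with the Deligne tensor product in the first slot. This is straightforward for objects, but establishing the equivalence on the level of morphisms requires invoking the exactness of $\boxtimes$ and the right exactness of the monad, and one should then appeal to a Barr--Beck type argument (or the density of simple tensor objects) to conclude that the canonical functor is an equivalence.
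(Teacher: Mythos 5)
Your argument is correct and is essentially the mirror image of the paper's own proof: the paper promotes the equivalence \eqref{eq:adj-alg-mod-l} to an equivalence of right $\mathcal{C}$-module categories and then passes to right $\mathsf{A}$-modules, using the identification $(\mathcal{C}\boxtimes\mathcal{C})_{\mathsf{A}} = \mathcal{C}\boxtimes\mathcal{C}_{\mathsf{A}}$, whereas you promote \eqref{eq:adj-alg-mod-r} to an equivalence of left $\mathcal{C}$-module categories and pass to left $\mathsf{A}$-modules. The compatibility of Eilenberg--Moore categories with the Deligne tensor product that you flag as the main obstacle is precisely the step the paper records as an equality without further comment, and your explicit formula for the composite agrees with the one the paper extracts in \eqref{eq:adj-alg-bimod}.
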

\begin{proof}
  Let $F: \mathcal{C} \boxtimes \mathcal{C} \to {}_{\mathsf{A}}\mathcal{C}$ and $G: \mathcal{C} \boxtimes \mathcal{C} \to \mathcal{C}_{\mathsf{A}}$ denote the equivalences \eqref{eq:adj-alg-mod-l} and~\eqref{eq:adj-alg-mod-r}, respectively. We note that ${}_{\mathsf{A}}\mathcal{C}$ is a right $\mathcal{C}$-module category such that the category of right $\mathsf{A}$-modules in ${}_{\mathsf{A}}\mathcal{C}$ is precisely the category ${}_{\mathsf{A}}\mathcal{C}_{\mathsf{A}}$. If we regard $\mathcal{C} \boxtimes \mathcal{C}$ as a right $\mathcal{C}$-module category by
  \begin{equation*}
    (V \boxtimes W) \otimes X = V \boxtimes (W \otimes X) \quad (V, W, X \in \mathcal{C}),
  \end{equation*}
  then $F$ is in fact an equivalence of right $\mathcal{C}$-module categories. Thus it induces an equivalence $F_{\mathsf{A}}: (\mathcal{C} \boxtimes \mathcal{C})_{\mathsf{A}} \to {}_{\mathsf{A}}\mathcal{C}_{\mathsf{A}}$ between the categories of right $\mathsf{A}$-modules. We use this observation to establish the desired equivalence:
  \begin{equation*}
    \mathcal{C} \boxtimes \mathcal{C} \boxtimes \mathcal{C}
    \xrightarrow[\quad \id \boxtimes G \quad]{\approx}
    \mathcal{C} \boxtimes \mathcal{C}_{\mathsf{A}}
    = (\mathcal{C} \boxtimes \mathcal{C})_{\mathsf{A}}
    \xrightarrow[\quad F_{\mathsf{A}} \quad]{\approx}
    {}_{\mathsf{A}}\mathcal{C}_{\mathsf{A}}.
    \qedhere
  \end{equation*}
\end{proof}

The proof of this lemma shows that the functor $\mathcal{C} \boxtimes \mathcal{C} \boxtimes \mathcal{C} \to {}_{\mathsf{A}}\mathcal{C}_{\mathsf{A}}$ given by
\begin{equation}
  \label{eq:adj-alg-bimod}
  V \boxtimes W \boxtimes X \mapsto
  (V \otimes W \otimes X, \uprho_{V}^{\ell} \otimes \id_W \otimes \id_X, \id_V \otimes \id_{W} \otimes \uprho_{X}^{r})
\end{equation}
is an equivalence of categories. It does not seem to be known an explicit description of a quasi-inverse of this functor. For later use, we compute the object of $\mathcal{C} \boxtimes \mathcal{C} \boxtimes \mathcal{C}$ corresponding to particular $\mathsf{A}$-bimodules.

\begin{lemma}
  \label{lem:adj-alg-bimod-1}
  Let $V \in \mathcal{C}$ be an object. Then the object $\mathsf{Z}(V)$ is an $\mathsf{A}$-bimodule by the monoidal structure of $\mathsf{Z}$. This bimodule corresponds to
  \begin{equation*}
    \int^{X \in \mathcal{C}} X \boxtimes V \boxtimes X^*
    \in \mathcal{C} \boxtimes \mathcal{C} \boxtimes \mathcal{C}
  \end{equation*}
  via the equivalence~\eqref{eq:adj-alg-bimod}.
\end{lemma}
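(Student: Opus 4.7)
The plan is to verify the isomorphism through the equivalence $F$ of~\eqref{eq:adj-alg-bimod}. Since $F$ is an equivalence of $k$-linear categories, it preserves all colimits, so
\[
  F\Bigl(\int^{X} X\boxtimes V\boxtimes X^*\Bigr) \cong \int^{X} F(X\boxtimes V\boxtimes X^*)
\]
in ${}_\mathsf{A}\mathcal{C}_\mathsf{A}$. Therefore it suffices to show that $(\mathsf{Z}(V),\mathsf{m}_{\unitobj,V},\mathsf{m}_{V,\unitobj})$ is the coend, in ${}_\mathsf{A}\mathcal{C}_\mathsf{A}$, of the functor $X \mapsto F(X\boxtimes V\boxtimes X^*)$.

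To exhibit $(\mathsf{Z}(V),\mathsf{m})$ as this coend, I would construct a dinatural transformation $j_X: F(X\boxtimes V\boxtimes X^*) \to (\mathsf{Z}(V),\mathsf{m})$ in ${}_\mathsf{A}\mathcal{C}_\mathsf{A}$ and verify its universality. The underlying morphism $j_X: X\otimes V\otimes X^*\to\mathsf{Z}(V)$ in $\mathcal{C}$ is specified, via Lemma~\ref{lem:Hom-V-ZW}, by a natural transformation
\[
  (\widetilde{j_X})_Y: X\otimes V\otimes X^*\otimes Y\longrightarrow Y\otimes V
\]
built from the canonical half-braiding on $\mathsf{Z}(V)$ together with $\uppi_{V;X}$. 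Dinaturality in $X$ and compatibility with the $\mathsf{A}$-bimodule structures---namely $\uprho_X^{\ell}\otimes\id_V\otimes\id_{X^*}$ and $\id_X\otimes\id_V\otimes\uprho_{X^*}^r$ on the source, and $\mathsf{m}_{\unitobj,V}$ and $\mathsf{m}_{V,\unitobj}$ on the target---are then direct computations using~\eqref{eq:Z-def-mult} and Definition~\ref{def:can-act}.

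For the universality, given a dinatural family $\omega_X: F(X\boxtimes V\boxtimes X^*)\to M$ of bimodule morphisms into any $\mathsf{A}$-bimodule $M$, the unique factorization through $(\mathsf{Z}(V),\mathsf{m})$ is obtained by converting between end and coend presentations: by Lemma~\ref{lem:EW-calc}, the coend $\int^{X} X\boxtimes V\boxtimes X^*$ and the end $\int_{X} X\boxtimes V\boxtimes X^*$ in $\mathcal{C}\boxtimes\mathcal{C}\boxtimes\mathcal{C}$ correspond via Eilenberg--Watts to the same right exact functor, and under $F$ this identification matches the end $\int_X X\otimes V\otimes X^* = \mathsf{Z}(V)$ (which furnishes the projections $\uppi_{V;X}$) with the coend presentation. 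The main obstacle is the bookkeeping: explicitly checking that the bimodule structure inherited from the coend matches $(\mathsf{m}_{\unitobj,V}, \mathsf{m}_{V,\unitobj})$, which reduces to a diagram chase using~\eqref{eq:Z-def-mult} and the definitions of $\uprho^{\ell}$ and $\uprho^r$.
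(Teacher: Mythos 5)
There is a genuine gap, and it stems from taking the coend symbol in the statement at face value. The paper's own proof (and its later use in the proof of Theorem~\ref{thm:integ-mod-ft}) makes clear that the intended object is the \emph{end} $\int_{X \in \mathcal{C}} X \boxtimes V \boxtimes X^*$; the superscript in the lemma as printed is a typo. Accordingly, the correct argument exhibits $\mathsf{Z}(V)$ as an end in ${}_{\mathsf{A}}\mathcal{C}_{\mathsf{A}}$: one checks that the canonical projections $\uppi_{V;X} \colon \mathsf{Z}(V) \to X \otimes V \otimes X^*$ are morphisms of $\mathsf{A}$-bimodules, hence form a wedge $\mathsf{Z}(V) \dinto \Psi$, and then verifies the universal property by lifting the universal property of the end in $\mathcal{C}$ (the substantive step being that the induced factorization $\psi \colon M \to \mathsf{Z}(V)$ is again a bimodule map, which follows from \eqref{eq:Z-def-mult} and the joint monicity of the family $\uppi_{V;X}$). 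Your proposal instead tries to build a cowedge $j_X \colon X \otimes V \otimes X^* \to \mathsf{Z}(V)$, but no such canonical family exists: the structure maps of $\mathsf{Z}(V)$ go in the opposite direction, and your description of $j_X$ (``built from the canonical half-braiding together with $\uppi_{V;X}$'') does not produce a morphism into $\mathsf{Z}(V)$, since $\uppi_{V;X}$ points out of it. This step cannot be repaired without changing the shape of the argument.

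The universality step is also unsound as written. The claim that the end and the coend of $X \mapsto X \boxtimes V \boxtimes X^*$ ``correspond via Eilenberg--Watts to the same right exact functor'' has no support in Lemma~\ref{lem:EW-calc}, which only identifies right exact functors with certain \emph{ends}; and the two objects are genuinely different in general (already for $V = \unitobj$, the end gives the adjoint algebra $\mathsf{A} = \int_X X \otimes X^*$, whereas the corresponding coend is the dual-type object $\mathsf{F}$, and these are not isomorphic unless $\mathcal{C}$ has special properties). Once you replace the coend by the end and the cowedge by the wedge $\{\uppi_{V;X}\}$, your overall strategy --- transport the (co)limit through the equivalence \eqref{eq:adj-alg-bimod} and identify $\mathsf{Z}(V)$ with the universal object in ${}_{\mathsf{A}}\mathcal{C}_{\mathsf{A}}$ --- coincides with the paper's, but the two computations you defer (that each $\uppi_{V;X}$ is a bimodule map, and that the factorization $\psi$ is one too) are exactly where the content of the proof lies and must be carried out.
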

\begin{proof}
  Let $\widetilde{\Psi}: \mathcal{C} \boxtimes \mathcal{C} \boxtimes \mathcal{C} \to {}_{\mathsf{A}}\mathcal{C}_{\mathsf{A}}$ denote the equivalence given by~\eqref{eq:adj-alg-bimod}. We fix an object $V \in \mathcal{C}$ and then define
  \begin{equation*}
    \Psi: \mathcal{C}^{\op} \times \mathcal{C}
    \to {}_{\mathsf{A}}\mathcal{C}_{\mathsf{A}},
    \quad (X_1, X_2) \mapsto \widetilde{\Psi}(X_2 \boxtimes V \boxtimes X_1^*).
  \end{equation*}
  It is easy to see that $q_X := \uppi_{V;X}: \mathsf{Z}(V) \to \Psi(X,X)$ is a morphism of $\mathsf{A}$-bimodules for all $X \in \mathcal{C}$. Thus $q = \{ q_X \}_{X \in \mathcal{C}}$ is a dinatural transformation $\mathsf{Z}(V) \dinto \Psi$. Now let $\omega: M \dinto \Psi$ be a dinatural transformation from $M \in {}_{\mathsf{A}}\mathcal{C}_{\mathsf{A}}$ to $\Psi$. Then, by the universal property of $\mathsf{Z}(V) \in \mathcal{C}$, there is a unique morphism $\psi: M \to \mathsf{Z}(V)$ in $\mathcal{C}$ such that $q_X \circ \psi = \omega_X$ for all $X \in \mathcal{C}$. The morphism $\psi$ belongs to ${}_{\mathsf{A}}\mathcal{C}_{\mathsf{A}}$. Indeed, if we denote by $\rho: \mathsf{A} \otimes M \to M$ the left action of $\mathsf{A}$ on $M$, then we have
  \begin{align*}
    \uppi_{V; X} \circ \psi \circ \rho
    & = \omega_X \circ \rho \\
    & = (\uprho_X^{\ell} \otimes \id_V \otimes \id_{X^*}) \circ \omega_X \\
    & = (\id_{X} \otimes \eval_X \otimes \id_{V} \otimes \id_{X^*})
      \circ (\uppi_{\unitobj; X} \otimes \uppi_{V; X})
      \circ (\id_{\mathsf{A}} \otimes \psi) \\
    & = \uppi_{V; X} \circ \mathsf{m}_{\unitobj, V} \circ (\id_{\mathsf{A}} \otimes \psi).
  \end{align*}
  This implies $\psi \circ \rho = \mathsf{m}_{\unitobj, V} \circ (\id_{\mathsf{A}} \otimes \psi)$, {\it i.e.}, $\psi$ is a morphism of left $\mathsf{A}$-modules. In a similar way, we can verify that $\psi$ is a morphism of right $\mathsf{A}$-modules.

  By the above argument, we conclude that $\mathsf{Z}(V) \in {}_{\mathsf{A}}\mathcal{C}_{\mathsf{A}}$ is an end of the functor $\Psi$. Since a category equivalence preserves ends, we have
  \begin{equation*}
    \widetilde{\Psi} \left(\int_{X \in \mathcal{C}} X \boxtimes V \boxtimes X^*\right)
    \cong \int_{X \in \mathcal{C}} \widetilde{\Psi}(X \boxtimes V \boxtimes X^*)
    \cong \int_{X \in \mathcal{C}} \Psi(X,X)
    \cong \mathsf{Z}(V)
  \end{equation*}
  as $\mathsf{A}$-bimodules. The claim is proved.
\end{proof}

In the proof of Lemma~\ref{lem:mul-char}, we have observed that the left $\mathsf{A}$-module $(\unitobj, \upchi_{\alpha})$ for an invertible object $\alpha \in \mathcal{C}$ corresponds to $\alpha \boxtimes \alpha^* \in \mathcal{C} \boxtimes \mathcal{C}$ via the equivalence given by \eqref{eq:adj-alg-mod-l}. The following lemma can be proved in a similar way:

\begin{lemma}
  \label{lem:adj-alg-bimod-2}
  Given invertible objects $\alpha, \beta \in \mathcal{C}$, we denote by ${}_{\alpha} \unitobj_{\beta}$ the $\mathsf{A}$-bimodule whose underlying object is $\unitobj$ with the left and the right action given by $\upchi_{\alpha}$ and $\upchi_{\beta}$, respectively. The $\mathsf{A}$-bimodule ${}_{\alpha}\unitobj_{\beta}$ corresponds to
  \begin{equation*}
    \alpha \boxtimes (\alpha^* \otimes \beta) \boxtimes \beta^*
    \in \mathcal{C} \boxtimes \mathcal{C} \boxtimes \mathcal{C}
  \end{equation*}
  via the equivalence~\eqref{eq:adj-alg-bimod}.
\end{lemma}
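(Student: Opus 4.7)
The plan is to compute the image of $\alpha \boxtimes (\alpha^* \otimes \beta) \boxtimes \beta^*$ under the equivalence~\eqref{eq:adj-alg-bimod} in its explicit form, and then exhibit an isomorphism of $\mathsf{A}$-bimodules between this image and ${}_{\alpha}\unitobj_{\beta}$. By the formula~\eqref{eq:adj-alg-bimod}, the image has underlying object $\alpha \otimes \alpha^* \otimes \beta \otimes \beta^*$, with left $\mathsf{A}$-action $\uprho_{\alpha}^{\ell} \otimes \id_{\alpha^* \otimes \beta \otimes \beta^*}$ acting only on the first tensor factor, and right $\mathsf{A}$-action $\id_{\alpha \otimes \alpha^* \otimes \beta} \otimes \uprho_{\beta^*}^{r}$ acting only on the last factor.

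Next I would invoke the argument in the proof of Lemma~\ref{lem:mul-char}, which already furnishes an isomorphism $i_{\alpha}: (\alpha \otimes \alpha^*, \uprho_{\alpha}^{\ell} \otimes \id_{\alpha^*}) \to (\unitobj, \upchi_{\alpha})$ of left $\mathsf{A}$-modules. Running the symmetric argument---using the equivalence~\eqref{eq:adj-alg-mod-r} in place of~\eqref{eq:adj-alg-mod-l} and classifying simple right $\mathsf{A}$-modules whose underlying object is isomorphic to $\unitobj$---yields an analogous isomorphism $i_{\beta}: (\beta \otimes \beta^*, \id_{\beta} \otimes \uprho_{\beta^*}^{r}) \to (\unitobj, \upchi_{\beta})$ of right $\mathsf{A}$-modules.

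Finally, I set $\varphi = i_{\alpha} \otimes i_{\beta}: \alpha \otimes \alpha^* \otimes \beta \otimes \beta^* \to \unitobj \otimes \unitobj = \unitobj$, which is manifestly an isomorphism in $\mathcal{C}$. To check that $\varphi$ is a morphism of $\mathsf{A}$-bimodules into ${}_{\alpha}\unitobj_{\beta}$, the key observation is that the left and right actions on the source involve disjoint tensor factors. Compatibility with left actions therefore decouples as
\[
(i_{\alpha} \otimes i_{\beta}) \circ (\uprho_{\alpha}^{\ell} \otimes \id)
= \bigl(i_{\alpha} \circ (\uprho_{\alpha}^{\ell} \otimes \id_{\alpha^*})\bigr) \otimes i_{\beta}
= \upchi_{\alpha} \circ (\id_{\mathsf{A}} \otimes \varphi),
\]
using the intertwining property of $i_{\alpha}$, and right-action compatibility is entirely analogous via $i_{\beta}$. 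The only point that requires any setup beyond what is already in the text is the right-module version of Lemma~\ref{lem:mul-char}, but this is obtained by a word-for-word dualization of its proof, so I expect no real obstacle.
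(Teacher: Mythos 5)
Your proposal is correct and is essentially the argument the paper intends: the paper's own ``proof'' is only the remark that the lemma ``can be proved in a similar way'' as the left-module observation from Lemma~\ref{lem:mul-char}, and you supply the missing details in the natural way. The one step that is \emph{not} quite a word-for-word dualization is the identification of the character of the right $\mathsf{A}$-module $(\beta \otimes \beta^*, \id_{\beta} \otimes \uprho^{r}_{\beta^*})$: the classification via \eqref{eq:adj-alg-mod-r} only tells you that it is $(\unitobj, \upchi_{\gamma})$ for \emph{some} invertible $\gamma$, and since $\upchi$ is defined by the left-handed formula \eqref{eq:def-chi-alpha}, you must rule out possibilities such as $\gamma \cong \beta^*$ --- getting this wrong would change the third tensor factor in the statement of the lemma. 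It does come out as $\upchi_{\beta}$: the defining formula for $\uprho^{r}_{\beta^*}$ uses the component of the universal dinatural transformation at ${}^*(\beta^*) = \beta$, namely $\uppi_{\unitobj;\beta}\colon \mathsf{A} \to \beta \otimes \beta^*$, so transporting the action along $\coev_{\beta}^{-1}$ collapses, by a zigzag identity, to a scalar multiple of $\coev_{\beta}^{-1} \circ \uppi_{\unitobj;\beta} = \upchi_{\beta}$, and unitality of the action forces the scalar to be $1$. With that point supplied, your decoupling computation for $\varphi = i_{\alpha} \otimes i_{\beta}$ goes through exactly as written.
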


\subsection{Summary of notations}
\label{subsec:notations}

For reader's convenience, we append a summary of notations introduced in this section. Given a finite tensor category $\mathcal{C}$, we denote by $\mathcal{Z}(\mathcal{C})$ the Drinfeld center of $\mathcal{C}$. The forgetful functor $\mathsf{U}: \mathcal{Z}(\mathcal{C}) \to \mathcal{C}$ has a right adjoint functor, say $\mathsf{R}$. We denote by
\begin{equation*}
  \mathsf{m}_{V,W}: \mathsf{R}(V) \otimes \mathsf{R}(W) \to \mathsf{R}(V \otimes W)
  \quad \text{and} \quad
  \mathsf{u}: \unitobj \to \mathsf{R}(\unitobj)
  \quad (V, W \in \mathcal{C})
\end{equation*}
the monoidal structure of $\mathsf{R}$.

The functor $\mathsf{Z}: \mathcal{C} \to \mathcal{C}$ given by~\eqref{eq:Z-end} is a Hopf comonad on $\mathcal{C}$, which we call the central Hopf comonad (Definition~\ref{def:Hopf-comonad-Z}). The structure morphisms of $\mathsf{Z}$ are expressed in terms of the universal dinatural transformation
\begin{equation*}
  \uppi_{V; X}: \mathsf{Z}(V) \to X \otimes V \otimes X^*
  \quad (V, X \in \mathcal{C}).
\end{equation*}
The comonad structure of $\mathsf{Z}$ is denoted by
\begin{equation*}
  \updelta_{V}: \mathsf{Z}(V) \to \mathsf{Z}^2(V)
  \quad \text{and} \quad
  \upepsilon_{V}: \mathsf{Z}(V) \to V
  \quad (V \in \mathcal{C})
\end{equation*}
The category $\lcomod{\mathsf{Z}}$ of $\mathsf{Z}$-comodules can be identified with $\mathcal{Z}(\mathcal{C})$. If we identify them, $\mathsf{R}$ is identified with the free $\mathsf{Z}$-comodule functor \eqref{eq:free-Z-comod}. Since then $\mathsf{Z} = \mathsf{U R}$, the monoidal structure of $\mathsf{Z}$ are denoted by the same symbols as $\mathsf{R}$.

The object $\mathsf{A} := \mathsf{U R}(\unitobj)$ is an algebra in $\mathcal{C}$ with multiplication $m_{\unitobj, \unitobj}: \mathsf{A} \otimes \mathsf{A} \to \mathsf{A}$ and unit $\mathsf{u}: \unitobj \to \mathsf{A}$. We call $\mathsf{A}$ the {\em adjoint algebra} of $\mathcal{C}$ (Definition~\ref{def:adj-alg}). The algebra $\mathsf{A}$ acts on every object $X \in \mathcal{C}$ from both sides by the canonical actions
\begin{equation*}
  \uprho_X^{\ell}: \mathsf{A} \otimes X \to X
  \quad \text{and} \quad
  \uprho_X^{r}: X \otimes \mathsf{A} \to X
\end{equation*}
(Definition~\ref{def:can-act}). These notions have been used to establish several category equivalences; see Lemma~\ref{lem:adj-alg-mod} and~\ref{lem:adj-alg-bimod}.

\begin{example}[the case of Hopf algebras]
  \label{ex:adj-Hopf-alg}
  Let $H$ be a finite-dimensional Hopf algebra over $k$ with comultiplication $\Delta$, counit $\varepsilon$ and antipode $S$. We will use the Sweedler notation, such as $\Delta(h) = h_{(1)} \otimes h_{(2)}$ and
  \begin{equation*}
    \Delta(h_{(1)}) \otimes h_{(2)}
    = h_{(1)} \otimes h_{(2)} \otimes h_{(3)}
    = h_{(1)} \otimes \Delta(h_{(2)}),
  \end{equation*}
  to express the comultiplication of $h \in H$. We conclude this section by explaining what the above categorical notions indicate if $\mathcal{C} = \lmod{H}$ (see also \cite{2015arXiv150401178S}).

  We recall that a Yetter-Drinfeld $H$-module is a left $H$-module $M$ endowed with a left $H$-comodule structure $m \mapsto m_{(-1)} \otimes m_{(0)}$ satisfying
  \begin{equation*}
    (h \cdot m)_{(-1)} \otimes (h \cdot m)_{(0)}
    = h_{(1)} m_{(-1)} S(h_{(3)}) \otimes h_{(2)} m_{(0)}
  \end{equation*}
  for all $h \in H$ and $m \in M$. As is well-known, $\mathcal{Z}(\mathcal{C})$ for $\mathcal{C} = \lmod{H}$ can be identified with the category ${}^H_H \mathcal{YD}_f$ of finite-dimensional Yetter-Drinfeld $H$-modules. Under this identification, a right adjoint $\mathsf{R}$ of $\mathsf{U}$ is given as follows: As a vector space, it is given by $\mathsf{R}(V) = H \otimes_k V$ for $V \in \lmod{H}$. The left action $\triangleright$ and the left coaction of $H$ on the vector space $\mathsf{R}(V)$ are given respectively by
  \begin{equation}
    \label{eq:R-ind-action}
    h \triangleright (a \otimes v) = h_{(1)} a S(h_{(3)}) \otimes h_{(2)} v
    \quad \text{and} \quad
    a \otimes v \mapsto a_{(1)} \otimes a_{(2)} \otimes v
  \end{equation}
  for $a, h \in H$ and $v \in V$. The monoidal structure
  \begin{equation*}
    \mathsf{m}_{V,W}: \mathsf{R}(V) \otimes_k \mathsf{R}(W) \to \mathsf{R}(V \otimes_k W)
    \quad \text{and} \quad \mathsf{u}: k \to \mathsf{R}(k)
  \end{equation*}
  for $V, W \in \lmod{H}$ are given respectively by
  \begin{equation}
    \label{eq:R-ind-mult}
    \mathsf{m}_{V,W}((a \otimes v) \otimes (b \otimes w))
    = a b \otimes (v \otimes w)
    \quad \text{and} \quad
    \mathsf{u}(c) = 1_H \otimes c,
  \end{equation}
  for $a, b \in H$, $v \in V$, $w \in W$ and $c \in k$.

  The functor $\mathsf{Z} = \mathsf{U} \mathsf{R}$ has a structure of a Hopf comonad on $\mathcal{C}$. The comultiplication $\updelta_V: \mathsf{Z}(V) \to \mathsf{Z}^2(V)$ and the counit $\upepsilon_V: \mathsf{Z}(V) \to V$ of $\mathsf{Z}$ are determined by the unit and the counit of the adjunction $\mathsf{U} \dashv \mathsf{R}$. Explicitly, the natural transformations $\updelta$ and $\upepsilon$ are given respectively by
  \begin{align}
    \label{eq:R-ind-comult}
    \updelta_V(a \otimes v) = a_{(1)} \otimes a_{(2)} \otimes v
    \quad \text{and} \quad
    \upepsilon_V(a \otimes v) = \varepsilon(a) v,
  \end{align}
  for $a \in H$ and $v \in V \in \lmod{H}$. The monoidal structure of $\mathsf{Z}$ are the same natural transformations as $\mathsf{R}$.

  The adjoint algebra of $\lmod{H}$ is the vector space $\mathsf{A} := H$ with the adjoint action given by $h \triangleright a = h_{(1)} a S(h_{(2)})$ for $h \in H$ and $a \in \mathsf{A}$. The algebra structure of $\mathsf{A}$ is the same as the ordinary algebra $H$. The canonical half-braiding $\sigma_X: \mathsf{A} \otimes X \to X \otimes \mathsf{A}$ for $\mathsf{A}$ and its inverse are given respectively by
  \begin{equation*}
    \sigma_X(a \otimes x) = a_{(1)} x \otimes a_{(2)}
    \quad \text{and} \quad \sigma_X^{-1}(x \otimes a) = a_{(2)} \otimes S^{-1}(a_{(1)}) x
  \end{equation*}
  for $a \in \mathsf{A}$ and $x \in X \in \lmod{H}$. Thus, by Lemma~\ref{lem:adj-alg-mod}, we have
  \begin{equation}
    \uprho^{\ell}_X(a \otimes x) = a x
    \quad \text{and} \quad
    \uprho^r_X(x \otimes a) = S^{-1}(a) x.
  \end{equation}
  for $a \in \mathsf{A}$ and $x \in X \in \lmod{H}$.
\end{example}

\section{Integrals for finite tensor categories}
\label{sec:integ-ftc}

\subsection{Categorical analogue of the modular function}

Till the end of this section, we fix a finite tensor category $\mathcal{C}$ over $k$ and use the set of notations summarized in Subsection~\ref{subsec:notations}. The following lemma is a key observation for defining integrals and cointegrals of $\mathcal{C}$.

\begin{lemma}
  \label{lem:alpha-def}
  There is a unique \textup{(}up to isomorphism\textup{)} simple object $\upalpha \in \mathcal{C}$ such that
  \begin{equation}
    \label{eq:alpha-def}
    \Hom_{\mathcal{Z}(\mathcal{C})}(\mathsf{R}(\upalpha), \unitobj) \ne 0.
  \end{equation}
  The object $\upalpha$ is invertible. Moreover, there is a natural isomorphism
  \begin{equation}
    \label{eq:alpha-and-R}
    \Hom_{\mathcal{Z}(\mathcal{C})}(\mathsf{R}(X), \unitobj) \cong \Hom_{\mathcal{C}}(X, \upalpha)
    \quad (X \in \mathcal{C}).
  \end{equation}
\end{lemma}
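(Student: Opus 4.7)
The plan is to define $\upalpha := \mathsf{R}'(\unitobj)$, where $\mathsf{R}'$ is a right adjoint of $\mathsf{R}$. With this choice, the natural isomorphism \eqref{eq:alpha-and-R} is precisely the adjunction, and \eqref{eq:alpha-def} follows since $\id_\upalpha$ corresponds under the adjunction to a nonzero element of the right-hand side. It then remains to construct $\mathsf{R}'$, to prove $\upalpha$ is invertible (hence simple), and to deduce uniqueness.

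Existence of $\mathsf{R}'$ reduces to showing $\mathsf{R}$ is exact, since any such $k$-linear functor between finite abelian categories automatically admits a right adjoint. Left exactness is immediate from $\mathsf{U} \dashv \mathsf{R}$. For right exactness, the Eilenberg--Watts calculus (Lemma~\ref{lem:EW-calc}) identifies $\mathsf{Z}(V) = \int_{X} X \otimes V \otimes X^*$ with the image of the exact assignment $V \mapsto (X \mapsto X \otimes V)$ under the composite of exact functors
\[
\REX(\mathcal{C},\mathcal{C}) \xrightarrow{\ \approx\ } \mathcal{C}^{\op} \boxtimes \mathcal{C} \xrightarrow{\ \id \boxtimes (-)^*\ } \mathcal{C} \boxtimes \mathcal{C} \xrightarrow{\ \otimes\ } \mathcal{C};
\]
hence $\mathsf{Z}$, and therefore $\mathsf{R}$, is exact, and $\mathsf{R}'$ exists.

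The main technical step is the invertibility of $\upalpha$, which I would establish via the bimodule description of the adjoint algebra. Let $\lambda : \mathsf{R}(\upalpha) \to \unitobj$ be the morphism in $\mathcal{Z}(\mathcal{C})$ corresponding to $\id_\upalpha$ under the adjunction. Applying $\mathsf{U}$ and using $\mathcal{Z}(\mathcal{C}) \approx \lcomod{\mathsf{Z}}$, one extracts from $\lambda$ an $\mathsf{A}$-bimodule-theoretic datum relating $\mathsf{Z}(\upalpha)$ (described as $\int^{Y} Y \boxtimes \upalpha \boxtimes Y^*$ by Lemma~\ref{lem:adj-alg-bimod-1}) to a simple $\mathsf{A}$-bimodule with underlying object $\unitobj$. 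Since Lemma~\ref{lem:adj-alg-bimod-2} classifies such bimodules as ${}_{\upbeta}\unitobj_{\upgamma}$ for invertible $\upbeta, \upgamma \in \mathcal{C}$ parametrized by multiplicative characters (Lemma~\ref{lem:mul-char}), this pins down $\upalpha$ as an invertible object.

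Once invertibility is established, simplicity is automatic (any invertible object of a finite tensor category is simple, because $\unitobj$ is), and uniqueness follows from \eqref{eq:alpha-and-R}: any simple $\upbeta$ satisfying \eqref{eq:alpha-def} yields $\Hom_{\mathcal{C}}(\upbeta, \upalpha) \neq 0$, forcing $\upbeta \cong \upalpha$ by Schur's lemma since $k$ is algebraically closed. The main obstacle is the invertibility step: the construction of $\mathsf{R}'$ and the formal deductions are routine, but cleanly extracting invertibility of $\upalpha$ from the bimodule analysis of the adjoint algebra is where the real work lies.
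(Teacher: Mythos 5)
Your formal skeleton is sound and is genuinely different from the paper's route: you define $\upalpha := \mathsf{R}'(\unitobj)$ for a right adjoint $\mathsf{R}'$ of $\mathsf{R}$ (which exists because $\mathsf{Z}=\mathsf{U}\mathsf{R}$ is exact by the Eilenberg--Watts argument and $\mathsf{U}$ is faithful exact, hence reflects exactness), so that \eqref{eq:alpha-and-R} is the adjunction isomorphism and uniqueness follows from Schur's lemma. The paper instead takes $\upalpha := D^*$, where $D$ is the distinguished invertible object of Etingof--Nikshych--Ostrik \cite{MR2097289}, and uses the fact that $X \mapsto \mathsf{R}(D^* \otimes X)$ is a \emph{left} adjoint of $\mathsf{U}$ to compute $\Hom_{\mathcal{Z}(\mathcal{C})}(\mathsf{R}(X),\unitobj) \cong \Hom_{\mathcal{C}}(X, D^*)$; there, invertibility and nonvanishing of $\upalpha$ come for free from the cited result.

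The genuine gap is precisely the step you flag as ``the real work,'' and your sketch does not close it. Two specific problems. First, even \eqref{eq:alpha-def} is not yet established: ``$\id_\upalpha$ corresponds to a nonzero element'' presupposes $\upalpha \neq 0$, i.e., that $\unitobj$ is a quotient of some $\mathsf{R}(X)$, which needs an argument. Second, and more seriously, the bimodule mechanism you describe does not run as stated. The morphism $\mathsf{U}(\lambda)\colon \mathsf{Z}(\upalpha) \to \unitobj$ is a morphism of $\mathsf{Z}$-comodules (it is exactly a categorical cointegral in the sense of Definition~\ref{def:integ-FTCs}), but it is \emph{not} a morphism of $\mathsf{A}$-bimodules onto any ${}_{\beta}\unitobj_{\gamma}$: already for $\mathcal{C}=\lmod{H}$ with $H=kG$, the cointegral $\lambda(g)=\delta_{g,1}$ satisfies no identity of the form $\lambda(ab)=\chi(a)\lambda(b)$, so its kernel is not a sub-bimodule and Lemmas~\ref{lem:mul-char} and~\ref{lem:adj-alg-bimod-2} do not apply to it. Moreover, even given a bimodule morphism, you would be computing a Hom-space \emph{out of} $\mathsf{Z}(\upalpha)=\int_{X} X\boxtimes\upalpha\boxtimes X^*$, i.e., out of an end, which does not decompose the way $\Hom_{\mathsf{A}|\mathsf{A}}({}_{\beta}\unitobj_{\upepsilon}, \mathsf{Z}(\upalpha))$ does in the proof of Theorem~\ref{thm:integ-mod-ft}; and note that those bimodule computations in the paper already \emph{use} Lemma~\ref{lem:alpha-def} (via $\mathsf{Z}(\upalpha)\cong\mathsf{A}^*$), so invoking them here risks circularity. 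To make your route self-contained you would essentially have to reprove the invertibility of the distinguished object of \cite{MR2097289}; citing it, as the paper does, is the intended shortcut.
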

\begin{proof}
  Let $D \in \mathcal{C}$ be the {\em distinguished invertible object} of $\mathcal{C}$ introduced by Etingof, Nikshych and Ostrik in \cite[Definition 3.1]{MR2097289}. Then the functor
  \begin{equation*}
    \mathsf{L}: \mathcal{C} \to \mathcal{Z}(\mathcal{C}),
    \quad X \mapsto \mathsf{R}(D^* \otimes X)
    \quad (X \in \mathcal{C})
  \end{equation*}
  is {\em left} adjoint to $U$ \cite[Lemma 4.7]{2014arXiv1402.3482S}. We recall also from \cite{MR2097289} that, as its name suggests, the object $D$ is an invertible object. Thus we have isomorphisms
  \begin{align*}
      \Hom_{\mathcal{Z}(\mathcal{C})}(\mathsf{R}(X), \unitobj)
      & \cong \Hom_{\mathcal{Z}(\mathcal{C})}(\mathsf{R}(D^* \otimes D \otimes X), \unitobj) \\
      & \cong \Hom_{\mathcal{Z}(\mathcal{C})}(\mathsf{L}(D \otimes X), \unitobj) \\
      & \cong \Hom_{\mathcal{Z}(\mathcal{C})}(D \otimes X, U(\unitobj)) \\
      & \cong \Hom_{\mathcal{Z}(\mathcal{C})}(X, D^*)
  \end{align*}
  natural in the variable $X \in \mathcal{C}$. Now suppose that $X \in \mathcal{C}$ is a simple object. Then, by the above computation and Schur's lemma, we have
  \begin{equation*}
    \Hom_{\mathcal{Z}(\mathcal{C})}(\mathsf{R}(X), \unitobj) \ne 0
    \iff X \cong D^*.
  \end{equation*}
  Thus $\upalpha := D^*$ is a unique (up to isomorphism) simple object satisfying~\eqref{eq:alpha-def}. The invertibility of $\upalpha$ follows from the invertibility of $D$. The isomorphism~\eqref{eq:alpha-and-R} has been given in this proof.
\end{proof}

\begin{definition}
  \label{def:ftc-mod-ft}
  We call the object $\upalpha$ of Lemma~\ref{lem:alpha-def} the {\em modular object} of $\mathcal{C}$. We say that $\mathcal{C}$ is {\em unimodular} if $\upalpha$ is isomorphic to the unit object.
\end{definition}

As we have seen in the proof of Lemma~\ref{lem:alpha-def}, the object $\upalpha$ is in fact the dual of the distinguished invertible object introduced in \cite{MR2097289}. Thus our definition of the unimodularity agrees with that given in \cite{MR2097289}. One can find many other properties of $\upalpha$ in, for example, \cite{MR2097289} and \cite{2014arXiv1402.3482S}.

\subsection{Definition of integrals and cointegrals}
\label{subsec:integ-FTCs-def}

We now introduce the notions of integrals and cointegrals of the finite tensor category $\mathcal{C}$.

\begin{definition}
  \label{def:integ-FTCs}
  Let $\upalpha$ be the modular object of $\mathcal{C}$. A {\em categorical cointegral} of $\mathcal{C}$ is a morphism $\lambda: \mathsf{Z}(\upalpha) \to \unitobj$ in $\mathcal{C}$ satisfying
  \begin{equation}
    \label{eq:cointegral-def}
    \mathsf{Z}(\lambda) \circ \updelta_{\upalpha} = \mathsf{u} \circ \lambda.
  \end{equation}
  A {\em categorical left integral} of $\mathcal{C}$ is a morphism $\Lambda: \unitobj \to \mathsf{Z}(\upalpha)$ in $\mathcal{C}$ satisfying
  \begin{equation}
    \label{eq:integral-def-1}
    \mathsf{m}_{\unitobj,\upalpha} \circ (\id_{\mathsf{Z}(\unitobj)} \otimes \Lambda) = \upepsilon_{\unitobj} \otimes \Lambda.
  \end{equation}
  A {\em categorical right integral} of $\mathcal{C}$ is a morphism $\Lambda: \unitobj \to \mathsf{Z}(\upalpha)$ in $\mathcal{C}$ satisfying
  \begin{equation}
    \label{eq:integral-def-2}
    \mathsf{m}_{\upalpha, \unitobj} \circ (\Lambda \otimes \id_{\mathsf{Z}(\unitobj)}) = \Lambda \otimes \upepsilon_{\unitobj}.
  \end{equation}
\end{definition}

The adjective `categorical' is omitted when no confusion arises. A cointegral of a finite tensor category $\mathcal{C}$ may be referred to as a categorical `left' cointegral of $\mathcal{C}$ in view of Example~\ref{ex:Hopf-integ} below. One may define a `right' cointegral, which is not needed in this paper, as a left cointegral of $\mathcal{C}^{\rev}$.

If $\mathcal{C}$ is unimodular, {\it i.e.}, $\upalpha \cong \unitobj$, then our Definition~\ref{def:integ-FTCs} agrees with the definitions given in \cite{2015arXiv150401178S}. As we will see in Theorem~\ref{thm:integ-exist} and its corollary, left integrals and right integrals are the same notion in the unimodular case. 

\begin{example}
  \label{ex:Hopf-integ}
  We justify the above terminologies by considering the case where $\mathcal{C}$ is the representation category of a Hopf algebra. Let $H$ be a finite-dimensional Hopf algebra over $k$. We recall that a {\em left integral} of $H$ is an element $\Lambda \in H$ such that
  \begin{equation}
    \label{eq:Hopf-integ-def-1}
    \text{$h \Lambda = \upepsilon(h) \Lambda$ for all $h \in H$}.  
  \end{equation}
  A {\em left cointegral} of $H$ is a left integral of the dual Hopf algebra of $H$. Namely, a left cointegral of $H$ is a linear map $\lambda: H \to k$ such that
  \begin{equation}
    \label{eq:Hopf-integ-def-2}
    \text{$a_{(1)} \lambda(a_{(2)}) = \lambda(a) 1_H$ for all $a \in H$}.  
  \end{equation}
  It is known that a left integral of $H$ always exists and is unique up to scalar multiple. The {\em modular function} (also called the distinguished grouplike element of $H^*$ in \cite{MR1265853}) is defined to be the unique linear map $\upalpha: H \to k$ such that
  \begin{equation}
    \label{eq:Hopf-alpha-def}
    \text{$\Lambda h = \upalpha(h) \Lambda$ for all $h \in H$},
  \end{equation}
  where $\Lambda \in H$ is an arbitrary non-zero left integral ({\it cf}. the definition of the modular function on a locally compact group). The modular function $\upalpha: H \to k$ is an algebra map. By abuse of notation, we denote by the same symbol $\upalpha$ the left $H$-module corresponding to the algebra map $\upalpha: H \to k$. The object $\upalpha \in \lmod{H}$ is in fact the modular object of $\lmod{H}$, as equation \eqref{eq:cat-integ-Hopf-2} below shows.

  As in Example~\ref{ex:adj-Hopf-alg}, we identify $\mathcal{Z}(\lmod{H})$ with ${}^H_H \mathcal{YD}_{f}$. We examine categorical cointegrals of $\lmod{H}$ by using the description of $\mathsf{R}$ given in Example~\ref{ex:adj-Hopf-alg}. By~\eqref{eq:R-ind-mult}, a linear map $\lambda: \mathsf{Z}(\upalpha) \to k$ is $H$-linear if and only if
  \begin{equation}
    \label{eq:cat-integ-Hopf-1}
    \text{$\lambda(h_{(1)} a S(h_{(3)})) \upalpha(h_{(2)}) = \varepsilon(a) \lambda(a) 1_H$ for all $a, h \in H$}.
  \end{equation}
  By \eqref{eq:R-ind-comult}, the linear map $\lambda$ satisfies~\eqref{eq:cointegral-def} if and only if \eqref{eq:Hopf-integ-def-2} holds, that is, $\lambda$ is a left cointegral of $H$. By the results of Radford \cite{MR1265853}, equation~\eqref{eq:cat-integ-Hopf-1} is automatically satisfied if $\lambda$ is a left cointegral of $H$. In summary, we have verified that a linear map $\lambda: \mathsf{Z}(\upalpha) \to k$ is a categorical cointegral of $\lmod{H}$ if and only if it is a left cointegral of $H$. We note that~\eqref{eq:cointegral-def} is also equivalent to that the map $\lambda: \mathsf{R}(\upalpha) \to k$ is $H$-colinear. Thus, we also have
  \begin{equation}
    \label{eq:cat-integ-Hopf-2}
    \Hom_{\mathcal{Z}(\lmod{H})}(\mathsf{R}(\upalpha), k)
    = \{ \lambda \in H^* \mid \text{$\lambda$ is a left cointegral of $H$} \} \ne 0,
  \end{equation}
  which means that $\upalpha$ is indeed the modular object of $\lmod{H}$.

  We now consider categorical left integrals of $\lmod{H}$. In what follows, we identify the vector space $\mathsf{Z}(\upalpha) = H \otimes_k \upalpha$ with $H$. Let $t: k \to \mathsf{Z}(\upalpha)$ be a linear map, and set $\Lambda = t(1) \in H$. Then, by~\eqref{eq:R-ind-action}, the $H$-linearity of $t$ is equivalent to
  \begin{equation}
    \label{eq:cat-integ-Hopf-3}
    \text{$h_{(1)} \Lambda S(h_{(3)}) \upalpha(h_{(2)}) = \varepsilon(h) \Lambda$ for all $h \in H$}.
  \end{equation}
  By~\eqref{eq:R-ind-mult}, the linear map $t: k \to \mathsf{Z}(\upalpha)$ satisfies \eqref{eq:integral-def-1} if and only if \eqref{eq:Hopf-integ-def-1} holds, that is, the element $\Lambda \in H$ is a left integral. By \eqref{eq:Hopf-alpha-def}, equation \eqref{eq:cat-integ-Hopf-3} is automatically satisfied when $\Lambda$ is a left integral of $H$. In summary, the map
  \begin{gather*}
    \{ t \in \Hom_H(k, \mathsf{Z}(\upalpha)) \mid \text{$t$ is a categorical left integral of $\lmod{H}$} \} \\
    \longrightarrow \{ \Lambda \in H \mid \text{$\Lambda$ is a left integral of $H$} \},
    \qquad t \mapsto t(1)
  \end{gather*}
  is a bijection. Similarly, a categorical right integral of $\lmod{H}$ can be identified with a right integral of $H$ in a natural way.
\end{example}

\subsection{Frobenius type property of the adjoint algebra}

It is a fundamental result that a non-zero left integral of a finite-dimensional Hopf algebra always exists and unique up to scalar multiples. Our first aim is to establish a categorical version of this fact. For this purpose, we introduce some morphisms that may be considered as a `Frobenius structure' of the adjoint algebra of $\mathcal{C}$. More precisely, we consider the algebra $\mathsf{R}(\unitobj)$ in $\mathcal{Z}(\mathcal{C})$. By abuse of notation, we write $\mathsf{A} = \mathsf{R}(\unitobj)$.

The algebra $\mathsf{A} \in \mathcal{Z}(\mathcal{C})$ acts on every object of the form $\mathsf{R}(V)$, $V \in \mathcal{C}$, by the monoidal structure of $\mathsf{R}$. To clarify, we denote $\mathsf{R}(V)$ by $\mathsf{R}(V)_{\mathsf{A}}$ when it is viewed as a right $\mathsf{A}$-module in this way. There is a $k$-linear functor
\begin{equation}
  \label{eq:FTHM-ZC-1}
  \mathcal{C} \to \mathcal{Z}(\mathcal{C})_{\mathsf{A}},
  \quad V \mapsto \mathsf{R}(V)_{\mathsf{A}}.
\end{equation}
One can prove that this functor is an equivalence of categories by considering the internal Hom functor for the $\mathcal{Z}(\mathcal{C})$-module category $\mathcal{C}$ \cite[Theorem 5.6]{2014arXiv1402.3482S}. Another proof is given by applying the fundamental theorem for Hopf modules to the Hopf comonad $\mathsf{Z}$ \cite{2016arXiv160805905S}.

Similarly, we deote by ${}_{\mathsf{A}}\mathsf{R}(V)$ the object $\mathsf{R}(V) \in \mathcal{Z}(\mathcal{C})$ regarded as a left $\mathsf{A}$-module by the monoidal structure of $\mathsf{R}$. There is an equivalence
\begin{equation}
  \label{eq:FTHM-ZC-2}
  \mathcal{C} \to \mathcal{Z}(\mathcal{C})_{\mathsf{A}},
  \quad V \mapsto {}_{\mathsf{A}} \mathsf{R}(V)
\end{equation}
of $k$-linear categories. Now we use these equivalences to prove:

\begin{lemma}
  $\mathsf{A}^* \cong \mathsf{R}(\upalpha)_{\mathsf{A}}$ as right $\mathsf{A}$-modules in $\mathcal{Z}(\mathcal{C})$.
\end{lemma}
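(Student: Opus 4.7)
The plan is to use the Yoneda lemma in conjunction with the equivalence~\eqref{eq:FTHM-ZC-1}. Since $V \mapsto \mathsf{R}(V)_{\mathsf{A}}$ is an equivalence $\mathcal{C} \xrightarrow{\approx} \mathcal{Z}(\mathcal{C})_{\mathsf{A}}$, every object of $\mathcal{Z}(\mathcal{C})_{\mathsf{A}}$ is isomorphic to one of the form $\mathsf{R}(V)_{\mathsf{A}}$, so it suffices to produce an isomorphism
\begin{equation*}
  \Hom_{\mathcal{Z}(\mathcal{C})_{\mathsf{A}}}(\mathsf{R}(V)_{\mathsf{A}}, \mathsf{A}^*) \cong \Hom_{\mathcal{Z}(\mathcal{C})_{\mathsf{A}}}(\mathsf{R}(V)_{\mathsf{A}}, \mathsf{R}(\upalpha)_{\mathsf{A}})
\end{equation*}
that is natural in $V \in \mathcal{C}$.

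First I would compute the left-hand side. Since $\mathcal{Z}(\mathcal{C})$ is a finite tensor category (hence rigid), and $\mathsf{A}$ is an algebra in $\mathcal{Z}(\mathcal{C})$, Lemma~\ref{lem:module-adj} applied to $\mathcal{Z}(\mathcal{C})$ regarded as a right module category over itself tells us that the functor $(-) \otimes \mathsf{A}^*: \mathcal{Z}(\mathcal{C}) \to \mathcal{Z}(\mathcal{C})_{\mathsf{A}}$ is right adjoint to the forgetful functor. Applied to $\unitobj \otimes \mathsf{A}^* = \mathsf{A}^*$ this yields
\begin{equation*}
  \Hom_{\mathcal{Z}(\mathcal{C})_{\mathsf{A}}}(\mathsf{R}(V)_{\mathsf{A}}, \mathsf{A}^*) \cong \Hom_{\mathcal{Z}(\mathcal{C})}(\mathsf{R}(V), \unitobj),
\end{equation*}
naturally in $V$. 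Lemma~\ref{lem:alpha-def} then identifies the right-hand side with $\Hom_{\mathcal{C}}(V, \upalpha)$.

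Second, by the fact that \eqref{eq:FTHM-ZC-1} is an equivalence, we have
\begin{equation*}
  \Hom_{\mathcal{Z}(\mathcal{C})_{\mathsf{A}}}(\mathsf{R}(V)_{\mathsf{A}}, \mathsf{R}(\upalpha)_{\mathsf{A}}) \cong \Hom_{\mathcal{C}}(V, \upalpha),
\end{equation*}
again naturally in $V$. Concatenating the two chains gives the desired natural isomorphism, and the Yoneda lemma (transported across the equivalence~\eqref{eq:FTHM-ZC-1}) produces an isomorphism $\mathsf{A}^* \cong \mathsf{R}(\upalpha)_{\mathsf{A}}$ in $\mathcal{Z}(\mathcal{C})_{\mathsf{A}}$.

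No serious obstacle is anticipated; the only thing to be careful about is that all three isomorphisms in the chain are natural in $V$, which is explicit in the statements of Lemma~\ref{lem:module-adj} and Lemma~\ref{lem:alpha-def}, together with the functoriality of~\eqref{eq:FTHM-ZC-1}.
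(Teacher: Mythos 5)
Your proof is correct, but it reaches the conclusion by a different route than the paper. Both arguments feed on the same three ingredients: the adjunction of Lemma~\ref{lem:module-adj} applied to $\unitobj\otimes\mathsf{A}^*=\mathsf{A}^*$, the natural isomorphism \eqref{eq:alpha-and-R} of Lemma~\ref{lem:alpha-def}, and the equivalence \eqref{eq:FTHM-ZC-1}. Where you differ is in the final step. The paper first argues that $\mathsf{A}^*$ is a \emph{simple} object of $\mathcal{Z}(\mathcal{C})_{\mathsf{A}}$ (because $\mathsf{A}$ is simple as a left module over itself and the duality functor $(-)^*\colon {}_{\mathsf{A}}\mathcal{Z}(\mathcal{C})\to\mathcal{Z}(\mathcal{C})_{\mathsf{A}}$ is an anti-equivalence), deduces from \eqref{eq:FTHM-ZC-1} that $\mathsf{A}^*\cong\mathsf{R}(V)_{\mathsf{A}}$ for some simple $V\in\mathcal{C}$, and then identifies $V\cong\upalpha$ by computing $\Hom_{\mathcal{C}}(V,\upalpha)\ne 0$ and invoking Schur's lemma. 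You instead show that $\mathsf{A}^*$ and $\mathsf{R}(\upalpha)_{\mathsf{A}}$ corepresent naturally isomorphic functors of $V$ and conclude by Yoneda, transported across the equivalence \eqref{eq:FTHM-ZC-1}. Your version dispenses with the simplicity of $\mathsf{A}$ in ${}_{\mathsf{A}}\mathcal{Z}(\mathcal{C})$ (an unproved assertion in the paper, which itself rests on the left-module analogue \eqref{eq:FTHM-ZC-2}), with the duality anti-equivalence, and with Schur's lemma, at the cost of having to track naturality in $V$ through all three isomorphisms --- which, as you note, is available from the cited lemmas. The trade is fair; your argument is marginally more self-contained, while the paper's makes the identification of the simple object more explicit.
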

\begin{proof}
  The proof of this lemma is given in \cite{2014arXiv1402.3482S}. We give a self-contained proof for the completeness: The left $\mathsf{A}$-module $\mathsf{A}$ is a simple object of ${}_{\mathsf{A}}\mathcal{Z}(\mathcal{C})$. Since the duality functor $(-)^*: {}_{\mathsf{A}}\mathcal{Z}(\mathcal{C}) \to \mathcal{Z}(\mathcal{C})_{\mathsf{A}}$ is an anti-equivalence, $\mathsf{A}^*$ is a simple object of $\mathcal{Z}(\mathcal{C})_{\mathsf{A}}$. Thus, in view of the equivalence~\eqref{eq:FTHM-ZC-1}, there is a simple object $V \in \mathcal{C}$ such that $\mathsf{A}^* \cong \mathsf{R}(V)$ as right $\mathsf{A}$-modules. By Lemmas~\ref{lem:module-adj} and \ref{lem:alpha-def},
  \begin{gather*}
    \Hom_{\mathcal{C}}(V, \upalpha)
    \cong \Hom_{\mathcal{Z}(\mathcal{C})}(\mathsf{R}(V), \unitobj)
    \cong \Hom_{-|\mathsf{A}}(\mathsf{R}(V), \mathsf{A}^*) \ne 0.
  \end{gather*}
  By Schur's lemma, $V \cong \upalpha$. Thus $\mathsf{A}^* \cong \mathsf{R}(V)$ as right $\mathsf{A}$-modules in $\mathcal{Z}(\mathcal{C})$.
\end{proof}

We fix an isomorphism $\phi: \mathsf{R}(\upalpha) \to \mathsf{A}^*$ of right $\mathsf{A}$-modules in $\mathcal{Z}(\mathcal{C})$ and set
\begin{equation}
  \label{eq:adj-alg-Fb-1}
  e_{\phi} = \eval_{\mathsf{A}} \circ (\phi \otimes \id_{\mathsf{A}})
  \quad \text{and} \quad
  d_{\phi} = (\id_{\mathsf{A}} \otimes \phi^{-1}) \circ \coev_{\mathsf{A}}.
\end{equation}
Then the triple $(\mathsf{R}(\upalpha), e_{\phi}, d_{\phi})$ is a left dual object of $\mathsf{A}$. Since $\phi$ is $\mathsf{A}$-linear, the evaluation morphism $e_{\phi}$ is a `Frobenius form' in the sense that the equation
\begin{equation}
  \label{eq:adj-alg-Fb-2}
  e_{\phi} \circ (\mathsf{m}_{\unitobj, \upalpha} \otimes \id_{\mathsf{A}}) = e_{\phi} \circ (\id_{\mathsf{R}(\upalpha)} \otimes \mathsf{m}_{\unitobj,\unitobj})
\end{equation}
holds. Similarly, the coevaluation morphism $d_{\phi}$ satisfies
\begin{equation}
  \label{eq:adj-alg-Fb-3}
  (\id_{\mathsf{A}} \otimes \mathsf{m}_{\unitobj, \upalpha}) \circ (d_{\phi} \otimes \id_{\mathsf{A}})
  = (\mathsf{m}_{\unitobj,\unitobj} \otimes \id_{\mathsf{R}(\upalpha)}) \circ (\id_{\mathsf{A}} \otimes d_{\phi}).
\end{equation}
We define the `Frobenius trace' of $\mathsf{A}$ associated to $\phi$ by
\begin{equation}
  \label{eq:adj-alg-Fb-4}
  t_{\phi} = \mathsf{u}^* \circ \phi.
\end{equation}
Then, as in the ordinary theory of Frobenius algebras, we have
\begin{equation}
  \label{eq:adj-alg-Fb-5}
  e_{\phi} = t_{\phi} \circ \mathsf{m}_{\upalpha, \unitobj}.
\end{equation}
Indeed, by~\eqref{eq:adj-alg-Fb-1}--\eqref{eq:adj-alg-Fb-4} and the definition of the dual morphism,
\begin{align*}
  e_{\phi}
  & = e_{\phi} \circ (\id_{\mathsf{R}(\upalpha)} \otimes \mathsf{m}_{\unitobj,\unitobj}) \circ (\id_{\mathsf{R}(\upalpha)} \otimes \id_{\mathsf{A}} \otimes \mathsf{u}) \\
  & = e_{\phi} \circ (\mathsf{m}_{\upalpha,\unitobj} \otimes \id_{\mathsf{A}}) \circ (\id_{\mathsf{R}(\upalpha)} \otimes \id_{\mathsf{A}} \otimes \mathsf{u}) \\
  & = \eval_{\mathsf{A}} \circ (\phi \otimes \id_{\mathsf{A}}) \circ (\mathsf{m}_{\upalpha, \unitobj} \otimes \mathsf{u}) \\
  & = \eval_{\unitobj} \circ (\mathsf{u}^* \otimes \id_{\unitobj}) \circ (\phi \otimes \id_{\unitobj}) \circ (\mathsf{m}_{\upalpha, \unitobj} \otimes \id_{\unitobj}) \\
  & = \mathsf{u}^* \circ \phi \circ \mathsf{m}_{\upalpha, \unitobj}
    = t_{\phi} \circ \mathsf{m}_{\upalpha, \unitobj}.
\end{align*}

\begin{lemma}
  \label{lem:coint-and-trace}
  With the notation as above,
  \begin{equation}
    \label{eq:coint-and-trace}
    \Hom_{\mathcal{Z}(\mathcal{C})}(\mathsf{R}(\upalpha), \unitobj) = \mathrm{span}_k \{ t_{\phi} \} \ne 0.
  \end{equation}
\end{lemma}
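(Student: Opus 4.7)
The plan is to split the statement into two parts: first show that $\Hom_{\mathcal{Z}(\mathcal{C})}(\mathsf{R}(\upalpha), \unitobj)$ is at most one-dimensional, then verify that $t_\phi$ is a non-zero element of it. Both parts follow almost immediately from material already at our disposal; there is no substantial obstacle, the lemma essentially identifying a canonical generator of an a priori one-dimensional space.

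For the dimension count, I apply Lemma~\ref{lem:alpha-def} with $X = \upalpha$ to obtain a natural isomorphism $\Hom_{\mathcal{Z}(\mathcal{C})}(\mathsf{R}(\upalpha), \unitobj) \cong \Hom_{\mathcal{C}}(\upalpha, \upalpha)$. Since $\upalpha$ is a simple object of $\mathcal{C}$ and the ground field $k$ is algebraically closed, Schur's lemma yields $\Hom_{\mathcal{C}}(\upalpha, \upalpha) \cong k$. Hence the Hom space on the left is one-dimensional, and in particular non-zero; this already disposes of the inequality $\ne 0$ in the statement, and reduces the equality to showing that $t_\phi$ does not vanish.

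For the non-vanishing of $t_\phi = \mathsf{u}^* \circ \phi$, since $\phi$ is an isomorphism it is enough to show that $\mathsf{u}^*: \mathsf{A}^* \to \unitobj$ is non-zero; by the faithfulness of the left-duality functor this in turn reduces to $\mathsf{u} \ne 0$. The latter is immediate from the counit axiom for the monoidal comonad $\mathsf{Z}$: since $\upepsilon$ is a monoidal natural transformation one has $\upepsilon_{\unitobj} \circ \mathsf{u} = \id_{\unitobj}$, so $\mathsf{u}$ is a split monomorphism and in particular non-zero. (Equivalently, one may use the defining relation $\uppi_{\unitobj; X} \circ \mathsf{u} = \coev_X$ specialized to $X = \unitobj$.) Finally, membership of $t_\phi$ in $\Hom_{\mathcal{Z}(\mathcal{C})}(\mathsf{R}(\upalpha), \unitobj)$ is automatic, because $\phi$ is a morphism in $\mathcal{Z}(\mathcal{C})_{\mathsf{A}}$ and hence in $\mathcal{Z}(\mathcal{C})$, while $\mathsf{u}: \unitobj \to \mathsf{A}$ belongs to $\mathcal{Z}(\mathcal{C})$ by the very construction of the monoidal structure of $\mathsf{R}$. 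Combining these observations proves the lemma.
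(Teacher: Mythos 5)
Your proposal is correct, and its overall structure (membership of $t_\phi$, non-vanishing, and the one-dimensionality via Lemma~\ref{lem:alpha-def} and Schur's lemma) matches the paper's proof. The one place you diverge is the non-vanishing step: the paper argues by contradiction, using the relation $e_{\phi} = t_{\phi} \circ \mathsf{m}_{\upalpha,\unitobj}$ from \eqref{eq:adj-alg-Fb-5} together with the zigzag identity $\id_{\mathsf{A}} = (\id_{\mathsf{A}} \otimes e_{\phi}) \circ (d_{\phi} \otimes \id_{\mathsf{A}})$ to deduce $\id_{\mathsf{A}} = 0$ if $t_\phi$ vanished, whereas you read off $t_\phi \ne 0$ directly from the definition $t_\phi = \mathsf{u}^* \circ \phi$, using that $\phi$ is an isomorphism, that the duality functor is faithful, and that $\mathsf{u}$ is split monic since $\upepsilon_{\unitobj} \circ \mathsf{u} = \id_{\unitobj}$. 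Your variant is slightly more elementary in that it bypasses the Frobenius-form identity \eqref{eq:adj-alg-Fb-5} entirely; the paper's version has the mild advantage of exercising the dual-pair structure $(\mathsf{R}(\upalpha), e_\phi, d_\phi)$ that is reused immediately afterwards in the proof of Theorem~\ref{thm:integ-exist}. Both arguments are complete and correct.
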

\begin{proof}
  Let $\mathbf{I} := \Hom_{\mathcal{Z}(\mathcal{C})}(\mathsf{R}(\upalpha), \unitobj)$ be the left-hand side of \eqref{eq:coint-and-trace}. Then $t_{\phi} \in \mathbf{I}$ since it is defined as the composition of morphisms in $\mathcal{Z}(\mathcal{C})$ as in \eqref{eq:adj-alg-Fb-4}. If $t_{\phi} = 0$, then $e_{\phi} = 0$ by~\eqref{eq:adj-alg-Fb-5}, and hence we have $\id_{\mathsf{A}} = (\id_{\mathsf{A}} \otimes e_{\phi}) \circ (d_{\phi} \otimes \id_{\mathsf{A}}) = 0$, a contradiction. Thus $t_{\phi} \ne 0$. By Lemma~\ref{lem:alpha-def} and Schur's lemma, we have
  \begin{equation*}
    \mathbf{I}
    \cong \Hom_{\mathcal{C}}(\upalpha, \upalpha) \cong k.
  \end{equation*}
  Therefore the vector space $\mathbf{I}$ is spanned by $t_{\phi}$ over $k$.
\end{proof}

\begin{remark}
  \label{rem:right-dual-A}
  In the same way as above, one can prove that there is also an isomorphism ${}^* \mathsf{A} \cong \mathsf{R}(\upalpha)$ of left $\mathsf{A}$-modules. We fix an isomorphism $\psi: \mathsf{R}(\upalpha) \to {}^* \mathsf{A}$ of left $\mathsf{A}$-modules in $\mathcal{Z}(\mathcal{C})$ and set
  \begin{equation*}
    e'_{\psi} = \eval'_{\mathsf{A}} \circ (\id_{\mathsf{A}} \otimes \psi)
    \quad \text{and} \quad
    d'_{\psi} = (\psi^{-1} \otimes \id_{\mathsf{A}}) \circ \coev'_{\mathsf{A}}.
  \end{equation*}
  The $\mathsf{A}$-linearity of $\psi$ implies the following equations:
  \begin{gather*}
    e'_{\psi} \circ (\id_\mathsf{A} \otimes \mathsf{m}_{\unitobj, \upalpha})
    = e'_{\psi} \circ (\mathsf{m}_{\unitobj, \unitobj} \otimes \id_{\mathsf{R}(\upalpha)}). \\
    (\mathsf{m}_{\unitobj, \upalpha} \otimes \id_{\mathsf{A}}) \circ (\id_{\mathsf{A}} \otimes d'_{\psi})
    = (\id_{\mathsf{R}(\upalpha)} \otimes \mathsf{m}_{\unitobj, \unitobj}) \circ (d'_{\psi} \otimes \id_{\mathsf{A}}).
  \end{gather*}
\end{remark}

\subsection{Existence and uniqueness}

The adjoint algebra $\mathsf{A} = \mathsf{U} \mathsf{R}(\unitobj)$ acts on every object of the form $\mathsf{Z}(V)$ by the monoidal structure of $\mathsf{Z}$. By the discussion of the previous subsection, we obtain an isomorphism $\mathsf{Z}(\upalpha) \cong \mathsf{A}^*$ of right $\mathsf{A}$-modules in $\mathcal{C}$ and an isomorphism ${}^*\mathsf{A} \cong \mathsf{Z}(\upalpha)$ of left $\mathsf{A}$-modules in $\mathcal{C}$. Now we are ready to prove:

\begin{theorem}
  \label{thm:integ-exist}
  Let $\mathbf{I}^{c}$, $\mathbf{I}^{\ell}$ and $\mathbf{I}^{r}$ be the spaces of cointegrals, left integrals and right integrals of the finite tensor category $\mathcal{C}$, respectively. Then we have
  \begin{equation}
    \label{eq:integ-exist-1}
    \dim_k \mathbf{I}^{c} = \dim_k \mathbf{I}^{\ell} = \dim_k \mathbf{I}^{r} = 1.
  \end{equation}
  For every non-zero cointegral $\lambda \in \mathbf{I}^c$, there exist a unique left integral $\Lambda^{\ell} \in \mathbf{I}^{\ell}$ and a unique right integral $\Lambda^r \in \mathbf{I}^{r}$ such that
  \begin{equation}
    \label{eq:integ-exist-2}
    \lambda \circ \Lambda^{\ell} = \id_{\unitobj} = \lambda \circ \Lambda^r.
  \end{equation}
\end{theorem}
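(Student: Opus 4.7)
The plan is to split the argument into two stages: first computing the dimensions of the three spaces, then exhibiting a normalized triple and using one-dimensionality to obtain the general statement.

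For the dimensions, observe that the cointegral condition~\eqref{eq:cointegral-def} asserts precisely that $\lambda: \mathsf{Z}(\upalpha) \to \unitobj$ is a morphism of $\mathsf{Z}$-comodules with $\unitobj$ carrying the structure $\mathsf{u}$. Under~\eqref{eq:Z-comod-ZC} this becomes $\mathbf{I}^c = \Hom_{\mathcal{Z}(\mathcal{C})}(\mathsf{R}(\upalpha), \unitobj)$, which is one-dimensional by Lemma~\ref{lem:coint-and-trace}. Condition~\eqref{eq:integral-def-1} for left integrals is exactly the statement that $\Lambda: \unitobj_\upepsilon \to \mathsf{R}(\upalpha)$ is a morphism of left $\mathsf{A}$-modules in $\mathcal{C}$, where $\unitobj_\upepsilon$ denotes $\unitobj$ with the action $\upepsilon_\unitobj: \mathsf{A} \to \unitobj$ and $\mathsf{R}(\upalpha)$ carries the action $\mathsf{m}_{\unitobj, \upalpha}$. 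Using the left-$\mathsf{A}$-module isomorphism $\psi: \mathsf{R}(\upalpha) \to {}^*\mathsf{A}$ from Remark~\ref{rem:right-dual-A}, the anti-equivalence $(-)^*: {}_\mathsf{A}\mathcal{C} \to \mathcal{C}_\mathsf{A}$, and the free-forgetful adjunction $\Hom_{\mathcal{C}_\mathsf{A}}(\mathsf{A}, -) \cong \Hom_\mathcal{C}(\unitobj, -)$, I chain
\[
  \mathbf{I}^\ell \cong \Hom_{{}_\mathsf{A}\mathcal{C}}(\unitobj_\upepsilon, {}^*\mathsf{A}) \cong \Hom_{\mathcal{C}_\mathsf{A}}(\mathsf{A}, \unitobj_\upepsilon) \cong \Hom_\mathcal{C}(\unitobj, \unitobj) = k,
\]
noting that the right action on $\unitobj^* = \unitobj$ dual to $\upepsilon_\unitobj$ is again $\upepsilon_\unitobj$. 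The case of $\mathbf{I}^r$ is entirely parallel, using $\phi$.

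For the normalization, since the spaces are one-dimensional, uniqueness is automatic once one normalized pair is produced. I take $\lambda_0 := t_\phi = \mathsf{u}^* \circ \phi$ and set
\[
  \Lambda_0^r := (\upepsilon_\unitobj \otimes \id_{\mathsf{R}(\upalpha)}) \circ d_\phi \colon \unitobj \to \mathsf{R}(\upalpha).
\]
Expanding $d_\phi = (\id_\mathsf{A} \otimes \phi^{-1}) \circ \coev_\mathsf{A}$ and using the general identity $(f \otimes \id_{X^*}) \circ \coev_X = (\id_Y \otimes f^*) \circ \coev_Y$ (for $f: X \to Y$) with $f = \mathsf{u}$, one finds $\phi \circ \Lambda_0^r = \upepsilon_\unitobj^*$, whence
\[
  \lambda_0 \circ \Lambda_0^r = \mathsf{u}^* \circ \upepsilon_\unitobj^* = (\upepsilon_\unitobj \circ \mathsf{u})^* = \id_\unitobj
\]
by the bimonad relation $\upepsilon_\unitobj \circ \mathsf{u} = \id_\unitobj$. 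That $\Lambda_0^r$ satisfies the right-integral equation~\eqref{eq:integral-def-2} is verified by post-composing the Frobenius coevaluation identity~\eqref{eq:adj-alg-Fb-3} with $\upepsilon_\unitobj \otimes \id_{\mathsf{R}(\upalpha)}$ and applying the monoidality $\upepsilon_\unitobj \circ \mathsf{m}_{\unitobj, \unitobj} = \upepsilon_\unitobj \otimes \upepsilon_\unitobj$. The parallel construction $\Lambda_0^\ell := (\id_{\mathsf{R}(\upalpha)} \otimes \upepsilon_\unitobj) \circ d'_\psi$, together with the analogous relations from Remark~\ref{rem:right-dual-A}, produces a left integral; after rescaling $\psi$ so that ${}^*\mathsf{u} \circ \psi = \lambda_0$ (possible since both lie in the one-dimensional space $\mathbf{I}^c$ and the source is nonzero), the same calculation yields $\lambda_0 \circ \Lambda_0^\ell = \id_\unitobj$. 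For an arbitrary nonzero cointegral $\lambda = c \lambda_0$ with $c \in k^\times$, the required normalized integrals are $c^{-1} \Lambda_0^r$ and $c^{-1} \Lambda_0^\ell$.

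The principal obstacle is the direct verification that the explicit morphisms $\Lambda_0^r$ and $\Lambda_0^\ell$ really satisfy the integral conditions~\eqref{eq:integral-def-2} and~\eqref{eq:integral-def-1}. Each reduction to the Frobenius coevaluation identity and the monoidality of $\upepsilon$ is straightforward in spirit, but requires careful tensor-factor bookkeeping, since the left-action equation and the right-action equation live on different sides of the bimodule $\mathsf{R}(\upalpha)$.
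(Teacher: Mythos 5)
Your proposal is correct and follows essentially the same route as the paper's proof: identifying $\mathbf{I}^c$ with $\Hom_{\mathcal{Z}(\mathcal{C})}(\mathsf{R}(\upalpha),\unitobj)$ via Lemma~\ref{lem:coint-and-trace}, computing $\mathbf{I}^{\ell}$ and $\mathbf{I}^r$ through the module isomorphisms $\mathsf{R}(\upalpha)\cong{}^*\mathsf{A}$ and $\mathsf{R}(\upalpha)\cong\mathsf{A}^*$ (your detour through the duality anti-equivalence is just the left-module form of Lemma~\ref{lem:module-adj} seen from the other side), and producing the normalized integrals from the Frobenius coevaluation $d_{\phi}$ exactly as in the paper. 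The only blemish is the typo ``with $f=\mathsf{u}$'' where the coevaluation identity is actually being applied to $f=\upepsilon_{\unitobj}$; the surrounding computation is nevertheless the intended and correct one.
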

\begin{proof}
  Let $\lambda: \mathsf{Z}(\upalpha) \to \unitobj$ be a morphism in $\mathcal{C}$. The condition~\eqref{eq:cointegral-def} is equivalent to that $\lambda$ is a morphism of $\mathsf{Z}$-comodules. Hence,
  \begin{equation*}
    \mathbf{I}^{c} = \Hom_{\lcomod{\mathsf{Z}}}(\mathsf{Z}(\upalpha), \unitobj)
    = \Hom_{\mathcal{Z}(\mathcal{C})}(\mathsf{R}(\upalpha), \unitobj).
  \end{equation*}
  Thus $\dim_k \mathbf{I}^c = 1$ by Lemma~\ref{lem:coint-and-trace}. We consider the space $\mathbf{I}^{r}$ of right integrals. If we view $\unitobj$ as a right $\mathsf{A}$-module by $\upepsilon_{\unitobj}: \mathsf{A} \to \unitobj$, then a right integral of $\mathcal{C}$ is just a morphism $\unitobj \to \mathsf{Z}(\upalpha)$ of right $\mathsf{A}$-modules. Thus, by Lemma~\ref{lem:module-adj},
  \begin{equation*}
    \mathbf{I}^{r} = \Hom_{-|\mathsf{A}}(\unitobj, \mathsf{Z}(\upalpha))
    \cong \Hom_{-|\mathsf{A}}(\unitobj, \mathsf{A}^*)
    \cong \Hom_{\mathcal{C}}(\unitobj, \unitobj) \cong k,
  \end{equation*}
  where $\Hom_{-|\mathsf{A}}$ is the Hom functor of $\mathcal{C}_{\mathsf{A}}$. Similarly, if we view $\unitobj$ as a left $\mathsf{A}$-module by $\upepsilon_{\unitobj}$, then we have
  \begin{equation*}
    \mathbf{I}^{\ell} = \Hom_{\mathsf{A}|-}(\unitobj, \mathsf{Z}(\upalpha))
    \cong \Hom_{\mathsf{A}|-}(\unitobj, {}^* \mathsf{A})
    \cong \Hom_{\mathcal{C}}(\unitobj, \unitobj) \cong k,
  \end{equation*}
  where $\Hom_{\mathsf{A}|-}$ is the Hom functor of ${}_{\mathsf{A}}\mathcal{C}$. Hence \eqref{eq:integ-exist-1} has been verified.

  Now let $\lambda \in \mathbf{I}^c$ be a non-zero cointegral of $\mathcal{C}$. We choose an isomorphism $\phi: \mathsf{A}^* \to \mathrm{R}(\upalpha)$ of right $\mathsf{A}$-modules in $\mathcal{Z}(\mathcal{C})$ and set $d_{\phi}$ and $t_{\phi}$ by \eqref{eq:adj-alg-Fb-1} and~\eqref{eq:adj-alg-Fb-4}, respectively. Lemma~\ref{lem:coint-and-trace} implies that $\lambda = c t_{\phi}$ for some $c \in k^{\times}$. If we set
  \begin{equation*}
    \Lambda_0 := (\upepsilon_{\unitobj} \otimes \id_{\mathsf{Z}(\upalpha)}) \circ d_{\phi},
  \end{equation*}
  then we have
  \begin{align*}
    m_{\upalpha, \unitobj} \circ (\Lambda_0 \otimes \id_a)
    & = (\upepsilon_{\unitobj} \otimes \id_{\mathsf{Z}(\upalpha)}) \circ (\id_{\mathsf{A}} \otimes m_{\upalpha, \unitobj}) \circ (d_{\phi} \otimes \id_\mathsf{A}) \\
    & = (\upepsilon_{\unitobj} \otimes \id_{\mathsf{Z}(\upalpha)}) \circ (m_{\unitobj,\unitobj} \otimes \id_{\mathsf{Z}(\upalpha)}) \circ (\id_{\mathbf{\mathsf{A}}} \otimes d_{\phi}) \\
    & = (\upepsilon_{\unitobj} \otimes \upepsilon_{\unitobj} \otimes \id_{\mathsf{Z}(\upalpha)}) \circ (\id_{\mathbf{\mathsf{A}}} \otimes d_{\phi}) \\
    & = \upepsilon_{\unitobj} \otimes \Lambda_0
  \end{align*}
by~\eqref{eq:adj-alg-Fb-3} and the fact that $\upepsilon_{\unitobj}: \mathsf{A} \to \unitobj$ is a morphism of algebras in $\mathcal{C}$. Thus $\Lambda_0$ is a right integral of $\mathcal{C}$. Moreover, we have
  \begin{align*}
    t_{\phi} \circ \Lambda_0
    & = (\upepsilon_{\unitobj} \otimes t_{\phi}) \circ d_{\phi}
      = (\upepsilon_{\unitobj} \otimes u^*) \circ \coev_{\mathsf{A}}
      = \upepsilon_{\unitobj} \circ u = \id_{\unitobj}.
  \end{align*}
  Thus $\Lambda^r := c^{-1} \Lambda_0$ is a right integral satisfying~\eqref{eq:integ-exist-2}. Since $\dim_k \mathbf{I}^r = 1$, such a right integral is unique. One can prove the existence and the uniqueness of a left integral $\Lambda^{\ell}$ satisfying~\eqref{eq:integ-exist-2} in a similar way (see Remark~\ref{rem:right-dual-A}).
\end{proof}

\subsection{Relation to the original definition of the modular function}

For a finite-dimensional Hopf algebra, the modular function is defined in terms of integrals. On the other hand, we have first defined the modular object of $\mathcal{C}$ and introduced categorical integrals by using the modular object. We recall that a morphism $\upchi_{\beta}: A \to \unitobj$ of algebras in $\mathcal{C}$ is defined by~\eqref{eq:def-chi-alpha} for each invertible object $\beta \in \mathcal{C}$. Motivated by the defining formula \eqref{eq:Hopf-alpha-def} of the modular function, we will prove:

\begin{theorem}
  \label{thm:integ-mod-ft}
  If $\Lambda^{\ell}$ is a left integral of $\mathcal{C}$, then we have
  \begin{equation}
    \label{eq:integral-alpha-1}
    \mathsf{m}_{\upalpha, \unitobj} \circ (\Lambda^{\ell} \otimes \id_A) = \Lambda^{\ell} \otimes \upchi_{\upalpha}.
  \end{equation}
  If $\Lambda^{r}$ is a right integral of $\mathcal{C}$, then we have
  \begin{equation}
    \label{eq:integral-alpha-2}
    \mathsf{m}_{\unitobj, \upalpha} \circ (\id_{A} \otimes \Lambda^r) = \upchi_{\upalpha^*} \otimes \Lambda^r.
  \end{equation}
\end{theorem}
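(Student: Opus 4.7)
The plan is to reformulate each of the two equations as the statement that $\Lambda^{\ell}$ (resp.\ $\Lambda^{r}$) underlies a morphism of $\mathsf{A}$-bimodules, and then to prove existence of such a morphism by a dimension count. Observe that~\eqref{eq:integral-alpha-1} asserts exactly that $\Lambda^{\ell}$ is right $\mathsf{A}$-linear when $\unitobj$ carries the right action $\upchi_{\upalpha}$; combined with the defining left $\mathsf{A}$-linearity~\eqref{eq:integral-def-1}, this is equivalent to $\Lambda^{\ell}$ underlying a morphism of $\mathsf{A}$-bimodules ${}_{\unitobj}\unitobj_{\upalpha}\to\mathsf{Z}(\upalpha)$ in the notation of Lemma~\ref{lem:adj-alg-bimod-2}. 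Analogously,~\eqref{eq:integral-alpha-2} is equivalent to $\Lambda^{r}$ underlying a morphism ${}_{\upalpha^{*}}\unitobj_{\unitobj}\to\mathsf{Z}(\upalpha)$. Because the spaces of left and right integrals are each one-dimensional by Theorem~\ref{thm:integ-exist}, and each bimodule-Hom space embeds into the corresponding integral space by forgetting one of the two actions, it suffices to produce a non-zero element in each of these two bimodule-Hom spaces.

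Using Lemma~\ref{lem:adj-alg-bimod-1} together with the description of ${}_{\alpha}\unitobj_{\beta}$ from Lemma~\ref{lem:adj-alg-bimod-2}, the first space is identified with
\begin{equation*}
 \Hom_{\mathcal{C}\boxtimes\mathcal{C}\boxtimes\mathcal{C}}\Bigl(\unitobj\boxtimes\upalpha\boxtimes\upalpha^{*},\ \int^{X\in\mathcal{C}}X\boxtimes\upalpha\boxtimes X^{*}\Bigr).
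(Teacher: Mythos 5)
Your reformulation is correct: equation~\eqref{eq:integral-alpha-1} together with the defining condition~\eqref{eq:integral-def-1} says precisely that $\Lambda^{\ell}$ underlies a morphism of $\mathsf{A}$-bimodules ${}_{\unitobj}\unitobj_{\upalpha}\to\mathsf{Z}(\upalpha)$, and likewise for $\Lambda^{r}$; and since each bimodule-Hom space injects into the corresponding one-dimensional space of integrals (Theorem~\ref{thm:integ-exist}), it would indeed suffice to show each bimodule-Hom space is non-zero. The problem is that your argument stops exactly where the real work begins, and the missing step is not a routine computation. Pulling the end out of the Hom functor and using the properties of the Deligne tensor product, the space you display is isomorphic to
\begin{equation*}
  \Hom_{\mathcal{C}}(\upalpha,\upalpha)\otimes_k
  \int_{X\in\mathcal{C}}\Hom_{\mathcal{C}}(\unitobj,X)\otimes_k\Hom_{\mathcal{C}}(\upalpha^*,X^*)
  \;\cong\;
  \int_{X\in\mathcal{C}}\Hom_{\mathcal{C}}(\unitobj,X)\otimes_k\Hom_{\mathcal{C}}(X,\upalpha),
\end{equation*}
and you must show this end is non-zero. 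The co-Yoneda lemma does not apply (it computes the coend, not the end), and the non-vanishing here is essentially equivalent to the theorem itself: by associativity of $\mathsf{m}$, the morphism $\mathsf{m}_{\upalpha,\unitobj}\circ(\Lambda^{\ell}\otimes\id_{\mathsf{A}})$ always factors as $\Lambda^{\ell}\otimes\chi$ for a unique character $\chi=\upchi_{\beta}$, and your bimodule-Hom space is non-zero exactly when $\beta\cong\upalpha$ --- which is the identity to be proved. The only evident candidate for a non-zero element of that space is $\Lambda^{\ell}$ itself, so producing one without further input is circular.

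The paper's proof runs the same $\boxtimes$-computation in the opposite logical direction, which is what makes it work. It first constructs an explicit element $t=\mathsf{m}_{\unitobj,\upalpha}\circ(\id_{\mathsf{A}}\otimes\Lambda^{r})$, proves $t\neq 0$ (since $t\circ\mathsf{u}=\Lambda^{r}$), and uses the terminal-object property of $(\unitobj,\Lambda^{r})$ in the category of generalized right integrals (Lemma~\ref{lem:gen-integ-terminal}) together with Lemma~\ref{lem:mul-char} to write $t=\upchi_{\beta}\otimes\Lambda^{r}$ for some \emph{a priori unknown} invertible $\beta$. The non-vanishing of $\Hom_{\mathsf{A}|\mathsf{A}}({}_{\beta}\unitobj_{\upepsilon},\mathsf{Z}(\upalpha))$ is then known for free because it contains $t$, and the $\boxtimes$-computation is only needed for the easy \emph{vanishing} statement $\Hom_{\mathcal{C}}(\beta^*,\upalpha)=0$ for $\beta\not\cong\upalpha^*$ (Schur's lemma), which pins down $\beta\cong\upalpha^*$. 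To salvage your approach you would need to first manufacture such an element and an unknown character, which brings you back to the paper's argument.
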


The above theorem yields the following consequence:

\begin{corollary}
  \label{cor:unimo-integ}
  Let $\mathbf{I}^{\ell}$ and $\mathbf{I}^r$ be the spaces of left and right integrals of $\mathcal{C}$, respectively. Then $\mathbf{I}^{\ell} = \mathbf{I}^r$ if and only if $\mathcal{C}$ is unimodular.
\end{corollary}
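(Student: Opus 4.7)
The plan is to leverage Theorem~\ref{thm:integ-exist} (which gives $\dim_k \mathbf{I}^{\ell} = \dim_k \mathbf{I}^r = 1$ with both spaces sitting inside $\Hom_{\mathcal{C}}(\unitobj, \mathsf{Z}(\upalpha))$) and Theorem~\ref{thm:integ-mod-ft}. Because two one-dimensional subspaces of a common vector space coincide if and only if they share a non-zero element, the corollary reduces to showing that a single non-zero morphism $\Lambda: \unitobj \to \mathsf{Z}(\upalpha)$ is simultaneously a left and a right integral if and only if $\upalpha \cong \unitobj$.

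For the ``if'' direction, I would first observe that in the strict setting $\coev_{\unitobj} = \id_{\unitobj}$, so the defining formula~\eqref{eq:def-chi-alpha} gives $\upchi_{\unitobj} = \uppi_{\unitobj;\unitobj} = \upepsilon_{\unitobj}$. Assuming $\mathcal{C}$ unimodular, this yields $\upchi_{\upalpha} = \upepsilon_{\unitobj}$, and hence Theorem~\ref{thm:integ-mod-ft} turns the left-integral identity~\eqref{eq:integral-def-1}, when rewritten via~\eqref{eq:integral-alpha-1}, into the right-integral identity~\eqref{eq:integral-def-2}. So a left integral is automatically a right integral, forcing $\mathbf{I}^{\ell} = \mathbf{I}^r$.

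For the ``only if'' direction, suppose a non-zero $\Lambda \in \mathbf{I}^{\ell} = \mathbf{I}^r$ exists. Viewing $\Lambda$ as a left integral, equation~\eqref{eq:integral-alpha-1} gives $\mathsf{m}_{\upalpha,\unitobj} \circ (\Lambda \otimes \id_{\mathsf{A}}) = \Lambda \otimes \upchi_{\upalpha}$; viewing $\Lambda$ as a right integral, equation~\eqref{eq:integral-def-2} gives the same expression equal to $\Lambda \otimes \upepsilon_{\unitobj}$. The key step is then to cancel $\Lambda$: by Theorem~\ref{thm:integ-exist} choose a cointegral $\lambda: \mathsf{Z}(\upalpha) \to \unitobj$ with $\lambda \circ \Lambda = \id_{\unitobj}$, and post-compose both equalities with $\lambda$ to obtain $\upchi_{\upalpha} = \upepsilon_{\unitobj}$ as morphisms $\mathsf{A} \to \unitobj$. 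Since $\upepsilon_{\unitobj} = \upchi_{\unitobj}$, injectivity of the bijection $\upchi$ of Lemma~\ref{lem:mul-char} yields $\upalpha \cong \unitobj$.

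The potential obstacle is the cancellation of $\Lambda$ in the reverse direction, since $\Lambda$ is not a split monomorphism in $\mathcal{C}$ a priori; however, Theorem~\ref{thm:integ-exist} provides a retraction $\lambda$ tailor-made for this purpose, which makes the step routine. Everything else is bookkeeping on the identities supplied by Theorem~\ref{thm:integ-mod-ft} and the classification of algebra morphisms $\mathsf{A} \to \unitobj$ in Lemma~\ref{lem:mul-char}.
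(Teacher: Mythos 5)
Your proof is correct and follows essentially the same route as the paper: both directions rest on Theorem~\ref{thm:integ-mod-ft}, the identity $\upchi_{\unitobj} = \upepsilon_{\unitobj}$, and the injectivity of $\upchi$ from Lemma~\ref{lem:mul-char}. The only (immaterial) difference is in cancelling $\Lambda$ from $\Lambda \otimes \upchi_{\upalpha} = \Lambda \otimes \upepsilon_{\unitobj}$: you post-compose with the cointegral $\lambda$ satisfying $\lambda \circ \Lambda = \id_{\unitobj}$, whereas the paper simply notes that $\Lambda$, being a non-zero morphism out of the simple object $\unitobj$, is a monomorphism.
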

\begin{proof}
  The `if' part is obvious from $\upepsilon_{\unitobj} = \upchi_{\unitobj}$ and the above theorem. To prove the converse, we assume $\mathbf{I}^{\ell} = \mathbf{I}^{r}$. Let $\Lambda$ be a non-zero left integral. Then it is also a right integral by the assumption. By the above theorem, we have
  \begin{equation*}
    \Lambda \circ \upchi_{\upalpha}
    = \Lambda \otimes \upchi_{\upalpha}
    = \mathsf{m}_{\upalpha, \unitobj} \circ (\Lambda \otimes \id_A)
    = \varepsilon_{\unitobj} \otimes \Lambda
    = \Lambda \circ \varepsilon_{\unitobj}.
  \end{equation*}
  The morphism $\Lambda$ is a monomorphism as a non-zero morphism from a simple object. Thus, by Lemma~\ref{lem:mul-char}, we have $\upalpha \cong \unitobj$, {\it i.e.}, $\mathcal{C}$ is unimodular.
\end{proof}

To prove Theorem~\ref{thm:integ-mod-ft}, we introduce the following notion: A {\em generalized right integral} is a pair $(V, t)$ consisting of an object $V \in \mathcal{C}$ and a morphism $t: V \to \mathsf{Z}(\upalpha)$ in $\mathcal{C}$ satisfying the following equation:
\begin{equation}
  \label{eq:def-gen-integ}
  \mathsf{m}_{\upalpha, \unitobj} \circ (t \otimes \id_A) = t \otimes \upepsilon_{\unitobj}.
\end{equation}
We define the category $\mathcal{I}^{r}$ of generalized right integrals as the full subcategory of the category of objects over $\mathsf{Z}(\upalpha) \in \mathcal{C}$. A categorical right integral has the following universal property:

\begin{lemma}
  \label{lem:gen-integ-terminal}
  If $\Lambda^{r}$ is a non-zero categorical right integral, then the pair $(\unitobj, \Lambda^{r})$ is a terminal object of the category $\mathcal{I}^{r}$.
\end{lemma}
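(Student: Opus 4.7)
The plan is to reformulate \eqref{eq:def-gen-integ} as a right $\mathsf{A}$-linearity condition and then apply the duality $\mathsf{Z}(\upalpha)\cong \mathsf{A}^{*}$ together with Lemma~\ref{lem:module-adj}. If we equip any $V\in\mathcal{C}$ with the right $\mathsf{A}$-action $\id_V\otimes\upepsilon_{\unitobj}:V\otimes\mathsf{A}\to V$ (which I will denote $V_{\upepsilon}$), and equip $\mathsf{Z}(\upalpha)$ with the right $\mathsf{A}$-action $\mathsf{m}_{\upalpha,\unitobj}$ arising from the monoidal structure of $\mathsf{Z}$, then equation \eqref{eq:def-gen-integ} says precisely that $t:V_{\upepsilon}\to \mathsf{Z}(\upalpha)$ is a morphism in $\mathcal{C}_{\mathsf{A}}$. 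The terminality claim therefore reduces to showing that, for every $V\in\mathcal{C}$, the $k$-linear map
\[
  \Phi_V:\Hom_{\mathcal{C}}(V,\unitobj)\longrightarrow \Hom_{\mathcal{C}_{\mathsf{A}}}(V_{\upepsilon},\mathsf{Z}(\upalpha)),\qquad f\mapsto \Lambda^{r}\circ f,
\]
is a bijection.

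Well-definedness of $\Phi_V$ follows by composing \eqref{eq:integral-def-2} on the right with $f\otimes\id_{\mathsf{A}}$, which yields the analogue of \eqref{eq:def-gen-integ} for $\Lambda^{r}\circ f$. Injectivity is equally easy: since $\unitobj$ is simple in $\mathcal{C}$ and $\Lambda^{r}\ne 0$, the morphism $\Lambda^{r}$ is a monomorphism in $\mathcal{C}$, so $\Lambda^{r}\circ f=\Lambda^{r}\circ f'$ forces $f=f'$. The nontrivial point is surjectivity (equivalently, existence of $f$ for a given $t$), which I plan to handle by a dimension count.

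For this, I would combine the isomorphism $\mathsf{Z}(\upalpha)\cong \mathsf{A}^{*}$ of right $\mathsf{A}$-modules established earlier in this section with Lemma~\ref{lem:module-adj} applied to $\mathcal{M}=\mathcal{C}$: the latter supplies a natural isomorphism
\[
  \Hom_{\mathcal{C}_{\mathsf{A}}}(V_{\upepsilon},\mathsf{Z}(\upalpha))\;\cong\;\Hom_{\mathcal{C}_{\mathsf{A}}}(V_{\upepsilon},\unitobj\otimes\mathsf{A}^{*})\;\cong\;\Hom_{\mathcal{C}}(V,\unitobj).
\]
Since both sides are finite-dimensional of equal dimension, the injective map $\Phi_V$ is automatically surjective, and the unique $f\in\Hom_{\mathcal{C}}(V,\unitobj)$ produced by $\Phi_V^{-1}(t)$ is the unique morphism $(V,t)\to(\unitobj,\Lambda^{r})$ in $\mathcal{I}^{r}$. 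The only subtle point I anticipate is keeping track of the two different right $\mathsf{A}$-module structures in play---the trivial one on $V$ coming from $\upepsilon_{\unitobj}$ and the canonical one on $\mathsf{Z}(\upalpha)$ coming from $\mathsf{m}_{\upalpha,\unitobj}$---but once this bookkeeping is in place, the argument is essentially formal.
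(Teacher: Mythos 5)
Your proof is correct and follows essentially the same route as the paper: both reformulate \eqref{eq:def-gen-integ} as right $\mathsf{A}$-linearity of $t: V_{\upepsilon}\to\mathsf{Z}(\upalpha)$ and reduce to the isomorphism $\Hom_{-|\mathsf{A}}(V_{\upepsilon},\mathsf{Z}(\upalpha))\cong\Hom_{\mathcal{C}}(V,\unitobj)$ obtained from $\mathsf{Z}(\upalpha)\cong\mathsf{A}^*$ and Lemma~\ref{lem:module-adj}. The only (harmless) difference is that you conclude surjectivity of ``compose with $\Lambda^r$'' by an injectivity-plus-dimension count, whereas the paper normalizes $\Lambda^r$ and identifies the adjunction isomorphism with that composition via a naturality diagram chase.
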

\begin{proof}
  Given an object $V \in \mathcal{C}$, we denote by $V_{\upepsilon} \in \mathcal{C}_\mathsf{A}$ the object $V$ regarded as a right $\mathsf{A}$-module by the action $\id_V \otimes \upepsilon_{\unitobj}$. We fix an isomorphism $\phi: \mathsf{Z}(\upalpha) \to \mathsf{A}^*$ of right $\mathsf{A}$-modules in $\mathcal{C}$. Then we have natural isomorphisms
  \begin{equation}
    \label{eq:gen-integ-1}
    \Hom_{-|\mathsf{A}}(V_{\upepsilon}, \mathsf{Z}(\upalpha))
    \cong \Hom_{-|\mathsf{A}}(V_{\upepsilon}, \mathsf{A}^*)
    \cong \Hom_{\mathcal{C}}(V, \unitobj)
    \quad (V \in \mathcal{C}).
  \end{equation}
  Now let $(V, t)$ be a generalized left integral of $\mathcal{C}$. Equation~\eqref{eq:def-gen-integ} means that $t: V_{\upepsilon} \to \mathsf{Z}(\upalpha)$ is a morphism of right $\mathsf{A}$-modules. Let $t_0: V \to \unitobj$ be the morphism corresponding to $t$ via \eqref{eq:gen-integ-1}. We consider the commutative diagram
  \begin{equation*}
    \xymatrix{
      \Hom_{-|\mathsf{A}}(\unitobj_{\upepsilon}, \mathsf{Z}(\upalpha))
      \ar[d]_{\Hom_{-|\mathsf{A}}(t_0, \mathsf{Z}(\upalpha))}
      \ar[rrrr]^{\text{\eqref{eq:gen-integ-1} with $V = \unitobj$}}_{\cong}
      & & & & \Hom_{\mathcal{C}}(\unitobj, \unitobj)
      \ar[d]^{\Hom_{\mathcal{C}}(t_0, \unitobj)} \\
      \Hom_{-|\mathsf{A}}(V_{\upepsilon}, \mathsf{Z}(\upalpha))
      \ar[rrrr]^{\text{\eqref{eq:gen-integ-1}}}_{\cong}
      & & & & \Hom_{\mathcal{C}}(V, \unitobj).
    }
  \end{equation*}
  We may assume that the right integral $\Lambda^r \in \Hom_{-|A}(\unitobj_{\upepsilon}, \mathsf{Z}(\upalpha))$ corresponds to $\id_{\unitobj}$ via \eqref{eq:gen-integ-1} since $(\unitobj, \Lambda^r) \cong (\unitobj, c \Lambda^r)$ in $\mathcal{I}^r$ for all non-zero $c \in k$. By chasing $\Lambda^r$ in the above diagram, we have $\Lambda^r \circ t_0 = t$. Namely,
  \begin{equation*}
    t_0: (V, t) \to (\unitobj, \Lambda^r)
  \end{equation*}
  is a morphism in $\mathcal{I}^r$. If $t_0': (V, t) \to (\unitobj, \Lambda^r)$ is a morphism in $\mathcal{I}^r$, then
  \begin{equation*}
    \Lambda^r \circ t_0' = t = \Lambda^r \circ t_0.
  \end{equation*}
  Since $\Lambda^r$ is monic, we have $t_0' = t_0$. Thus $(\unitobj, \Lambda^r)$ is a terminal object of $\mathcal{I}^r$.
\end{proof}

The category $\mathcal{I}^{\ell}$ of generalized left integrals of $\mathcal{C}$ is also defined. Let $\Lambda^{\ell}$ be a non-zero left integral of $\mathcal{C}$. In a similar way as above, one can show that $(\unitobj, \Lambda^{\ell})$ is a terminal object of $\mathcal{I}^{\ell}$.

\begin{proof}[Proof of Theorem~\ref{thm:integ-mod-ft}]
  We only prove \eqref{eq:integral-alpha-2}, since the other one, \eqref{eq:integral-alpha-1}, can be proved in a similar way. Since  \eqref{eq:integral-alpha-2} is obviously satisfied if $\Lambda^r = 0$, we may assume that $\Lambda^r$ is a non-zero right integral of $\mathcal{C}$. We then set
  \begin{equation}
    \label{eq:integ-mod-ft-proof-1}
    t = \mathsf{m}_{\unitobj, \upalpha} \circ (\id_A \otimes \Lambda^r).
  \end{equation}
  The morphism $t$ is non-zero, since $\Lambda^r \ne 0$ and
  \begin{equation*}
    t \circ \mathsf{u} = \mathsf{m}_{\unitobj, \upalpha} \circ (u \otimes \id_{\mathsf{Z}(\upalpha)}) \circ \Lambda^r = \id_{\mathsf{Z}(\upalpha)} \circ \Lambda^r = \Lambda^r.
  \end{equation*}
  By the associativity of $\mathsf{m}$, the pair $(\mathsf{A}, t)$ is a generalized right integral. Thus, by Lemma~\ref{lem:gen-integ-terminal}, there exists a unique morphism $\chi: \mathsf{A} \to \unitobj$ in $\mathcal{C}$ such that
  \begin{equation}
    \label{eq:integ-mod-ft-proof-2}
    t = \Lambda_r \circ \chi = \chi \otimes \Lambda^r.
  \end{equation}
  Again by the associativity of $\mathsf{m}$, we have
  \begin{equation*}
    \Lambda^r \circ \chi \circ \mathsf{m}_{\unitobj, \unitobj}
    = t \circ \mathsf{m}_{\unitobj, \unitobj}
    = \Lambda^r \circ (\chi \otimes \chi).
  \end{equation*}
  Since $\Lambda^r$ is monic, we have $\chi \circ \mathsf{m}_{\unitobj, \unitobj} = \chi \otimes \chi$. We also have $\chi \circ \mathsf{u} = \id_{\unitobj}$. Hence the morphism $\chi: A \to \unitobj$ is in fact a morphism of algebras in $\mathcal{C}$.

  By Lemma~\ref{lem:mul-char}, there is an invertible object $\beta \in \mathcal{C}$ such that $\chi = \upchi_{\beta}$. We denote by ${}_{\beta}\unitobj_{\upepsilon}$ the unit object regarded as an $A$-bimodule by the left action given by $\chi = \upchi_{\beta}$ and the right action given by $\upepsilon_{\unitobj}$. Then we have
  \begin{equation}
    \label{eq:integ-mod-ft-proof-3}
    \mathbf{I}' := \Hom_{\mathsf{A}|\mathsf{A}}({}_{\beta}\unitobj_{\upepsilon}, \mathsf{Z}(\upalpha)) \ne 0,
  \end{equation}
  since this space contains $t \ne 0$ (here $\Hom_{\mathsf{A}|\mathsf{A}}$ means the Hom functor of ${}_{\mathsf{A}}\mathcal{C}_{\mathsf{A}}$). Now let $\Phi: \mathcal{C} \boxtimes \mathcal{C} \boxtimes \mathcal{C} \to {}_{\mathsf{A}}\mathcal{C}_{\mathsf{A}}$ be the equivalence given in Lemma~\ref{lem:adj-alg-bimod}. Then
  \begin{equation*}
    \Phi \left(\int_{X \in \mathcal{C}} X \boxtimes \upalpha \boxtimes X^*\right)
    \cong \mathsf{Z}(\upalpha)
    \quad \text{and} \quad
    \Phi(\beta \boxtimes \beta^* \boxtimes \unitobj) \cong {}_{\beta} \unitobj_{\upepsilon}
  \end{equation*}
  as $\mathsf{A}$-bimodules in $\mathcal{C}$; see Lemmas~\ref{lem:adj-alg-bimod-1} and~\ref{lem:adj-alg-bimod-2}. Thus,
  \begin{align*}
    \mathbf{I}'
    & \cong \Hom_{\mathcal{C} \boxtimes \mathcal{C} \boxtimes \mathcal{C}}
      (\beta \boxtimes \beta^* \boxtimes \unitobj,
      \textstyle \int_{X \in \mathcal{C}} X \boxtimes \upalpha \boxtimes X^*) \\
    & \cong \textstyle \int_{X \in \mathcal{C}} 
      \Hom_{\mathcal{C} \boxtimes \mathcal{C} \boxtimes \mathcal{C}}
      (\beta \boxtimes \beta^* \boxtimes \unitobj,
      X \boxtimes \upalpha \boxtimes X^*) \\
    & \cong \textstyle \int_{X \in \mathcal{C}} 
      \Hom_{\mathcal{C}}(\beta, X)
      \otimes_k 
      \Hom_{\mathcal{C}}(\beta^*, \upalpha)
      \otimes_k
      \Hom_{\mathcal{C}}(\unitobj, X^*) \\
    & \cong \Hom_{\mathcal{C}}(\beta^*, \upalpha)
      \otimes_k \left( \textstyle \int_{X \in \mathcal{C}} 
      \Hom_{\mathcal{C}}(\beta, X)
      \otimes_k 
      \Hom_{\mathcal{C}}(\unitobj, X^*) \right).
  \end{align*}
  If $\beta$ is not isomorphic to $\upalpha^*$, then we have $\mathbf{I}' = 0$, which contradicts to~\eqref{eq:integ-mod-ft-proof-3}. Hence $\beta \cong \upalpha^*$. Now equation~\eqref{eq:integral-alpha-2} follows from \eqref{eq:integ-mod-ft-proof-1} and \eqref{eq:integ-mod-ft-proof-2}.
\end{proof}

\subsection{Maschke theorem}

Let $H$ be a finite-dimensional Hopf algebra, and let $\Lambda \in H$ be a non-zero left or right integral of $H$. The Maschke theorem states that $H$ is semisimple (as an algebra over $k$) if and only if $\varepsilon(\Lambda) \ne 0$. This result has been generalized to unimodular finite tensor categories in \cite{2014arXiv1402.3482S}. Using our notion of categorical integrals, we can extend the Maschke theorem to general finite tensor categories:

\begin{theorem}
  \label{thm::Maschke}
  Let $\Lambda$ be a non-zero left or right integral of $\mathcal{C}$. Then
  \begin{equation}
    \label{eq:Maschke}
    \upepsilon_{\upalpha} \circ \Lambda \ne 0
    \iff \text{$\mathcal{C}$ is semisimple}.
  \end{equation}
\end{theorem}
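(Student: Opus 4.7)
The plan is to handle the theorem by a case split on whether $\mathcal{C}$ is unimodular, reducing the unimodular case to the Maschke theorem for unimodular finite tensor categories proved in \cite{2014arXiv1402.3482S}.

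First I would observe that the composition $\upepsilon_{\upalpha} \circ \Lambda$ is a morphism from $\unitobj$ to $\upalpha$. Since $\unitobj$ is simple by definition of a finite tensor category and $\upalpha$ is simple by Lemma~\ref{lem:alpha-def}, Schur's lemma yields
\begin{equation*}
  \Hom_{\mathcal{C}}(\unitobj, \upalpha) \ne 0
  \Longleftrightarrow \upalpha \cong \unitobj
  \Longleftrightarrow \mathcal{C} \text{ is unimodular}.
\end{equation*}
In particular, $\upepsilon_{\upalpha} \circ \Lambda \ne 0$ already forces $\mathcal{C}$ to be unimodular. On the other hand, a semisimple finite tensor category is a fusion category, and every fusion category is unimodular by \cite{MR2097289} (in the semisimple setting the left and right categorical dimensions agree, which trivializes the Radford formula and thereby the distinguished invertible object, forcing $\upalpha \cong \unitobj$). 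Hence semisimplicity of $\mathcal{C}$ also forces unimodularity.

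Having observed that both sides of \eqref{eq:Maschke} imply that $\mathcal{C}$ is unimodular, I would dispose of the non-unimodular case simply by noting that both sides of the biconditional are then false. The remaining unimodular case reduces to a known result: fixing an identification $\upalpha \cong \unitobj$, Definition~\ref{def:integ-FTCs} recovers the notion of an integral of a unimodular finite tensor category in the sense of \cite{2014arXiv1402.3482S} (as stated in the remarks following that definition), and by Corollary~\ref{cor:unimo-integ} the distinction between left and right integrals disappears, so it makes no difference which variant of $\Lambda$ is chosen. The counit $\upepsilon_{\upalpha}$ becomes the counit $\upepsilon_{\unitobj}\colon \mathsf{A} \to \unitobj$ of the adjoint algebra, and the Maschke theorem for unimodular finite tensor categories of \cite{2014arXiv1402.3482S} supplies the equivalence.

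The only nontrivial external ingredient is the implication \emph{semisimple $\Rightarrow$ unimodular}; beyond that, the argument is a short formal reduction, so I do not anticipate any genuine technical obstacle once the case split is organized.
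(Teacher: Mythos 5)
Your proposal is correct and follows essentially the same route as the paper: the paper's proof likewise reduces the unimodular case to the known Maschke theorem for unimodular finite tensor categories and disposes of the non-unimodular case by observing that $\upepsilon_{\upalpha}\circ\Lambda\in\Hom_{\mathcal{C}}(\unitobj,\upalpha)$ vanishes by Schur's lemma while semisimplicity is excluded because a fusion category is unimodular. The extra remarks you add (invoking Corollary~\ref{cor:unimo-integ} and the compatibility of Definition~\ref{def:integ-FTCs} with the unimodular definition) are consistent with the paper's surrounding discussion and do not change the argument.
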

\begin{proof}
  As we have recalled, this theorem has been proved under the assumption that $\mathcal{C}$ is unimodular \cite[Proposition 5.6]{2015arXiv150401178S}. Thus it suffices to consider the case where $\upalpha$ is not isomorphic to $\unitobj$. If this is the case, $\upepsilon_{\upalpha} \circ \Lambda \in \Hom_{\mathcal{C}}(\unitobj, \upalpha)$ must be zero by Schur's lemma. Since a semisimple finite tensor category ($=$ a fusion category) is unimodular \cite{MR2097289}, the finite tensor category $\mathcal{C}$ cannot be semisimple. Thus we have verified~\eqref{eq:Maschke} in the case where $\mathcal{C}$ is not unimodular.
\end{proof}

\subsection{Cosemisimple-unimodularity}

If $H$ is a Hopf algebra, then one can discuss whether $H$ is cosemisimple, {\it i.e.}, semisimple as a coalgebra. It is not clear what a `cosemisimple finite tensor category' means. Nevertheless, using the notion of cointegrals of a finite tensor category, we can define the notion of `cosemisimple unimodular' finite tensor categories as follows:

\begin{definition}
  \label{def:FTC-CU}
  Let $\lambda$ be a non-zero cointegral of $\mathcal{C}$. We say that $\mathcal{C}$ {\em has the property CU} if $\mathcal{C}$ is unimodular ({\it i.e.}, $\upalpha \cong \unitobj$) and the composition
  \begin{equation}
    \label{eq:css-unimo-def}
    \unitobj \xrightarrow{\quad u \quad} \mathsf{Z}(\unitobj)
    \xrightarrow{\quad \cong \quad} \mathsf{Z}(\upalpha)
    \xrightarrow{\quad \lambda \quad} \unitobj
  \end{equation}
  is a non-zero morphism. Here the second arrow in~\eqref{eq:css-unimo-def} is the morphism induced by an isomorphism $\upalpha \cong \unitobj$.
\end{definition}

`CU' is an abbreviation for `cosemisimple unimodular.' The above definition is justified by the following theorem:

\begin{theorem}
  \label{thm:FTC-CU}
  Let $H$ be a finite-dimensional Hopf algebra. Then $H$ is cosemisimple unimodular if and only if $\lmod{H}$ has the property CU.
\end{theorem}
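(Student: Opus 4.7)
The plan is to translate the property CU of $\lmod{H}$ into Hopf-algebraic conditions via the identifications in Examples~\ref{ex:adj-Hopf-alg} and~\ref{ex:Hopf-integ}, and then invoke the classical cointegral criterion for cosemisimplicity of a Hopf algebra.

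First, as observed in Example~\ref{ex:Hopf-integ}, the modular object of $\lmod{H}$ is the one-dimensional left $H$-module associated to the modular function $\upalpha \colon H \to k$ of $H$. Consequently, $\lmod{H}$ is unimodular in the sense of Definition~\ref{def:ftc-mod-ft} if and only if $\upalpha = \varepsilon$, that is, if and only if $H$ is unimodular as a Hopf algebra. This handles one of the two conditions in Definition~\ref{def:FTC-CU}.

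Next, assume the unimodularity condition. Identifying $\upalpha$ with $\unitobj = k$, the object $\mathsf{Z}(\upalpha) = H \otimes_k k$ is just $H$ as a vector space, and by Example~\ref{ex:Hopf-integ} a non-zero categorical cointegral of $\lmod{H}$ corresponds to a non-zero left cointegral $\lambda \colon H \to k$ in the Hopf-algebra sense. By the formula~\eqref{eq:R-ind-mult}, the unit $\mathsf{u} \colon \unitobj \to \mathsf{Z}(\unitobj)$ is the map $c \mapsto c \cdot 1_H$, and the isomorphism $\mathsf{Z}(\unitobj) \cong \mathsf{Z}(\upalpha)$ coming from $\upalpha \cong \unitobj$ is simply a non-zero scalar multiple of the identity on $H$. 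Therefore the composition~\eqref{eq:css-unimo-def} is (up to a non-zero scalar) the map $c \mapsto c\,\lambda(1_H)$, which is non-zero if and only if $\lambda(1_H) \neq 0$. Since the space of cointegrals is one-dimensional by Theorem~\ref{thm:integ-exist}, this condition is independent of the choice of non-zero $\lambda$.

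Finally, one invokes the classical fact that $H$ is cosemisimple (equivalently, $H^*$ is semisimple) if and only if $\lambda(1_H) \neq 0$ for some (equivalently, any) non-zero left cointegral $\lambda$ of $H$; this is the Maschke theorem applied to $H^*$, since $\lambda$ is precisely a left integral of $H^*$ and $\lambda(1_H) = \varepsilon_{H^*}(\lambda)$. Combining the three steps gives the stated equivalence. The only point requiring care is the bookkeeping of the identifications (modular object $\leftrightarrow$ modular function, categorical cointegral $\leftrightarrow$ left cointegral of $H$, and $\mathsf{u}$ $\leftrightarrow$ unit of $H$); no genuinely new argument beyond this dictionary and the classical Maschke-type result is required.
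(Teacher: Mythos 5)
Your proposal is correct and follows essentially the same route as the paper: reduce to the unimodular case via the identification of the modular object with the modular function, compute that the composition \eqref{eq:css-unimo-def} is scalar multiplication by $\lambda(1_H)$, and conclude by the dual Maschke theorem. The only difference is that you spell out the dictionary of identifications more explicitly than the paper does, which is harmless.
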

\begin{proof}
  It is enough to consider the case where $H$ is unimodular. Let $\lambda: H \to k$ be a non-zero left cointegral. By the Maschke theorem, $H$ is cosemisimple if and only if $\lambda(1) \ne 0$. By Example~\ref{ex:adj-Hopf-alg} and the discussion in Subsection~\ref{subsec:integ-FTCs-def}, we see that the composition \eqref{eq:css-unimo-def} is the scalar multiplication by $\lambda(1)$. Thus \eqref{eq:css-unimo-def} is a non-zero morphism if and only if $H$ is cosemisimple.
\end{proof}

We say that two finite-dimensional Hopf algebras $H$ and $H'$ are {\em gauge equivalent} if $\lmod{H}$ and $\lmod{H'}$ are equivalent as tensor categories. A quantity or a property of a finite-dimensional Hopf algebra is said to be a {\em gauge invariant} if it is invariant under the gauge equivalence. The above theorem implies:

\begin{corollary}[Aljadeff-Etingof-Gelaki-Nikshych {\cite[Corollary 3.6.]{MR1939116}}]
  The cosemisimple unimodularity is a gauge invariant.
\end{corollary}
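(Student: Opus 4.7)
The plan is to deduce the corollary from Theorem~\ref{thm:FTC-CU} together with the observation that the property CU is manifestly a tensor-categorical invariant. Suppose $H$ and $H'$ are gauge equivalent finite-dimensional Hopf algebras, i.e., there is a $k$-linear monoidal equivalence $F: \lmod{H} \to \lmod{H'}$. By Theorem~\ref{thm:FTC-CU}, it suffices to show that $\lmod{H}$ has the property CU if and only if $\lmod{H'}$ does.

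First I would verify that all data entering Definition~\ref{def:FTC-CU} are preserved by tensor equivalences. The Drinfeld center $\mathcal{Z}(-)$, the forgetful functor $\mathsf{U}$, its right adjoint $\mathsf{R}$, and hence the central Hopf comonad $\mathsf{Z}$ and its structure morphisms $\mathsf{u}$, $\upepsilon$, $\updelta$, $\mathsf{m}$ are all defined purely in terms of the tensor category structure. In particular, $F$ induces an equivalence $\widetilde{F}: \mathcal{Z}(\lmod{H}) \to \mathcal{Z}(\lmod{H'})$ of braided tensor categories commuting with the forgetful functors, so it intertwines $\mathsf{R}$ with $\mathsf{R}'$ up to a canonical natural isomorphism. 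Consequently, $F$ maps the modular object $\upalpha$ of $\lmod{H}$ to the modular object $\upalpha'$ of $\lmod{H'}$ (since $\upalpha$ is characterized by the non-vanishing of $\Hom_{\mathcal{Z}(\mathcal{C})}(\mathsf{R}(-), \unitobj)$ on simple objects, as in Lemma~\ref{lem:alpha-def}), and it induces an isomorphism on the one-dimensional space of cointegrals characterized by~\eqref{eq:cointegral-def}.

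Next, since $F$ sends the unit object to the unit object and preserves the relevant structure morphisms up to canonical isomorphism, the composition displayed in~\eqref{eq:css-unimo-def} for $\lmod{H}$ is carried by $F$ to the analogous composition for $\lmod{H'}$. Hence unimodularity ($\upalpha \cong \unitobj$) transports across the equivalence, and the non-vanishing of~\eqref{eq:css-unimo-def} does as well. Therefore $\lmod{H}$ has the property CU iff $\lmod{H'}$ does, and by Theorem~\ref{thm:FTC-CU} this is equivalent to $H$ (resp.\ $H'$) being cosemisimple unimodular.

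The only real content lies in unwinding that every ingredient of Definition~\ref{def:FTC-CU} is intrinsically categorical; no calculations at the level of $H$ or $H'$ are required. The mildly non-trivial step, which I would carry out carefully, is to confirm that $F$ intertwines the right adjoints $\mathsf{R}$ and $\mathsf{R}'$ compatibly with their monoidal structures, so that the morphism~\eqref{eq:css-unimo-def} really does transport along $F$; this follows formally from the uniqueness (up to unique isomorphism) of adjoints and of monoidal structures on adjoints of strong monoidal equivalences.
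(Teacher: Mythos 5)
Your proposal is correct and follows the paper's route exactly: the paper derives the corollary immediately from Theorem~\ref{thm:FTC-CU}, leaving implicit the observation that the property CU is defined purely in terms of the tensor category structure and is therefore preserved under tensor equivalence. You simply spell out that implicit step (transport of $\mathsf{R}$, $\upalpha$, cointegrals, and the composition~\eqref{eq:css-unimo-def} along the equivalence), which is a reasonable elaboration rather than a different argument.
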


\section{Integration functor}
\label{sec:integ-func}

\subsection{Integrals and projective modules}

In this section, we study relations of categorical integrals and morphisms factoring through projective objects. We first recall some related results due to Lorenz \cite{MR1435369,MR1961567}. Let $H$ be a finite-dimensional Hopf algebra over $k$, and let $\Lambda \in H$ be a non-zero left integral. Given a left $H$-module $X$, we define the submodule $I(X)$ of $X$ by
\begin{equation}
  \label{eq:Hopf-integ-functor}
  I(X) = \{ \Lambda x \mid x \in X \}.
\end{equation}
This submodule has interesting properties. For example, there is an isomorphism
\begin{equation*}
  I(W \otimes V^*) \cong \{ f \in \Hom_H(V, W) \mid \text{$f$ factors through a projective $H$-module} \}
\end{equation*}
of vector spaces \cite[Lemma 3.1]{MR1961567}. The dimension of $I(\mathsf{A})$, where $\mathsf{A} \in \lmod{H}$ is the adjoint algebra, is equal to the rank of the Cartan matrix of $H$ reduced modulo the characteristic of $k$ \cite[Theorem 3.4]{MR1435369}.

Our aim is to extend these results to finite tensor categories. In the following, we introduce an analogue of $I(X)$ in terms of categorical left integrals. The submodule $I(X)$ is, by definition, the image of the linear map $X \to X$ given by $x \mapsto \Lambda x$, which is {\em not} $H$-linear in general. From a categorical point of view, this map should be thought of as an $H$-linear map
\begin{equation}
  \label{eq:Hopf-integ-nat-tr}
  X \to X \otimes \upalpha, \quad x \mapsto \Lambda x,
\end{equation}
where $\upalpha$ means the modular function $\upalpha: H \to k$ regarded as a left $H$-module in an obvious way.

\subsection{Integration functor}

We first investigate a categorical analogue of the submodule \eqref{eq:Hopf-integ-functor} and the map \eqref{eq:Hopf-integ-nat-tr} mentioned in the previous subsection. Throughout this section, we fix a finite tensor category $\mathcal{C}$ over $k$ and use the set of notations introduced in Subsection~\ref{subsec:notations}. Let $\upalpha$ be the modular object of $\mathcal{C}$. By Lemmas~\ref{lem:Hom-V-ZW} and~\ref{lem:Hom-V-ZW-br}, there is an isomorphism
\begin{equation*}
  \Hom_{\mathcal{C}}(\unitobj, \mathsf{Z}(\upalpha)) \xrightarrow{\quad \cong \quad} \Nat(\id_{\mathcal{C}}, \id_{\mathcal{C}} \otimes \upalpha)
\end{equation*}
of vector spaces. A categorical analogue of \eqref{eq:Hopf-integ-nat-tr} is defined to be the natural transformation corresponding to a non-zero categorical left integral. Defining $I(X)$ for $X \in \mathcal{C}$ is a bit technical:

\begin{definition}
  \label{def:integ-functor}
  We fix a non-zero categorical left integral $\Lambda \in \Hom_{\mathcal{C}}(\unitobj, Z(\upalpha))$ and define $I(X) \in \mathcal{C}$ for $X \in \mathcal{C}$ to be the image of the morphism
  \begin{equation}
    \label{eq:integ-functor-def}
    X \otimes \upalpha^*
    \xrightarrow{\quad \widetilde{\Lambda}_X \otimes \upalpha^* \quad}
    X \otimes \upalpha \otimes \upalpha^*
    \xrightarrow{\quad \id_X \otimes \coev_{\upalpha}^{-1} \quad}
    X,
  \end{equation}
  where $\widetilde{\Lambda}$ is the natural transformation corresponding to $\Lambda$. Since \eqref{eq:integ-functor-def} is natural in $X$, the assignment $X \mapsto I(X)$ extends to a $k$-linear endofunctor on $\mathcal{C}$ in such a way that the inclusion $I(X) \hookrightarrow X$ is a natural transformation. We call $I: \mathcal{C} \to \mathcal{C}$ the {\em integration functor} of $\mathcal{C}$.
\end{definition}

The object $I(X)$ is defined as a subobject of $X$. Since a non-zero left integral of $\mathcal{C}$ is unique up to scalar multiples, the subobject $I(X) \subset X$ and the functor $I$ do not depend on the choice of the non-zero left integral $\Lambda$.

Till the end of this section, we denote by $\Lambda$ and $\widetilde{\Lambda}$ a fixed non-zero left integral of $\mathcal{C}$ and the corresponding natural transformation $\id_{\mathcal{C}} \to \id_{\mathcal{C}} \otimes \upalpha$, respectively. Since the modular object $\upalpha$ is invertible, we have
\begin{equation}
  \label{eq:integ-IX-zero}
  I(X) = 0 \iff \widetilde{\Lambda}_X = 0
\end{equation}
for $X \in \mathcal{C}$. We also have a canonical isomorphism $I(X) \cong \mathrm{Im}(\widetilde{\Lambda}_X) \otimes \upalpha^*$. Thus there is a functorial epi-mono factorization of $\widetilde{\Lambda}_X$ of the form
\begin{equation}
  \label{eq:integ-epi-mono}
  \widetilde{\Lambda}_X = \left( X
    \twoheadrightarrow I(X) \otimes \upalpha
    \hookrightarrow X \otimes \upalpha
  \right).
\end{equation}

We investigate properties of the integration functor. In the Hopf algebra case, the submodule~\eqref{eq:Hopf-integ-functor} is a direct sum of trivial modules. We say that an object of $\mathcal{C}$ is {\em trivial} if it is the direct sum of finitely many (possibly zero) copies of the unit object. Our first result on the integration functor is:

\begin{lemma}
  \label{lem:integ-functor-triv}
  $I(X)$ is a trivial object for all $X \in \mathcal{C}$.
\end{lemma}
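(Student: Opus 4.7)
The plan is to show that the morphism $\widetilde{\Lambda}_X\colon X \to X \otimes \upalpha$ always factors through an object of the form $\upalpha^{\oplus n}$. If this can be done, then the image of $\widetilde{\Lambda}_X$, which by~\eqref{eq:integ-epi-mono} equals $I(X) \otimes \upalpha$, is a quotient of $\upalpha^{\oplus n} = \unitobj^{\oplus n} \otimes \upalpha$. Since $-\otimes\upalpha$ is an auto-equivalence of $\mathcal{C}$ and every quotient of the semisimple object $\unitobj^{\oplus n}$ is itself a direct sum of copies of the simple object $\unitobj$, it will follow that $I(X) \otimes \upalpha \cong \unitobj^{\oplus m} \otimes \upalpha$ for some $m$, and therefore that $I(X) \cong \unitobj^{\oplus m}$ is trivial.

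To produce such a factorization, first set $\ell_X := \uppi_{\upalpha;X} \circ \Lambda \colon \unitobj \to X \otimes \upalpha \otimes X^*$, so that the definitions of $\partial_{\upalpha,X}$ and of $\widetilde{\Lambda}_X$ yield at once
$$\widetilde{\Lambda}_X = (\id_{X \otimes \upalpha} \otimes \eval_X) \circ (\ell_X \otimes \id_X).$$
The pivotal step is then to translate the integral equation~\eqref{eq:integral-def-1} into a module-theoretic statement about $\ell_X$. Precomposing~\eqref{eq:integral-def-1} with $\uppi_{\upalpha;X}$, expanding the resulting left-hand side via~\eqref{eq:Z-def-mult}, and recognising the identity $\uprho^{\ell}_X = (\id_X \otimes \eval_X) \circ (\uppi_{\unitobj;X} \otimes \id_X)$ will produce
$$(\uprho^{\ell}_X \otimes \id_{\upalpha \otimes X^*}) \circ (\id_{\mathsf{A}} \otimes \ell_X) = \upepsilon_{\unitobj} \otimes \ell_X,$$
which says precisely that $\ell_X$ is a morphism of left $\mathsf{A}$-modules from $(\unitobj, \upepsilon_{\unitobj})$ to $(X \otimes \upalpha \otimes X^*, \uprho^{\ell}_X \otimes \id_{\upalpha \otimes X^*})$.

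With that in hand, I invoke Lemma~\ref{lem:adj-alg-mod}: under the equivalence $\mathcal{C} \boxtimes \mathcal{C} \xrightarrow{\approx} {}_{\mathsf{A}}\mathcal{C}$, these two $\mathsf{A}$-modules correspond respectively to $\unitobj \boxtimes \unitobj$ and $X \boxtimes (\upalpha \otimes X^*)$. The defining universal property of the Deligne tensor product then supplies a finite-sum decomposition $\ell_X = \sum_{i=1}^{n} f_i \otimes g_i$ with $f_i\colon \unitobj \to X$ and $g_i\colon \unitobj \to \upalpha \otimes X^*$. Writing $\widetilde{g}_i := (\id_{\upalpha} \otimes \eval_X) \circ (g_i \otimes \id_X)\colon X \to \upalpha$ for the morphism corresponding to $g_i$ under duality and substituting this decomposition into the formula for $\widetilde{\Lambda}_X$ yields
$$\widetilde{\Lambda}_X = \sum_{i=1}^{n} (f_i \otimes \id_{\upalpha}) \circ \widetilde{g}_i,$$
which factors as $X \to \upalpha^{\oplus n} \to X \otimes \upalpha$ via $(\widetilde{g}_i)_i$ followed by $(f_i)_i \otimes \id_{\upalpha}$, as required.

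The main obstacle is the $\mathsf{A}$-linearity identity for $\ell_X$: this is the step that genuinely uses the defining property of the left integral $\Lambda$, and it requires careful bookkeeping with the universal dinatural transformation $\uppi$ and the multiplication of the central Hopf comonad. Everything downstream is a fairly mechanical application of Lemma~\ref{lem:adj-alg-mod}, the universal property of the Deligne tensor product, and the invertibility of $\upalpha$.
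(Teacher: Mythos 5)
Your argument is correct, and its second half genuinely differs from the paper's. The paper proves the identity $(\uprho^{\ell}_X\otimes\id_{\upalpha})\circ(\id_{\mathsf{A}}\otimes\widetilde{\Lambda}_X)=\upepsilon_{\unitobj}\otimes\widetilde{\Lambda}_X$ by a computation with the canonical half-braidings of $\mathsf{Z}(\unitobj)$ and $\mathsf{Z}(\upalpha)$, and then concludes by invoking the external criterion (from the unimodular-case paper) that an object $T$ is trivial if and only if $\uprho^{\ell}_T=\upepsilon_{\unitobj}\otimes\id_T$, transferring the identity to $I(X)$ through the epi-mono factorization. You instead prove the refined identity $(\uprho^{\ell}_X\otimes\id_{\upalpha\otimes X^*})\circ(\id_{\mathsf{A}}\otimes\ell_X)=\upepsilon_{\unitobj}\otimes\ell_X$ for $\ell_X=\uppi_{\upalpha;X}\circ\Lambda$ directly from~\eqref{eq:integral-def-1}, \eqref{eq:Z-def-mult} and the formula $\uprho^{\ell}_X=(\id_X\otimes\eval_X)\circ(\uppi_{\unitobj;X}\otimes\id_X)$ --- this computation is in fact a clean two-line verification, not the delicate bookkeeping you anticipate, and it recovers the paper's identity after tensoring with $\id_X$ and contracting with $\eval_X$. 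You then finish with Lemma~\ref{lem:adj-alg-mod} and the Deligne Hom formula $\Hom_{\mathcal{C}\boxtimes\mathcal{C}}(\unitobj\boxtimes\unitobj, X\boxtimes(\upalpha\otimes X^*))\cong\Hom_{\mathcal{C}}(\unitobj,X)\otimes_k\Hom_{\mathcal{C}}(\unitobj,\upalpha\otimes X^*)$, so that $\widetilde{\Lambda}_X$ factors through $\upalpha^{\oplus n}$. (One pedantic point: the image of a composite through $\upalpha^{\oplus n}$ is a priori a subquotient rather than a quotient of $\upalpha^{\oplus n}$, but since $\upalpha^{\oplus n}$ is $\upalpha$-isotypic this changes nothing.) What each approach buys: the paper's finish is shorter but leans on the cited characterization of trivial objects; yours stays entirely within the machinery of Section~\ref{sec:rem-adj-alg} and yields the stronger, more explicit conclusion that $\widetilde{\Lambda}_X$ factors through a finite direct sum of copies of $\upalpha$, which is closer in spirit to the Hopf-algebra statement that $\Lambda x$ spans a sum of one-dimensional submodules.
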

\begin{proof}
  We use the fact that an object $T \in \mathcal{C}$ is trivial if and only if $\uprho^{\ell}_T = \upepsilon_{\unitobj} \otimes \id_T$ \cite[Proposition 5.2]{2015arXiv150401178S}. Let $\xi$ and $\zeta$ be the canonical half-braiding for $\mathsf{Z}(\unitobj)$ and $\mathsf{Z}(\upalpha)$, respectively. By the definition of the tensor product of the Drinfeld center, the half-braiding for $\mathsf{Z}(\unitobj) \otimes \mathsf{Z}(\upalpha)$ is given by
  \begin{equation*}
    \sigma_X = (\xi_{X} \otimes \id_{\mathsf{Z}(\upalpha)}) \circ (\id_{\mathcal{Z}(\unitobj)} \otimes \zeta_X)
  \end{equation*}
  for $X \in \mathcal{C}$. By Lemma \ref{lem:Hom-V-ZW-br}, we have $(\uprho_X^{\ell} \otimes \id_{\upalpha}) \circ (\id_{\mathsf{A}} \otimes \widetilde{\Lambda}_X)$
  \begin{align*}
    & = (((\id_X \otimes \upepsilon_{\unitobj}) \circ \xi_{X}) \otimes \id_{\upalpha})
    \circ (\id_{\mathsf{A}} \otimes ((\id_{X} \otimes \upepsilon_{\upalpha}) \circ \zeta_X \circ (\Lambda \otimes \id_X))) \\
    & = (\id_X \otimes \upepsilon_{\unitobj} \otimes \upepsilon_{\upalpha})
      \circ (\xi_{X} \otimes \id_{\mathsf{Z}(\upalpha)})
      \circ (\id_{\mathcal{Z}(\unitobj)} \otimes \zeta_X)
      \circ (\id_{\mathsf{Z}(\unitobj)} \otimes \Lambda \otimes \id_X) \\
    & = (\id_X \otimes \upepsilon_{\unitobj} \otimes \upepsilon_{\upalpha})
      \circ \sigma_{X} \circ (\id_{\mathsf{Z}(\unitobj)} \otimes \Lambda \otimes \id_X)
  \end{align*}
  for all $X \in \mathcal{C}$. We now note that the equation $\upepsilon_{\unitobj} \otimes \upepsilon_{\upalpha} = \upepsilon_{\upalpha} \circ \mathsf{m}_{\unitobj, \upalpha}$ holds since $\upepsilon$ is monoidal. We also note that $\mathsf{m}_{\unitobj,\upalpha}$ is in fact a morphism in $\mathcal{Z}(\mathcal{C})$. Thus we continue the above computation as follows:
  \begin{align*}
    (\uprho_X^{\ell} \otimes \id_{\upalpha}) \circ (\id_{\mathsf{A}} \otimes \widetilde{\Lambda}_X)
    & = (\id_X \otimes \upepsilon_{\unitobj} \otimes \upepsilon_{\upalpha})
      \circ \sigma_{X} \circ (\id_{\mathsf{Z}(\unitobj)} \otimes \Lambda \otimes \id_X) \\
    & = (\id_X \otimes \upepsilon_{\upalpha})
      \circ (\id_X \otimes \mathsf{m}_{\unitobj, \upalpha})
      \circ \sigma_{X} \circ (\id_{\mathsf{Z}(\unitobj)} \otimes \Lambda \otimes \id_X) \\
    & = (\id_X \otimes \upepsilon_{\upalpha})
      \circ \zeta_X
      \circ (\id_X \otimes \mathsf{m}_{\unitobj, \upalpha})
      \circ (\id_{\mathsf{Z}(\unitobj)} \otimes \Lambda \otimes \id_X).
  \end{align*}
  By the definition of a categorical left integral, we finally obtain
  \begin{equation*}
    (\uprho_X^{\ell} \otimes \id_{\upalpha}) \circ (\id_{\mathsf{A}} \otimes \widetilde{\Lambda}_X)
    = (\id_X \otimes \upepsilon_{\upalpha})
    \circ \zeta_X
    \circ (\upepsilon_{\unitobj}\otimes \Lambda \otimes \id_X)
    = \upepsilon_{\unitobj} \otimes \widetilde{\Lambda}_X.
  \end{equation*}
  Now let $i: I(X) \to X$ be the inclusion morphism. In view of the epi-mono factorization of the form~\eqref{eq:integ-epi-mono}, we have
  \begin{equation*}
    (\uprho_X^{\ell} \otimes \id_{\upalpha}) \circ (\id_{\mathsf{A}} \otimes i \otimes \id_{\upalpha})
    = \upepsilon_{\unitobj} \otimes i \otimes \id_{\upalpha}.
  \end{equation*}
  The left-hand side is equal to $(i \otimes \id_{\upalpha}) \circ (\uprho_{I(X)}^{\ell} \otimes \id_{\upalpha})$ by the naturality of the canonical left action. Since $i$ is monic, and since $\upalpha$ is invertible, we have
  \begin{equation*}
    \uprho_{I(X)}^{\ell} = \varepsilon_{\unitobj} \otimes \id_{I(X)}.
  \end{equation*}
  Hence $I(X)$ is trivial by the fact mentioned at the beginning of the proof.
\end{proof}

Thus $I(X)$ vanishes unless $X$ has a trivial subobject. We are especially interested in the case where $X \in \mathcal{C}$ is injective (or, equivalently, projective). By using the above lemma, we prove:

\begin{lemma}
  \label{lem:integ-inj-full-1}
  {\rm (a)} Let $E \in \mathcal{C}$ be an indecomposable injective object. Then,
  \begin{equation*}
    I(E) \ne 0 \iff \text{$E$ is an injective hull of the unit object $\unitobj$}.
  \end{equation*}
  {\rm (b)} If $i: \unitobj \hookrightarrow E$ is an injective hull of $\unitobj$, then
  \begin{equation*}
    \widetilde{\Lambda}_E = (i \otimes \id_{\upalpha}) \circ p
  \end{equation*}
  for some epimorphism $p: E \twoheadrightarrow \upalpha$ in $\mathcal{C}$.
\end{lemma}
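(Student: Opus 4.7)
The plan is to treat both directions of (a) and then derive part (b), using the epi-mono factorization \eqref{eq:integ-epi-mono}, Lemma~\ref{lem:integ-functor-triv}, and the structure of indecomposable injectives in a finite abelian category.

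I first handle the ``only if'' direction of (a). If $I(E) \ne 0$, Lemma~\ref{lem:integ-functor-triv} says $I(E)$ is trivial, so $\unitobj$ is a direct summand of $I(E)$, yielding a monomorphism $\unitobj \hookrightarrow I(E) \hookrightarrow E$. As $E$ is an indecomposable injective object of a finite abelian category, its socle is simple, and therefore $\mathrm{soc}(E) \cong \unitobj$; equivalently, $E$ is an injective hull of $\unitobj$.

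For the ``if'' direction, I must propagate non-vanishing of $\widetilde{\Lambda}$ from some unknown object to the specific $E$. Since $\Lambda \ne 0$ and $\mathsf{Z}(\upalpha) = \int_{X \in \mathcal{C}} X \otimes \upalpha \otimes X^*$ is an end, the universal property forces $\uppi_{\upalpha; X_0} \circ \Lambda \ne 0$ for some $X_0 \in \mathcal{C}$, and the isomorphism of Lemma~\ref{lem:Hom-V-ZW} translates this to $\widetilde{\Lambda}_{X_0} \ne 0$. Embed $X_0$ into an injective hull $E_0 = \bigoplus_{j} E_{j}$, a finite direct sum of indecomposable injectives. By naturality of $\widetilde{\Lambda}$ and exactness of $-\otimes \upalpha$, the inclusion $\iota\colon X_0 \hookrightarrow E_0$ gives $\widetilde{\Lambda}_{E_0} \circ \iota = (\iota \otimes \id_{\upalpha}) \circ \widetilde{\Lambda}_{X_0} \ne 0$. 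Naturality applied to the summand inclusions and projections of $E_0 = \bigoplus_j E_j$ shows that $\widetilde{\Lambda}_{E_0}$ is block-diagonal with blocks $\widetilde{\Lambda}_{E_j}$, so $\widetilde{\Lambda}_{E_j} \ne 0$ for some $j$. By the ``only if'' direction just proved, $E_j$ is an injective hull of $\unitobj$, hence isomorphic to $E$. Thus $\widetilde{\Lambda}_E \ne 0$, equivalently $I(E) \ne 0$ by \eqref{eq:integ-IX-zero}.

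For (b), with $E$ the injective hull of $\unitobj$, part (a) gives $I(E) \ne 0$. Since $I(E)$ is a trivial subobject of $E$, it is semisimple, hence contained in $\mathrm{soc}(E) \cong \unitobj$; being a non-zero subobject of a simple, it must equal $\mathrm{soc}(E)$ itself, so $I(E) \cong \unitobj$. In the canonical epi-mono factorization \eqref{eq:integ-epi-mono} of $\widetilde{\Lambda}_E$, the mono $I(E) \otimes \upalpha \hookrightarrow E \otimes \upalpha$ then becomes (up to a non-zero scalar, which we absorb into the epi factor) $i \otimes \id_{\upalpha}$, yielding $\widetilde{\Lambda}_E = (i \otimes \id_{\upalpha}) \circ p$ for some epimorphism $p \colon E \twoheadrightarrow \upalpha$. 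The main obstacle is the propagation of non-vanishing in the converse of (a); the rest is a routine read-off of \eqref{eq:integ-epi-mono} combined with the socle description.
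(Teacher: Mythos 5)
Your proof is correct and follows essentially the same route as the paper: triviality of $I(E)$ plus the simple socle of an indecomposable injective gives the ``only if'' direction, and nonvanishing of $\Lambda$ is propagated through an injective embedding and the Krull--Schmidt decomposition to locate the hull of $\unitobj$, which is exactly the paper's ``injective cogenerator'' argument run in contrapositive form. Part (b) likewise matches the paper's identification $I(E)=\mathrm{Im}(i)$ via the uniqueness of the non-zero trivial subobject and the epi-mono factorization \eqref{eq:integ-epi-mono}.
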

\begin{proof}
  {\rm (a)} Let $V_1, \dotsc, V_n$ be the representatives of isomorphism classes of simple objects of $\mathcal{C}$ with $V_1 = \unitobj$, and let $i: V_s \hookrightarrow E_s$ be an injective hull of $V_s$. Then $\{ E_1, \dotsc E_n \}$ is a complete set of representatives of isomorphism classes of indecomposable injective objects of $\mathcal{C}$. If $I(E_s) = 0$ for all $s$, then, by~\eqref{eq:integ-IX-zero}, we have $\widetilde{\Lambda}_{E_s} = 0$ for all $s$. Since $E = E_1 \oplus \dotsb \oplus E_n$ is an injective cogenerator, we have $\widetilde{\Lambda}_X = 0$ for all objects $X \in \mathcal{C}$. This contradicts to the assumption $\Lambda \ne 0$. Thus $I(E_s) \ne 0$ for some $s \in \{ 1, \dotsc, n \}$. On the other hand, since $V_s$ is a unique simple subobject of $E_s$, we have $I(E_s) = 0$ for all $s = 2, \dotsc, n$. Therefore $I(E_1) \ne 0$.

  {\rm (b)} Let $i: \unitobj \hookrightarrow E$ be an injective hull. Since $I(E) \ne 0$ by (a), and since $\mathrm{Im}(i)$ is the unique non-zero trivial subobject of $E$, we have $I(E) = \mathrm{Im}(i) \cong \unitobj$. Thus we have an epi-mono factorization of the form
  \begin{equation*}
    \widetilde{\Lambda}_E
    = \left( E \xrightarrow{\quad p \quad} \upalpha \xrightarrow{\quad i \otimes \id_{\upalpha} \quad} E \otimes \upalpha \right)
  \end{equation*}
  in view of \eqref{eq:integ-epi-mono}. The proof is done.
\end{proof}

The following lemma is a key observation to give categorical analogues of Lorenz's results mentioned at the beginning of this section.

\begin{lemma}
  \label{lem:Hom-1-IX}
  Let $i: \unitobj \hookrightarrow E$ be an injective hull of $\unitobj$. Then
  \begin{equation}
    \Hom_{\mathcal{C}}(\unitobj, I(X)) = \{ f \circ i \mid f \in \Hom_{\mathcal{C}}(E, X) \}.
  \end{equation}
\end{lemma}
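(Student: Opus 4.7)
For the inclusion ``$\supseteq$'', I apply the naturality of the monomorphism $\iota: I(-) \hookrightarrow \id_{\mathcal{C}}$ to a morphism $f: E \to X$ and combine it with Lemma~\ref{lem:integ-inj-full-1}(b), which identifies $\iota_E: I(E) \hookrightarrow E$ with $i: \unitobj \hookrightarrow E$ up to an isomorphism $\tilde{\imath}: \unitobj \xrightarrow{\sim} I(E)$. The commutative square $\iota_X \circ I(f) = f \circ \iota_E$ then yields $f \circ i = \iota_X \circ (I(f) \circ \tilde{\imath})$, exhibiting $f \circ i$ as an element of $\Hom_{\mathcal{C}}(\unitobj, I(X))$ (identified with a subspace of $\Hom_{\mathcal{C}}(\unitobj, X)$ via the monomorphism $\iota_X$).

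The substantial direction is ``$\subseteq$''. Given $g: \unitobj \to I(X)$, I will construct $f \in \Hom_{\mathcal{C}}(E, X)$ satisfying $f \circ i = \iota_X \circ g$. My approach exploits the projectivity of $E$, automatic since $E$ is an indecomposable injective in the Frobenius category $\mathcal{C}$. Consider the epimorphisms $p: E \twoheadrightarrow \upalpha$ from Lemma~\ref{lem:integ-inj-full-1}(b) and $q_X: X \twoheadrightarrow I(X) \otimes \upalpha$ coming from the epi-mono factorization $\widetilde{\Lambda}_X = (\iota_X \otimes \id_{\upalpha}) \circ q_X$. Using projectivity of $E$ and the epi $q_X$, one lifts $(g \otimes \id_{\upalpha}) \circ p: E \to I(X) \otimes \upalpha$ to a morphism $\tilde f: E \to X$ with $q_X \circ \tilde f = (g \otimes \id_{\upalpha}) \circ p$. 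A computation via naturality of $\widetilde{\Lambda}$, together with the identity $p \circ i = \widetilde{\Lambda}_{\unitobj} = \upepsilon_{\upalpha} \circ \Lambda$ (obtained by applying naturality to $i$ and cancelling the monic $i \otimes \id_{\upalpha}$), shows that $q_X \circ (\tilde f \circ i) = q_X \circ (\iota_X \circ g)$; hence the discrepancy $\tilde f \circ i - \iota_X \circ g$ factors through $\ker(q_X) = \ker(\widetilde{\Lambda}_X)$.

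The main obstacle is to kill this residual discrepancy in the non-semisimple case. By the Maschke theorem (Theorem~\ref{thm::Maschke}), $\upepsilon_{\upalpha} \circ \Lambda = 0$ there, so every morphism $\unitobj \to X$ automatically factors through $\ker(\widetilde{\Lambda}_X)$ and the naturality equation above becomes trivially satisfied, giving no control on $\tilde f \circ i$. To complete the argument I expect one must correct $\tilde f$ by a morphism in $\Hom_{\mathcal{C}}(E/\unitobj, X) = \ker(i^*)$ obtained from the integral structure of $\Lambda$ --- for instance by using the isomorphism $\mathsf{Z}(\upalpha) \cong {}^*\mathsf{A}$ of left $\mathsf{A}$-modules together with the equivalence ${}_{\mathsf{A}}\mathcal{C} \simeq \mathcal{C} \boxtimes \mathcal{C}$, which together allow one to rephrase both sides of the desired equation within a common Hom space in $\mathcal{C} \boxtimes \mathcal{C}$ where cancellation is tractable. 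Alternatively, it would suffice to establish the dimension identity $\dim\Hom(E, X) = \dim\Hom(E/\unitobj, X) + \dim\Hom(\unitobj, I(X))$, since combined with the $\supseteq$ direction and the long exact Ext-sequence attached to $0 \to \unitobj \to E \to E/\unitobj \to 0$ this forces the map $f \mapsto f \circ i$ to carry $\Hom(E, X)$ onto $\Hom(\unitobj, I(X))$.
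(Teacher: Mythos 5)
Your ``$\supseteq$'' argument is fine (and arguably cleaner than the paper's, which instead observes that $\mathrm{Im}(f \circ i) = \mathrm{Im}(f \circ i \circ p) \subseteq I(X)$ for an epimorphism $p$). The problem is the ``$\subseteq$'' direction, which is the substantive half of the lemma and which your proposal does not actually prove. Your lift $\tilde{f}$ through the epimorphism $q_X$ only determines $\tilde{f} \circ i$ modulo $\ker(q_X) = \ker(\widetilde{\Lambda}_X)$, and, as you yourself observe, in the non-semisimple case $\upepsilon_{\upalpha} \circ \Lambda = 0$ by Theorem~\ref{thm::Maschke}, so the equation $q_X \circ (\tilde{f} \circ i) = q_X \circ (\iota_X \circ g)$ degenerates to $0 = 0$ and carries no information. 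The final paragraph of your proposal is an honest list of things that \emph{might} close the gap (``I expect one must correct $\tilde f$ by \dots'', ``it would suffice to establish the dimension identity \dots''), but none of them is carried out; in particular the dimension identity you invoke is essentially equivalent to the statement being proved and is not established.

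The missing idea is to exploit the triviality of $I(X)$ (Lemma~\ref{lem:integ-functor-triv}) at the level of Hom spaces rather than trying to cancel a non-monic epimorphism. The paper's route: the morphism $p := \coev_{\upalpha}^{-1} \circ (q \otimes \id_{\upalpha^*}) : E \otimes \upalpha^* \to \unitobj$ is a projective cover of $\unitobj$, and because $I(X)$ is a finite direct sum of copies of $\unitobj$, precomposition with $p$ gives a \emph{bijection} $\Hom_{\mathcal{C}}(\unitobj, I(X)) \to \Hom_{\mathcal{C}}(E \otimes \upalpha^*, I(X))$. Projectivity of $E \otimes \upalpha^*$ makes $\Hom_{\mathcal{C}}(E \otimes \upalpha^*, X \otimes \upalpha^*) \to \Hom_{\mathcal{C}}(E \otimes \upalpha^*, I(X))$ surjective, every element of the source is $f \otimes \id_{\upalpha^*}$ for some $f: E \to X$, and a naturality computation (the analogue of your $p \circ i = \widetilde{\Lambda}_{\unitobj}$ step) sends $f \otimes \id_{\upalpha^*}$ to $f \circ i \circ p$. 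Hence $\Hom_{\mathcal{C}}(E \otimes \upalpha^*, I(X)) = \{ f \circ i \circ p \}$, and the injectivity of precomposition with $p$ — not the non-existent injectivity of $q_X$ — is what lets you strip off $p$ and conclude $\Hom_{\mathcal{C}}(\unitobj, I(X)) = \{ f \circ i \}$. Without this step (or a genuine proof of your dimension identity), the proposal does not establish the lemma.
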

\begin{proof}
  Let $q: E \twoheadrightarrow \upalpha$ be the epimorphism such that $\widetilde{\Lambda}_E = (i \otimes \id_{\upalpha}) \circ q$. Then
  \begin{equation*}
    p := \coev_{\upalpha}^{-1} \circ (q \otimes \upalpha^*): E \otimes \upalpha^* \to \unitobj
  \end{equation*}
  is a projective cover of $\unitobj$. Since $I(X)$ is a trivial object, the map
  \begin{equation}
    \label{lem:Hom-1-IX-pf-1}
    \Hom_{\mathcal{C}}(\unitobj, I(X)) \to \Hom_{\mathcal{C}}(E \otimes \upalpha^*, I(X)),
    \quad x \mapsto x \circ p
  \end{equation}
  is bijective. Let $\widetilde{\Lambda}'_X: X \otimes \upalpha^* \to I(X)$ be the morphism obtained by restricting the codomain of \eqref{eq:integ-functor-def} to its image. Since $E \otimes \upalpha^*$ is projective, the map
  \begin{equation}
    \label{lem:Hom-1-IX-pf-2}
    \Hom_{\mathcal{C}}(E \otimes \upalpha^*, X \otimes \upalpha^*) \to \Hom_{\mathcal{C}}(E \otimes \upalpha^*, I(X)),
    \quad g \mapsto \widetilde{\Lambda}'_X \circ g
  \end{equation}
  induced by~\eqref{eq:integ-functor-def} is surjective. Every element of the set $\Hom_{\mathcal{C}}(E \otimes \upalpha^*, X \otimes \upalpha^*)$ is of the form $f \otimes \upalpha^*$ for some $f: E \to X$. Such a morphism is sent by \eqref{lem:Hom-1-IX-pf-2} to:
  \begin{align*}
    f \otimes \upalpha^*
    & \mapsto (\id_X \otimes \coev_{\upalpha}^{-1}) \circ (\widetilde{\Lambda}_X \otimes \upalpha^*) \circ (f \otimes \upalpha^*) \\
    & = (\id_X \otimes \coev_{\upalpha}^{-1}) \circ (f \otimes \upalpha \otimes \upalpha^*) \circ (\widetilde{\Lambda}_{E} \otimes \upalpha^*)
      \quad \text{(by the naturality of $\widetilde{\Lambda}$)} \\
    & = (\id_X \otimes \coev_{\upalpha}^{-1}) \circ (f \otimes \upalpha \otimes \upalpha^*)
      \circ (i \otimes \upalpha \otimes \upalpha^*)
      \circ (q \otimes \upalpha^*) \\
    & = f \circ i \circ p.
  \end{align*}
  Since the map~\eqref{lem:Hom-1-IX-pf-2} is surjective, we have
  \begin{equation}
    \label{lem:Hom-1-IX-pf-3}
    \Hom_{\mathcal{C}}(E \otimes \upalpha^*, I(X)) = \{ f \circ i \circ p \mid f: E \to X \}.
  \end{equation}
  Now let $f: E \to X$ be a morphism in $\mathcal{C}$. Since $p$ is an epimorphism,
  \begin{equation*}
    \mathrm{Im}(f \circ i) = \mathrm{Im}(f \circ i \circ p) \subset I(X).
  \end{equation*}
  Namely, the morphism $f \circ i$ for $f \in \Hom_{\mathcal{C}}(E, X)$ belongs to the source of the bijection \eqref{lem:Hom-1-IX-pf-1}. Comparing~\eqref{lem:Hom-1-IX-pf-1} with~\eqref{lem:Hom-1-IX-pf-3}, we conclude that the map
  \begin{equation*}
    \Hom_{\mathcal{C}}(E, X) \to \Hom_{\mathcal{C}}(\unitobj, I(X)),
    \quad f \mapsto f \circ i
  \end{equation*}
  is a well-defined surjective map. The proof is done.
\end{proof}

\subsection{Morphisms factoring through projectives}

For two objects $V$ and $W$ of an abelian category $\mathcal{A}$, we define
\begin{equation*}
  \Hom_{\mathcal{A}}^{\mathrm{pr}}(V, W) = \{ f \in \Hom_{\mathcal{A}}(V, W) \mid \text{$f$ factors through a projective object} \}.
\end{equation*}
We give a relation between the set $\Hom_{\mathcal{C}}^{\mathrm{pr}}(V, W)$ for $V, W \in \mathcal{C}$ and the integration functor. As is well-known, there is an isomorphism
\begin{equation}
  \label{eq:duality-iso}
  \Hom_{\mathcal{C}}(\unitobj, W \otimes V^*) \to \Hom_{\mathcal{C}}(V, W),
  \quad t \mapsto (\id_W \otimes \eval_V) \circ (t \otimes \id_V).
\end{equation}
As a categorical analogue of \cite[Lemma 3.1]{MR1961567}, we prove:

\begin{theorem}
  \label{thm:projmorph}
  The isomorphism~\eqref{eq:duality-iso} restricts to the isomorphism
  \begin{equation*}
    \Hom_{\mathcal{C}}(\unitobj, I(W \otimes V^*)) \cong \Hom_{\mathcal{C}}^{\proj}(V, W).
  \end{equation*}
\end{theorem}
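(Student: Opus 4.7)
The plan is to apply Lemma~\ref{lem:Hom-1-IX} with $X = W \otimes V^*$, which characterizes $\Hom_{\mathcal{C}}(\unitobj, I(W \otimes V^*))$ as the set of morphisms of the form $f \circ i$ with $f: E \to W \otimes V^*$, where $i: \unitobj \hookrightarrow E$ is an injective hull of $\unitobj$. Since $\mathcal{C}$ is Frobenius, $E$ is simultaneously injective and projective; the projectivity of $E$ will drive the forward inclusion, while the injectivity of objects of the form $P \otimes V^*$ (for $P$ projective) will drive the reverse inclusion via the standard extension property along~$i$.

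For the forward inclusion, take $t \in \Hom_{\mathcal{C}}(\unitobj, I(W \otimes V^*))$ and write $t = f \circ i$ as above. Applying the isomorphism \eqref{eq:duality-iso}, one computes
\begin{equation*}
  (\id_W \otimes \eval_V) \circ (t \otimes \id_V)
  = \widehat{f} \circ (i \otimes \id_V),
\end{equation*}
where $\widehat{f} := (\id_W \otimes \eval_V) \circ (f \otimes \id_V): E \otimes V \to W$. Since $E$ is projective, so is $E \otimes V$, and this exhibits the image of $t$ under \eqref{eq:duality-iso} as a morphism factoring through a projective object.

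For the reverse inclusion, take $g \in \Hom_{\mathcal{C}}^{\proj}(V, W)$, say $g = h \circ k$ with $P$ projective, $k: V \to P$, and $h: P \to W$. Let $\tilde{g}: \unitobj \to W \otimes V^*$ be the preimage of $g$ under \eqref{eq:duality-iso}. By naturality of the coevaluation,
\begin{equation*}
  \tilde{g} = (g \otimes \id_{V^*}) \circ \coev_V
  = (h \otimes \id_{V^*}) \circ (\id_P \otimes k^*) \circ \coev_P.
\end{equation*}
Now $P \otimes V^*$ is projective (since $P$ is), hence also injective by the Frobenius property of $\mathcal{C}$. Therefore the morphism $(\id_P \otimes k^*) \circ \coev_P: \unitobj \to P \otimes V^*$ extends along the monomorphism $i: \unitobj \hookrightarrow E$ to some $\phi: E \to P \otimes V^*$. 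Setting $f := (h \otimes \id_{V^*}) \circ \phi: E \to W \otimes V^*$ gives $\tilde{g} = f \circ i$, so $\tilde{g} \in \Hom_{\mathcal{C}}(\unitobj, I(W \otimes V^*))$ by Lemma~\ref{lem:Hom-1-IX}.

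The one conceptual obstacle is noticing that, in the reverse direction, extending from $\unitobj$ to $E$ requires the target of the extension to be injective; this is supplied by combining the Frobenius property of $\mathcal{C}$ with the fact that tensoring with $V^*$ preserves projectives. Once this observation is in place, the proof reduces to the short naturality computation above and a direct application of Lemma~\ref{lem:Hom-1-IX}.
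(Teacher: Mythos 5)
Your proof is correct and follows essentially the same route as the paper: both directions hinge on Lemma~\ref{lem:Hom-1-IX} applied to $X = W \otimes V^*$, the forward inclusion factors the corresponding morphism through the projective object $E \otimes V$, and the reverse inclusion uses the Frobenius property to extend along a monomorphism coming from the injective hull $i: \unitobj \hookrightarrow E$. The only (cosmetic) difference is in the reverse direction: the paper extends $f': V \to P$ along $i \otimes \id_V$ using the injectivity of $P$, whereas you first dualize to get a morphism $\unitobj \to P \otimes V^*$ and extend that along $i$ using the injectivity of $P \otimes V^*$ — the two are equivalent via the identity $(k \otimes \id_{V^*}) \circ \coev_V = (\id_P \otimes k^*) \circ \coev_P$ that you invoke.
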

\begin{proof}
  Let $t: \unitobj \to W \otimes V^*$ be a morphism in $\mathcal{C}$, and let $f: V \to W$ be a morphism corresponding to $t$. We first suppose that $t$ belongs to $\Hom_{\mathcal{C}}(\unitobj, I(W \otimes V^*))$. Let $i: \unitobj \hookrightarrow E$ be an injective hull of $\unitobj$. Then, as we have seen,
  \begin{equation*}
    \Hom_{\mathcal{C}}(\unitobj, I(W \otimes V^*))
    = \{ g \circ i \mid g \in \Hom_{\mathcal{C}}(E, W \otimes V^*) \}.
  \end{equation*}
  Thus $t = \widetilde{t} \circ i$ for some $\widetilde{t}: E \to W \otimes V^*$. Now $f$ is equal to the composition
  \begin{equation*}
    V \xrightarrow{\quad i \otimes \id_V \quad}
    E \otimes V
    \xrightarrow{\quad \widetilde{t} \otimes \id_V \quad}
    W \otimes V^* \otimes V
    \xrightarrow{\quad \id_W \otimes \eval_V \quad}
    W.
  \end{equation*}
  In particular, $f$ factors through $E \otimes V$, which is projective.

  Conversely, we suppose that $f$ factors through a projective object, that is, there are a projective object $P \in \mathcal{C}$ and morphisms $f': V \to P$ and $f'': P \to W$ in $\mathcal{C}$ such that $f'' \circ f' = f$. We note that $i \otimes \id_V$ is monic. By the injectivity of $P$, there is a morphism $h: E \otimes V \to P$ such that $h \circ (i \otimes \id_V) = f'$. Thus the morphism $t: \unitobj \to W \otimes V^*$ corresponding to $f$ is given by
  \begin{align*}
    t & = (f \otimes \id_{V^*}) \circ \coev_V \\
      & = (f'' \otimes \id_{V^*}) \circ (h \otimes \id_{V^*}) \circ (i \otimes \id_{V} \otimes \id_{V^*}) \circ \coev_V \\
      & = (f'' \otimes \id_{V^*}) \circ (h \otimes \id_{V^*}) \circ (\id_E \otimes \coev_V) \circ i,
  \end{align*}
  and hence $t \in \Hom_{\mathcal{C}}(\unitobj, I(W \otimes V^*))$. The proof is done.
\end{proof}

\subsection{The rank of the Cartan matrix}

\newcommand{\Hig}{\mathrm{Hig}}
\newcommand{\Car}{\mathrm{Car}}
\newcommand{\projcent}{\mathrm{Cent}^{\mathrm{pr}}}

For a finite-dimensional algebra $A$ over $k$, the {\em projective center} of $A$ is defined to be the set of all $A$-bimodules endomorphisms on ${}_A A_A$ factoring through a projective $A$-bimodule. Motivated by this notion, we introduce the following terminology:

\begin{definition}
  The {\em projective center} of a finite abelian category $\mathcal{A}$ is
  \begin{equation*}
    \projcent(\mathcal{A}) := \Hom_{\REX(\mathcal{A})}^{\mathrm{pr}}(\id_{\mathcal{A}}, \id_{\mathcal{A}}),
  \end{equation*}
  where $\REX(\mathcal{A}) := \REX(\mathcal{A}, \mathcal{A})$ is the category of $k$-linear right exact endofunctors on $\mathcal{A}$ (which is a finite abelian category by Lemma~\ref{lem:EW-calc}).
\end{definition}

For the adjoint algebra $\mathsf{A} = \mathsf{Z}(\unitobj)$, there is an isomorphism
\begin{equation}
  \label{eq:adj-alg-end-id}
  \Hom_{\mathcal{C}}(\unitobj, \mathsf{A}) \cong \Nat(\id_{\mathcal{C}}, \id_{\mathcal{C}}) = \Hom_{\REX(\mathcal{C})}(\id_{\mathcal{C}}, \id_{\mathcal{C}})
\end{equation}
by Lemma~\ref{lem:Hom-V-ZW}. We remark that $\projcent(\mathcal{C})$ is a subspace of $\Nat(\id_{\mathcal{C}}, \id_{\mathcal{C}})$.

\begin{theorem}
  \label{thm:integ-higman}
  The isomorphism~\eqref{eq:adj-alg-end-id} restricts to the isomorphism
  \begin{equation*}
    \Hom_{\mathcal{C}}(\unitobj, I(\mathsf{A})) \cong \projcent(\mathcal{C}).
  \end{equation*}
\end{theorem}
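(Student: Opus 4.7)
The plan is to mirror the proof of Theorem~\ref{thm:projmorph}, replacing the finite tensor category $\mathcal{C}$ with the abelian category $\REX(\mathcal{C})$ and the projective object $E \otimes V$ with the endofunctor $E \otimes \id_{\mathcal{C}}$. Fix an injective hull $i: \unitobj \hookrightarrow E$ of the unit in $\mathcal{C}$; since $\mathcal{C}$ is Frobenius, $E$ is projective as well. Applying Lemma~\ref{lem:Hom-1-IX} with $X = \mathsf{A} = \mathsf{Z}(\unitobj)$ gives
\[
  \Hom_{\mathcal{C}}(\unitobj, I(\mathsf{A})) = \{ f \circ i \mid f \in \Hom_{\mathcal{C}}(E, \mathsf{A}) \}.
\]
The isomorphism $\Hom_{\mathcal{C}}(V, \mathsf{Z}(\unitobj)) \cong \Nat(V \otimes \id_{\mathcal{C}}, \id_{\mathcal{C}})$ of Lemma~\ref{lem:Hom-V-ZW} is natural in $V$, so under the identification~\eqref{eq:adj-alg-end-id} the subspace $\Hom_{\mathcal{C}}(\unitobj, I(\mathsf{A}))$ corresponds exactly to those natural transformations $\eta: \id_{\mathcal{C}} \to \id_{\mathcal{C}}$ admitting a factorization $\eta = \widetilde{f} \circ (i \otimes \id_{\mathcal{C}})$ for some $\widetilde{f}: E \otimes \id_{\mathcal{C}} \to \id_{\mathcal{C}}$.

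It remains to match this set with $\projcent(\mathcal{C})$. For the inclusion ``$\subseteq$'', any such $\eta$ factors through $E \otimes \id_{\mathcal{C}}$, which is projective in $\REX(\mathcal{C})$ by Corollary~\ref{cor:Rex-proj}. For ``$\supseteq$'', write $\eta = \beta \circ \alpha$ with $\alpha: \id_{\mathcal{C}} \to F$, $\beta: F \to \id_{\mathcal{C}}$ and $F$ projective in $\REX(\mathcal{C})$. The natural transformation $i \otimes \id_{\mathcal{C}}: \id_{\mathcal{C}} \to E \otimes \id_{\mathcal{C}}$ is pointwise monic because the tensor product of $\mathcal{C}$ is exact. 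Provided $F$ is injective in $\REX(\mathcal{C})$, one extends $\alpha$ to $\widetilde{\alpha}: E \otimes \id_{\mathcal{C}} \to F$ with $\widetilde{\alpha} \circ (i \otimes \id_{\mathcal{C}}) = \alpha$, and then $\widetilde{f} := \beta \circ \widetilde{\alpha}$ produces $\eta = \widetilde{f} \circ (i \otimes \id_{\mathcal{C}})$, as required.

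The main obstacle is therefore to verify that every projective object of $\REX(\mathcal{C})$ is injective. I would establish this via the Eilenberg-Watts equivalence $\REX(\mathcal{C}) \approx \mathcal{C}^{\op} \boxtimes \mathcal{C}$ of Lemma~\ref{lem:EW-calc} together with the fact that the Deligne tensor product of two Frobenius finite abelian categories is again Frobenius: indecomposable projectives and indecomposable injectives of $\mathcal{C}_1 \boxtimes \mathcal{C}_2$ are both of the form $X_1 \boxtimes X_2$ with $X_i$ indecomposable projective-injective in $\mathcal{C}_i$. Avoiding this general statement, one may argue in a hands-on manner: by Lemma~\ref{lem:Rex-proj}, $F(X)$ is projective (hence injective, by Frobenius of $\mathcal{C}$) for every $X$; applying $\Nat(-, F)$ to the short exact sequence $0 \to \id_{\mathcal{C}} \to E \otimes \id_{\mathcal{C}} \to (E/\unitobj) \otimes \id_{\mathcal{C}} \to 0$ in $\REX(\mathcal{C})$ then yields surjectivity of $\Nat(E \otimes \id_{\mathcal{C}}, F) \to \Nat(\id_{\mathcal{C}}, F)$, which produces the required lift $\widetilde{\alpha}$.
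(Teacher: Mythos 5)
Your argument is essentially the paper's own proof: both directions reduce to Lemma~\ref{lem:Hom-1-IX} via the naturality of the isomorphism of Lemma~\ref{lem:Hom-V-ZW}, the containment $\subseteq$ uses projectivity of $E \otimes \id_{\mathcal{C}}$ from Corollary~\ref{cor:Rex-proj}, and the containment $\supseteq$ uses injectivity of the projective object to lift along the monomorphism $i \otimes \id_{\mathcal{C}}$; the paper likewise justifies self-injectivity of $\REX(\mathcal{C})$ by the Eilenberg--Watts equivalence of Lemma~\ref{lem:EW-calc}, which is your primary route. One caveat: your ``hands-on'' alternative for self-injectivity does not work as stated, because $\Nat(-,F)$ is an end, i.e.\ a limit, and taking the end of the pointwise surjections $\Hom_{\mathcal{C}}(E \otimes X, F(X)) \to \Hom_{\mathcal{C}}(X, F(X))$ need not yield a surjection; pointwise injectivity of the values $F(X)$ does not imply injectivity of $F$ as an object of $\REX(\mathcal{C})$ (compare: a bimodule injective on one side need not be injective as a bimodule), so you really do need the Frobenius property of $\mathcal{C}^{\op} \boxtimes \mathcal{C}$ itself.
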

\begin{proof}
  We write $\mathcal{E} = \REX(\mathcal{C})$ for simplicity. Let $a: \unitobj \to \mathsf{A}$ be a morphism in $\mathcal{C}$, and let $\widetilde{a} \in \Hom_{\mathcal{E}}(\id_{\mathcal{C}}, \id_{\mathcal{C}})$ be the natural transformation corresponding to $a$. We recall that, by using the left canonical action, $\widetilde{a}$ is given by
  \begin{equation*}
    \widetilde{a}_X = \uprho_X^{\ell} \circ (a \otimes \id_X) \quad (X \in \mathcal{C}).
  \end{equation*}
  We first suppose that $a$ belongs to $\Hom_{\mathcal{C}}(\unitobj, I(\mathsf{A}))$. Let $i: \unitobj \hookrightarrow E$ be an injective hull of $\unitobj$. Then $a = b \circ i$ for some $b: E \to \mathsf{A}$. Thus $\widetilde{a} = a'' \circ a'$ in $\mathcal{E}$, where $a'$ and $a''$ are natural transformations given by
  \begin{equation*}
    a'_X = i \otimes \id_X
    \quad \text{and} \quad
    a''_X = \uprho_X^{\ell} \circ (b \otimes \id_X)
    \quad (X \in \mathcal{C}).
  \end{equation*}
  Namely, the natural transformation $\widetilde{a}$ factors through the functor $E \otimes \id_{\mathcal{C}}$, which is a projective object of $\mathcal{E}$ by Corollary~\ref{cor:Rex-proj}.

  Conversely, we suppose that $\widetilde{a}$ there is a projective object $P \in \mathcal{E}$ and natural transformations $a': \id_{\mathcal{C}} \to P$ and $a'': P \to \id_{\mathcal{C}}$ such that $\widetilde{a} = a'' \circ a'$. Since $\mathcal{E}$ is self-injective by Lemma~\ref{lem:EW-calc}, $P \in \mathcal{E}$ is injective. Now we consider the monomorphism $i \otimes \id: \id_{\mathcal{C}} \to E \otimes \id_{\mathcal{C}}$ in $\mathcal{E}$, where $i: \unitobj \hookrightarrow E$ is the injective hull. By the injectivity of $P$, there is a morphism $\xi: E \otimes \id_{\mathcal{C}} \to P$ in $\mathcal{E}$ such that $a' = \xi \circ (i \otimes \id)$. By Lemma~\ref{lem:Hom-V-ZW}, there is a morphism $b: E \to \mathsf{A}$ such that
  \begin{equation*}
    a''_X \circ \xi_X = (\id_X \otimes \eval_X) \circ (\uppi_{\unitobj; X} \otimes \id_X) \circ (b \otimes \id_X)
  \end{equation*}
  for all $X \in \mathcal{C}$. The right-hand side is equal to $\uprho_X^{\ell}(b \otimes \id_X)$. Thus,
  \begin{equation*}
    \widetilde{a}_X = a''_X \circ a'_X
    = a''_X \circ \xi_X \circ (i \otimes \id_X)
    = \uprho_X^{\ell}(b i \otimes \id_X).
  \end{equation*}
  This means that $a = b i$. Hence $a \in \Hom_{\mathcal{C}}(\unitobj, I(\mathsf{A}))$ by Lemma~\ref{lem:Hom-1-IX}.
\end{proof}

Given a matrix $M$ whose entries are integers, we denote by $\mathrm{rank}_k \, M$ the rank of the matrix $M$ regarded as an matrix over $k$. If $k$ is of characteristic zero, then there is no difference between $\rank_k$ and the usual rank of a matrix. If $k$ is of positive characteristic, then $\mathrm{rank}_k$ may smaller than the usual rank.

Let $V_1, \dotsc, V_n$ be the representatives of isomorphism classes of simple objects of a finite abelian category $\mathcal{A}$, and let $P_s$ be a projective cover of $V_s$. The {\em Cartan matrix}, which we denote by $\Car(\mathcal{A})$, is the $n$-by-$n$ square matrix whose $(i,j)$-entry is the multiplicity of $V_i$ in the composition factors of $P_j$. The following theorem generalizes Lorenz's result \cite[Theorem 3.4]{MR1435369} on the Cartan matrix of the representation category of a finite-dimensional Hopf algebra.

\begin{theorem}
  $\dim_k \Hom_{\mathcal{C}}(\unitobj, I(\mathsf{A})) = \rank_k \Car(\mathcal{C})$.
\end{theorem}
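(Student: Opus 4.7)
The plan is to combine Theorem~\ref{thm:integ-higman} with the classical theorem---proved for finite-dimensional Hopf algebras by Lorenz~\cite[Theorem~3.4]{MR1435369}, and valid for arbitrary finite-dimensional $k$-algebras---that the dimension of the projective center of an algebra equals the $k$-rank of its Cartan matrix. By Theorem~\ref{thm:integ-higman}, the isomorphism~\eqref{eq:adj-alg-end-id} restricts to $\Hom_{\mathcal{C}}(\unitobj, I(\mathsf{A})) \cong \projcent(\mathcal{C})$, reducing the desired identity to the purely abelian-categorical statement
\[
  \dim_k \projcent(\mathcal{C}) = \rank_k \Car(\mathcal{C}).
\]
In particular, neither the tensor structure of $\mathcal{C}$ nor the modular object $\upalpha$ will play any further role; only the underlying finite abelian category matters.

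Next, I would fix a finite-dimensional $k$-algebra $A$ together with a $k$-linear equivalence $\Phi : \mathcal{C} \approx \lmod{A}$, which exists because $\mathcal{C}$ is a finite abelian category. The Eilenberg-Watts theorem---equivalently, Lemma~\ref{lem:EW-calc} applied to $\mathcal{M} = \mathcal{N} = \lmod{A}$---yields an equivalence $\REX(\lmod{A}) \approx \lmod{A^e}$, where $A^e := A \otimes_k A^{\op}$, under which the identity functor corresponds to the regular bimodule ${}_A A_A$ and projective objects correspond to (finite-dimensional) projective $A^e$-modules. Composing with the equivalence $\REX(\mathcal{C}) \approx \REX(\lmod{A})$ induced by $\Phi$ then identifies $\projcent(\mathcal{C})$ with the subspace of $\End_{A^e}(A) = Z(A)$ consisting of bimodule endomorphisms factoring through a projective $A$-bimodule. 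Since $\Phi$ also preserves simple objects and their projective covers, $\Car(\mathcal{C}) = \Car(A)$, so the theorem reduces to the classical formula $\dim_k \projcent(\lmod{A}) = \rank_k \Car(A)$.

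The main technical point in this plan is verifying that the chain of equivalences $\REX(\mathcal{C}) \approx \mathcal{C}^{\op} \boxtimes \mathcal{C} \approx \lmod{A^e}$ respects projective objects---i.e.\ that the projective objects characterized in Lemma~\ref{lem:Rex-proj} correspond precisely to projective $A$-bimodules under the Eilenberg-Watts identification. Because both steps are abelian equivalences, this ultimately reduces to the standard statement that $\lmod{A}^{\op} \boxtimes \lmod{A} \approx \lmod{A^e}$ as abelian categories, a consequence of the universal property of Deligne's product (and the fact that $(\lmod{A})^{\op}$ may be identified with $\lmod{A^{\op}}$ via $k$-linear duality). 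Once this compatibility is in hand, the result follows directly from the classical K\"ulshammer-Lorenz dimension formula.
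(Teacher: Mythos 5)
Your first two steps agree with the paper: the reduction to the identity $\dim_k \projcent(\mathcal{C}) = \rank_k \Car(\mathcal{C})$ via Theorem~\ref{thm:integ-higman}, and the Eilenberg--Watts identification of $\projcent(\mathcal{C})$ with the projective center of an algebra $A$ with $\mathcal{C} \approx \lmod{A}$, are exactly how the proof is meant to go. The gap is in your final step: the formula $\dim_k \projcent(\lmod{A}) = \rank_k \Car(A)$ is \emph{not} valid for arbitrary finite-dimensional $k$-algebras. Take $A$ to be the algebra of upper triangular $2 \times 2$ matrices. Its Cartan matrix is unipotent, so $\rank_k \Car(A) = 2$ in every characteristic. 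On the other hand $Z(A) = k \cdot 1_A$, and if $\id_A$ factored through a projective $A$-bimodule then $A$ would be a retract of a projective $A \otimes_k A^{\op}$-module, hence separable, hence semisimple --- a contradiction; so $\projcent(\lmod{A}) = 0$. The K\"ulshammer--Lorenz circle of results you invoke is a theorem about \emph{symmetric}, or more generally \emph{Frobenius}, algebras, not about arbitrary ones, and cannot be quoted "for arbitrary finite-dimensional $k$-algebras."

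What your argument omits is precisely the one structural input beyond Theorem~\ref{thm:integ-higman} that the paper uses: a finite tensor category is Frobenius (projective $=$ injective), so the algebra $A$ in your reduction may be chosen to be a Frobenius algebra $F$. For a Frobenius algebra the chain closes: by \cite{MR2892923} the projective center of $F$ coincides with its Higman ideal, i.e.\ the image of the map $x \mapsto \sum_i b_i x a_i$ for dual bases $\{a_i\}$, $\{b_i\}$ with respect to the Frobenius form (a map one cannot even write down without that form), and by \cite{MR2863463} the dimension of the Higman ideal equals $\rank_k \Car(F)$. Inserting the self-injectivity of $\mathcal{C}$ and these two Frobenius-algebra results in place of your "classical formula" turns your outline into the paper's proof.
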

\begin{proof}
  Since $\mathcal{C}$ is Frobenius, there is a Frobenius algebra $F$ over $k$ such that $\mathcal{C}$ is equivalent to $\lmod{F}$ as a $k$-linear category. We choose a $k$-basis $a_1, \dotsc, a_m$ of $F$ and then define $b_1, \dotsc, b_m \in F$ by
  \begin{equation*}
    (a_i, b_j) = \delta_{i,j} \quad (i, j = 1, \dotsc, m),
  \end{equation*}
  where $\delta$ is the Kronecker delta and $(,)$ is the Frobenius form of $F$. The image of the linear map $\tau: F \to F$ given by $\tau(x) = \sum_{i = 1}^m b_i x a_i$ is called the Higman ideal of $F$. By \cite{MR2863463} and \cite{MR2892923}, we have
  \begin{equation*}
    \rank_k \Car(\lmod{F})
    = \dim_k (\text{the Higman ideal of $F$})
    = \dim_k \projcent(\lmod{F}).
  \end{equation*}
  In more detail, the first equality follows from the result on the rank of $\tau$ given in \cite{MR2863463}. The second one follows from the fact that the Higman ideal of a Frobenius algebra coincides with its projective center \cite{MR2892923}. We therefore obtain
  \begin{equation*}
    \rank_k \Car(\mathcal{C}) = \dim_k \projcent(\mathcal{C}).
  \end{equation*}
  The claim of this theorem now follows from Theorem~\ref{thm:integ-higman}.
\end{proof}

\section{Indicators of finite tensor categories}
\label{sec:indicator-ftc}

\subsection{Indicators of Hopf algebras}

Let $H$ be a finite-dimensional Hopf algebra over $k$. For a non-negative integer $m$, the $m$-th Sweedler power map $P^{(m)}: H \to H$ is defined as the $m$-th power of $\id_H \in \End_k(H)$ with respect to the convolution product. Namely, for an element $h \in H$, we have
\begin{equation*}
  P^{(0)}(h) = \varepsilon(h) 1_H
  \quad \text{and} \quad
  P^{(m)}(h) = h_{(1)} h_{(2)} \dotsb h_{(m)}
  \quad (\text{if $m > 0$})
\end{equation*}
with the Sweedler notation. Motivated by the result on the Frobenius-Schur indicators of the regular representation of a semisimple Hopf algebra \cite{MR2213320}, Kashina, Montgomery and Ng defined the $n$-th indicator $\nu_n(H)$ of $H$ by
\begin{equation}
  \label{eq:ind-def-orig}
  \nu_n(H) = \mathrm{Trace}(S \circ P^{(n - 1)}: H \to H)
\end{equation}
for a positive integer $n$ \cite{KMN09}. One of the main results of \cite{KMN09} is the gauge invariance of the indicators: If $H$ and $H'$ are two finite-dimensional Hopf algebras over $k$ such that $\lmod{H}$ and $\lmod{H'}$ are equivalent as tensor categories, then $\nu_n(H) = \nu_n(H')$ for all positive integers $n$.

In view of their result, it is natural to attempt to define the $n$-th indicator of a finite tensor category as a generalization of $\nu_n$. It is difficult to understand the right hand side of \eqref{eq:ind-def-orig} in the categorical context. Fortunately, there is a formula of $\nu_n$ written in terms of integrals of Hopf algebras: If $\lambda: H \to k$ is a non-zero left cointegral and $\Lambda \in H$ is the left integral such that $\lambda(\Lambda) = 1$, then we have
\begin{equation}
  \label{eq:ind-formula-integ}
  \nu_n(H) = \lambda(\Lambda_{(1)} \Lambda_{(2)} \dotsb \Lambda_{(n)})
\end{equation}
for all positive integers $n$ \cite[Corollary 2.6]{KMN09}. In this section, we give a categorical interpretation of the expression $\Lambda_{(1)} \dotsb \Lambda_{(n)}$ in terms of a categorical left integral (Subsection~\ref{subsec:sw-pow-integ}). The result gives us the definition of $\nu_n(\mathcal{C})$ for general finite tensor categories $\mathcal{C}$ (Subsection~\ref{subsec:indicator-FTC}).
 
\subsection{Sweedler powers of integrals}
\label{subsec:sw-pow-integ}

Let $\mathcal{C}$ be a finite tensor category over $k$ with the modular object $\upalpha$, and let $\Lambda$ be a non-zero left integral of $\mathcal{C}$. As we have discussed in Subsection~\ref{subsec:integ-FTCs-def}, the natural transformation $\widetilde{\Lambda}_X: X \to X \otimes \upalpha$ corresponding to $\Lambda$ has an epi-mono factorization of the form
\begin{equation*}
  \widetilde{\Lambda}_X = \left( X \mathop{
      \mathrel{\relbar\joinrel\relbar\joinrel\relbar
        \joinrel\relbar\joinrel\twoheadrightarrow}}^{q_X}
    \, I(X) \otimes \upalpha \,
    \mathop{\mathrel{\lhook\joinrel\xrightarrow{\qquad\qquad}}}^{i_X \otimes \upalpha}
    \, X \otimes \upalpha
  \right),
\end{equation*}
where $I: \mathcal{C} \to \mathcal{C}$ is the integration functor of $\mathcal{C}$ (Definition~\ref{def:integ-functor}). The factorization is functorial, meaning that $q_X$ and $i_X$ are natural in the variable $X$.

We recall from Lemma~\ref{lem:integ-functor-triv} that $I(X)$ is trivial in the sense that it is isomorphic to the direct sum of finitely many $\unitobj$'s. Let $\mathcal{T}$ be the full subcategory of $\mathcal{C}$ consisting of all trivial objects. There is a unique natural isomorphism
\begin{equation*}
  \tau_{T,V}: T \otimes V \to V \otimes T
  \quad (T \in \mathcal{T}, V \in \mathcal{V})
\end{equation*}
such that $\tau_{\unitobj, V} = \id_V$ for all $V \in \mathcal{C}$ ({\it cf}. \cite[Lemma 7.1]{MR2381536}). We now define a categorical analogue of the $n$-th Sweedler power of a left integral:

\begin{definition}
  \label{def:sw-pow-integ}
  \newcommand{\myarrow}[1]{\xrightarrow{\makebox[9em]{$\scriptstyle #1$}}}
  For a positive integer $n$, we define $\Lambda^{[n]} \in \Hom_{\mathcal{C}}(\unitobj, \mathsf{Z}(\upalpha))$ to be the element corresponding to the following natural transformation:
  \begin{align}
    \label{eq:integ-Sw-pow-1}
    \widetilde{\Lambda}_{X}^{[n]} := \Big( X
    & \myarrow{\coev^{(n-1)} \, \otimes \, \id_X}
      \underbrace{Y \otimes \dotsb \otimes Y}_{n-1}
      \, \otimes \, \underbrace{X \otimes \dotsb \otimes X}_n \\
    \label{eq:integ-Sw-pow-2}
    & \myarrow{\id \, \otimes \dotsb \otimes \, \id \, \otimes \, q_{X \otimes \dotsb \otimes X}}
      Y \otimes \dotsb \otimes Y
      \, \otimes \, I(X \otimes \dotsb \otimes X) \otimes \upalpha \\
    \label{eq:integ-Sw-pow-3}
    & \myarrow{\tau \, \otimes \, \id_{\upalpha}}
      I(X \otimes \dotsb \otimes X)
      \, \otimes \, Y \otimes \dotsb \otimes Y \otimes \upalpha \\
    \label{eq:integ-Sw-pow-4}
    & \myarrow{i_{X \otimes \dotsb \otimes X} \, \otimes \, \id \, \otimes \dotsb \otimes \, \id}
      X \otimes \dotsb \otimes X
      \, \otimes \, Y \otimes \dotsb \otimes Y \otimes \upalpha \\
    \label{eq:integ-Sw-pow-5}
    & \myarrow{\id_X \otimes \, \eval^{(n-1)} \otimes \id_{\upalpha}}
      X \otimes \upalpha \, \Big) \qquad (X \in \mathcal{C}).
  \end{align}
  Here, $(Y, \mathsf{e}, \mathsf{d})$ is a right dual object of $X$. The morphisms $\eval^{(m)}$ and $\coev^{(m)}$ are defined inductively by $\eval^{(0)} = \coev^{(0)} = \id_{\unitobj}$,
  \begin{equation*}
    \eval^{(m+1)} = \mathsf{e} \circ (\id_X \otimes \eval^{(m)} \otimes \id_Y),
    \quad
    \coev^{(m+1)} = (\id_Y \otimes \coev^{(m)} \otimes \id_X) \circ \mathsf{d}
  \end{equation*}
  for a non-negative integer $m$.
\end{definition}

It is routine to check that the morphism $\widetilde{\Lambda}^{[n]}_X$  does not depend on the choice of a right dual object of $X$ and is natural in the variable $X$. The morphism $\Lambda^{[n]}$ depend on the choice of $\Lambda$. Since a left integral is unique up to scalar multiples, a different choice of a left integral gives a scalar multiple of $\Lambda^{[n]}$.

The morphism $\widetilde{\Lambda}^{[n]}_X$ can be thought of as an analogue of the $n$-th Frobenius-Schur endomorphism introduced in \cite{MR2381536}. Unlike the case considered in \cite{MR2381536}, we cannot consider the `trace' of $\widetilde{\Lambda}_X$ in our general setting.

\subsection{Indicators of finite tensor categories}
\label{subsec:indicator-FTC}

Let $\mathcal{C}$ be a finite tensor category over $k$. We now define the $n$-th indicator of $\mathcal{C}$ by using Definition~\ref{def:sw-pow-integ}.

\begin{definition}
  \label{def:indicator-FTC}
  Let $\lambda$ be a non-zero cointegral of $\mathcal{C}$, and let $\Lambda$ be the left integral of $\mathcal{C}$ such that $\lambda \circ \Lambda = \id_{\unitobj}$ (which exists by Theorem~\ref{thm:integ-exist}). For a positive integer $n$, we define the $n$-th indicator $\nu_n(\mathcal{C}) \in k$ by
  \begin{equation}
    \nu_n(\mathcal{C}) \, \id_{\unitobj} = \lambda \circ \Lambda^{[n]},
  \end{equation}
  where $\Lambda^{[n]}$ is the morphism defined in the above.
\end{definition}

One can check that $\nu_n(\mathcal{C})$ does not depend on the choice of $\lambda$. Moreover, if $\mathcal{C}$ is the representation category of a Hopf algebra, then $\nu_n(\mathcal{C})$ coincides with the $n$-th indicator of Kashina, Montgomery and Ng:

\begin{theorem}
  \label{thm:indicator-FTC-Hopf}
  If $H$ is a finite-dimensional Hopf algebra, then we have
  \begin{equation*}
    \nu_n(\lmod{H}) = \nu_n(H)
    \quad (n = 1, 2, 3, \dotsc).
  \end{equation*}
\end{theorem}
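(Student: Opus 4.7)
The plan is to unwind Definition~\ref{def:sw-pow-integ} in the case $\mathcal{C} = \lmod{H}$ and reduce the identity to the integral formula for $\nu_n(H)$ proved by Kashina, Montgomery, and Ng in \cite[Corollary~2.6]{KMN09}. By Example~\ref{ex:Hopf-integ}, a non-zero categorical cointegral of $\lmod{H}$ is exactly a non-zero left cointegral $\lambda: H \to k$, and the unique categorical left integral $\Lambda$ with $\lambda \circ \Lambda = \id_{\unitobj}$ corresponds to the classical left integral $\Lambda \in H$ normalized by $\lambda(\Lambda) = 1$. Combining the description of the canonical half-braiding in Example~\ref{ex:adj-Hopf-alg} with Lemma~\ref{lem:Hom-V-ZW-br}, the natural transformation $\widetilde{\Lambda}: \id_{\mathcal{C}} \to \id_{\mathcal{C}} \otimes \upalpha$ associated with $\Lambda$ is left multiplication by $\Lambda$, so that $\widetilde{\Lambda}_{X^{\otimes n}}(v_1 \otimes \cdots \otimes v_n) = \Lambda_{(1)} v_1 \otimes \cdots \otimes \Lambda_{(n)} v_n \otimes 1_{\upalpha}$ by the diagonal $H$-action on tensor products.

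The heart of the argument is to show $\widetilde{\Lambda}^{[n]}_X(x) = \Lambda_{(1)} \Lambda_{(2)} \cdots \Lambda_{(n)} \cdot x \otimes 1_{\upalpha}$ for all $X \in \lmod{H}$ and $x \in X$. The factorization $\widetilde{\Lambda}_{X^{\otimes n}} = (i \otimes \id_{\upalpha}) \circ q$ simply encodes that $I(X^{\otimes n})$ is the trivial $H$-submodule $\Lambda \cdot X^{\otimes n} \subset X^{\otimes n}$, and the twist $\tau_{I(X^{\otimes n}), Y^{\otimes(n-1)}}$ reduces to the ordinary vector space flip: since $I(X^{\otimes n})$ is a direct sum of copies of $\unitobj$, the normalization $\tau_{\unitobj, V} = \id_V$ together with the additivity of $\otimes$ forces $\tau$ to act componentwise as the identity. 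Feeding the vector $\sum \eta_{i_{n-1}} \otimes \cdots \otimes \eta_{i_1} \otimes e_{i_1} \otimes \cdots \otimes e_{i_{n-1}} \otimes x$ (in dual bases $\{e_i\}$ of $X$ and $\{\eta_i\}$ of $Y$) through steps~\eqref{eq:integ-Sw-pow-2}--\eqref{eq:integ-Sw-pow-4} should then yield
\[
  \sum \Lambda_{(1)} e_{i_1} \otimes \cdots \otimes \Lambda_{(n-1)} e_{i_{n-1}} \otimes \Lambda_{(n)} x \otimes \eta_{i_{n-1}} \otimes \cdots \otimes \eta_{i_1} \otimes 1_{\upalpha}.
\]

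Step~\eqref{eq:integ-Sw-pow-5} then applies the iterated evaluation $\eval^{(n-1)}$, whose onion-peel structure pairs $\Lambda_{(k+1)} e_{i_{k+1}}$ with $\eta_{i_k}$ for $k = 1, \ldots, n-2$ and $\Lambda_{(n)} x$ with $\eta_{i_{n-1}}$. Telescoping the resulting sum from the innermost index outward via the identity $\sum_i \langle v, \eta_i \rangle e_i = v$ collapses the expression to $\Lambda_{(1)} \Lambda_{(2)} \cdots \Lambda_{(n)} \cdot x \otimes 1_{\upalpha}$. Under the identification $\mathsf{Z}(\upalpha) \cong H$ this says that $\Lambda^{[n]}$ corresponds to $P^{(n)}(\Lambda)$, and hence $\lambda \circ \Lambda^{[n]} = \lambda(\Lambda_{(1)} \cdots \Lambda_{(n)}) \cdot \id_{\unitobj} = \nu_n(H) \cdot \id_{\unitobj}$ by \cite[Corollary~2.6]{KMN09}. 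The main obstacle is the combinatorial bookkeeping required to make the onion-peel telescoping rigorous; the conceptually delicate point---that the intrinsically categorical twist $\tau$ reduces to the ordinary vector space flip---is immediate from its defining normalization and the additivity of $\otimes$.
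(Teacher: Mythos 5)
Your proposal is correct and follows essentially the same route as the paper: identify the categorical (co)integrals of $\lmod{H}$ with the classical ones via Example~\ref{ex:Hopf-integ}, compute $\widetilde{\Lambda}^{[n]}_X$ in dual bases using the diagonal action $\widetilde{\Lambda}_{X^{\otimes n}} = \Lambda_{(1)} \otimes \cdots \otimes \Lambda_{(n)}$ and the nested structure of the iterated (co)evaluations to obtain $\widetilde{\Lambda}^{[n]}_X(x) = \Lambda_{(1)}\cdots\Lambda_{(n)}x$, and conclude by the integral formula $\nu_n(H) = \lambda(\Lambda_{(1)}\cdots\Lambda_{(n)})$ of Kashina--Montgomery--Ng. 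The ``onion-peel'' telescoping you flag as the remaining bookkeeping is exactly the dual-basis computation the paper carries out, and your observation that $\tau$ reduces to the ordinary flip on the trivial object $I(X^{\otimes n})$ is the same (implicit) step used there.
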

\begin{proof}
  We use the same notation for $H$ as in Example~\ref{ex:Hopf-integ}. Let $\lambda$ be a non-zero left cointegral of $H$, and let $\Lambda \in H$ be the left integral such that $\lambda(\Lambda) = 1$. As we have seen in Example~\ref{ex:Hopf-integ}, we may regard $\lambda$ and $\Lambda$ as a categorical cointegral and a categorical left integral of $\lmod{H}$, respectively.

  We compute the morphism $\widetilde{\Lambda}^{[n]}_{X}$ for $X \in \lmod{H}$. Let $x_1, \dotsc, x_n$ be a basis of $X$, and let $x^1, \dotsc, x^n$ be the dual basis. With the Einstein notation,
  \newcommand{\myarrow}[1]{\raisebox{-.23ex}{$\vdash$}\hspace{-.5em}
    \xrightarrow{\makebox[4em]{$\scriptstyle\text{#1}$}}}
  \begin{align*}
    x \,\,
    & \myarrow{\eqref{eq:integ-Sw-pow-1}}
      x^{i_1} \otimes \dotsb \otimes x^{i_{n-1}}
      \otimes x_{i_{n-1}} \otimes \dotsb \otimes x_{i_{1}} \otimes x \\
    & \myarrow{\eqref{eq:integ-Sw-pow-2}}
      x^{i_{n-1}} \otimes \dotsb \otimes x^{i_{1}}
      \otimes \Lambda_{(1)} x_{i_{1}} \otimes \dotsb \otimes \Lambda_{(n-1)} x_{i_{n-1}} \otimes \Lambda_{(n)} x \\
    & \myarrow{\eqref{eq:integ-Sw-pow-3},\eqref{eq:integ-Sw-pow-4}}
      \Lambda_{(1)} x_{i_{1}} \otimes \dotsb \otimes \Lambda_{(n-1)} x_{i_{n-1}} \otimes \Lambda_{(n)} x
      \otimes x^{i_{n-1}} \otimes \dotsb \otimes x^{i_{1}} \\
    & \myarrow{\eqref{eq:integ-Sw-pow-5}}
      \Lambda_{(1)} x_{i_{1}}
      \cdot x^{i_1}(\Lambda_{(2)} x_{i_2})
      \dotsb x^{i_{n-2}}(\Lambda_{(n-1)} x_{i_{n-1}})
      \cdot x^{i_{n-1}}(\Lambda_{(n)} x)
  \end{align*}
  for $x \in X$. Using the identity $x^i(x) \cdot x_i = x$, we get
  \begin{equation*}
    \widetilde{\Lambda}_X^{[n]}(x) = \Lambda_{(1)} \dotsb \Lambda_{(n)} x
  \end{equation*}
  for $x \in X$. Thus $\Lambda^{[n]} \in \Hom_H(k, \mathsf{Z}(\upalpha))$ is the element $\Lambda_{(1)} \dotsb \Lambda_{(n)} \in H$ if we view a linear map $k \to \mathsf{Z}(\alpha)$ as an element of $H$. Thus, by~\eqref{eq:ind-formula-integ}, we have
  \begin{equation*}
    \nu_n(\lmod{H}) = \lambda(\Lambda^{[n]}) = \nu_n(H). \qedhere
  \end{equation*}
\end{proof}

It is routine to check $\nu_n(\mathcal{C}) = \nu_n(\mathcal{D})$ for all positive integers $n$ when $\mathcal{C}$ and $\mathcal{D}$ are equivalent finite tensor categories. Thus we obtain:

\begin{corollary}[Kashina-Montgomery-Ng {\cite{KMN09}}]
  For all positive integer $n$, the $n$-th indicator a finite-dimensional Hopf algebra is a gauge invariant.
\end{corollary}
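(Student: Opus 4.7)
The plan is to derive the corollary as a direct consequence of Theorem~\ref{thm:indicator-FTC-Hopf} together with the ``routine check'' asserted immediately before it: namely, that $\nu_n(\mathcal{C}) = \nu_n(\mathcal{D})$ whenever $\mathcal{C}$ and $\mathcal{D}$ are equivalent as finite tensor categories. Granting that invariance, if $H$ and $H'$ are gauge equivalent finite-dimensional Hopf algebras then $\lmod{H}$ and $\lmod{H'}$ are equivalent as tensor categories, and two applications of Theorem~\ref{thm:indicator-FTC-Hopf} yield
\[
\nu_n(H) \;=\; \nu_n(\lmod{H}) \;=\; \nu_n(\lmod{H'}) \;=\; \nu_n(H').
\]

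To establish the routine check, I would proceed as follows. Let $F: \mathcal{C} \to \mathcal{D}$ be an equivalence of finite tensor categories. Then $F$ induces an equivalence $\mathcal{Z}(F): \mathcal{Z}(\mathcal{C}) \to \mathcal{Z}(\mathcal{D})$ of braided tensor categories commuting with the forgetful functors $\mathsf{U}$, and hence intertwining their right adjoints $\mathsf{R}$. Consequently there is a natural isomorphism $F \circ \mathsf{Z}_{\mathcal{C}} \cong \mathsf{Z}_{\mathcal{D}} \circ F$ that respects the structure morphisms $\updelta$, $\upepsilon$, $\mathsf{m}$, and $\mathsf{u}$ of both central Hopf comonads. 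By the characterization of the modular object in Lemma~\ref{lem:alpha-def}, this forces $F(\upalpha_{\mathcal{C}}) \cong \upalpha_{\mathcal{D}}$. It follows that $F$ sends a non-zero categorical cointegral $\lambda$ of $\mathcal{C}$ to one of $\mathcal{D}$, and similarly sends the left integral $\Lambda$ normalized by $\lambda \circ \Lambda = \id_{\unitobj}$ to the correspondingly normalized left integral of $\mathcal{D}$.

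It remains to check that the $n$-th Sweedler power $\Lambda^{[n]}$ of Definition~\ref{def:sw-pow-integ} is preserved by $F$. The construction of $\widetilde{\Lambda}^{[n]}_X$ depends only on the dualities of $\mathcal{C}$, the comonad structure of $\mathsf{Z}_{\mathcal{C}}$, the integration functor $I$, the canonical trivial half-braiding $\tau_{T,V}$, and the epi--mono factorization of $\widetilde{\Lambda}_X$; each of these is transported by the tensor equivalence $F$. Therefore $F(\Lambda^{[n]})$ agrees with $F(\Lambda)^{[n]}$ under the identifications above, and applying $F(\lambda)$ yields $\nu_n(\mathcal{C}) = \nu_n(\mathcal{D})$.

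The main obstacle will be the bookkeeping in verifying that $F \circ \mathsf{Z}_{\mathcal{C}} \cong \mathsf{Z}_{\mathcal{D}} \circ F$ as Hopf comonads. Concretely, one must show that applying $F$ to the universal dinatural transformation $\uppi^{\mathcal{C}}_{V;X}: \mathsf{Z}_{\mathcal{C}}(V) \to X \otimes V \otimes X^*$ produces, after inserting the tensor structure isomorphisms of $F$ and the canonical comparison $F(X)^* \cong F(X^*)$, a universal dinatural transformation for $\mathsf{Z}_{\mathcal{D}}(F(V))$, and that all four comonad/monoidal structure morphisms transport correctly through the resulting isomorphism. This verification is entirely formal but notationally heavy, which is presumably why the paper defers it as ``routine.''
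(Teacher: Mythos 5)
Your proposal follows the paper's own route exactly: the corollary is deduced from Theorem~\ref{thm:indicator-FTC-Hopf} together with the invariance of $\nu_n(\mathcal{C})$ under tensor equivalence, which the paper itself only asserts as ``routine to check.'' Your elaboration of that routine check (transporting the central Hopf comonad, the modular object, the normalized (co)integrals, and the Sweedler power through the induced equivalence of Drinfeld centers) is a sound filling-in of the detail the paper omits.
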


\bibliographystyle{alpha}
\def\cprime{$'$}

\end{document}